\newcommand{\bu}{\boldsymbol u}
\newcommand{\Om}{\Omega}
\newcommand{\bv}{\boldsymbol v}
\newcommand{\bV}{\boldsymbol V}
\newcommand{\bw}{\boldsymbol w}
\newcommand{\bbeta}{\boldsymbol  \eta}
\newcommand{\bzeta}{\boldsymbol \zeta}
\newcommand{\btau}{\boldsymbol \tau}
\newcommand{\be}{\boldsymbol e}
\newcommand{\bvar}{\boldsymbol \varphi}
\newcommand{\bs}{\boldsymbol s}
\newcommand{\bff}{\boldsymbol f}
\newtheorem{Theorem}{Theorem}[section]
\newtheorem{remark}[Theorem]{Remark}
\newtheorem{Proof}{{\em Proof:}}
\newenvironment{proof}{\begin{Proof}\rm}{\hfill $\Box$ \end{Proof}}
\title{Error analysis of proper orthogonal decomposition stabilized methods for incompressible flows}
\author{  Julia Novo\thanks{Departamento de
Matem\'aticas, Universidad Aut\'onoma de Madrid, Spain.  Research is supported
by Spanish MINECO
under grant MTM2016-78995-P (AEI/FEDER, UE) cofinanced by FEDER funds (julia.novo@uam.es).}
\and Samuele Rubino\thanks{Department EDAN \& IMUS, Universidad de Sevilla, Spain. Research is supported by
Spanish MCINYU under grant RTI2018-093521-B-C31 and Spanish State Research Agency
through the national programme Juan de la Cierva-Incorporaci\'on 2017 (samuele@us.es).}}
\begin{document}

\maketitle

\begin{abstract}
 Proper orthogonal decomposition (POD) stabilized methods for the Navier-Stokes equations are considered and analyzed.
 We consider two cases, the case in which the snapshots are based on a non inf-sup stable method and the case in which the
 snapshots are based on an inf-sup stable method. For both cases we construct approximations to the velocity and the pressure.
 For the first case, we analyze a method in which the snapshots are based on a stabilized scheme with equal order polynomials for the velocity and the pressure with Local Projection Stabilization (LPS) for the gradient of the velocity and the pressure. For the POD method we add the same kind of LPS stabilization for the gradient of the velocity  and the pressure than the direct method, together with grad-div stabilization.
  In the second case, the snapshots are based on an inf-sup stable Galerkin method with grad-div stabilization and for the POD model we apply also grad-div stabilization. In this case, since the snapshots are discretely divergence-free, the pressure can be removed from the formulation of the POD approximation to the velocity. To approximate the pressure, needed in many engineering applications, we use a supremizer pressure recovery method.
  Error bounds with constants independent on inverse powers of the viscosity parameter are proved for both methods. Numerical experiments show the accuracy and performance of the schemes.
\end{abstract}

\noindent{\bf AMS subject classifications.} 35Q30,  65M12, 65M15, 65M20, 65M60, 65M70,\\ 76B75. \\
\noindent{\bf Keywords.} Navier-Stokes equations, proper orthogonal decomposition, fully discrete schemes, non inf-sup stable elements, inf-sup stable elements, grad-div stabilization.

\section{Introduction}

Reduced order models (ROM) are a fairly extensive technique applied in many different fields to reduce the computational cost of direct
numerical simulations while keeping enough accurate numerical approximations. In particular, ROM have been extensively applied in recent years to model incompressible flows \cite{Rozza15,IliescuJohn14,RebholzIliescu17,Quarteroni16,RozzaStabile18,Wang12}. Proper Orthogonal Decomposition (POD) method provides the elements (modes) of the reduced basis from a given database (snapshots) which are computed by means of a direct or full order method (FOM).

In this paper, we study the numerical approximation of incompressible flows with stabilized POD-ROM.
We consider the Navier-Stokes equations
\begin{align}
\label{NS} \partial_t\bu -\nu \Delta \bu + (\bu\cdot\nabla)\bu + \nabla p &= \bff &&\text{in }\ (0,T]\times\Omega,\nonumber\\
\nabla \cdot \bu &=0&&\text{in }\ (0,T]\times\Omega,
\end{align}
in a bounded domain $\Omega \subset {\mathbb R}^d$, $d \in \{2,3\}$ with initial condition $\bu(0)=\bu^0$. In~\eqref{NS},
$\bu$ is the velocity field, $p$ the kinematic pressure, $\nu>0$ the kinematic viscosity coefficient,
 and $\bff$ represents the accelerations due to external body forces acting
on the fluid. The Navier-Stokes equations \eqref{NS} must be complemented with boundary conditions. For simplicity,
we only consider homogeneous
Dirichlet boundary conditions $\bu = \boldsymbol 0$ on $\partial \Omega$.

In practice, for computing the snapshots one can use an inf-sup stable or a non inf-sup stable method. In this paper, we analyze two methods, one starting from a non inf-sup stable method and the other starting from an inf-sup stable method. For both cases we construct approximations to the velocity and the pressure since in many engineering problems not only approximations to the velocity are required, but also to the pressure (e.g., to compute forces on bodies in the flow and for incompressible shear flows \cite{NPM05}, as the mixing layer or the wake flow \cite{Tadmor11}, where neglecting the pressure may lead to large amplitude errors).

In \cite{samuele_pod} a POD stabilized method for the Navier-Stokes equations is introduced and analyzed. The method uses snapshots for the
velocity that do not satisfy a weakly discrete divergence-free condition since are based on equal order (say order $l$) velocity and pressure approximations. A LPS-type stabilization term for the pressure is introduced in the POD model. This term is inspired in the term-by-term LPS stabilization introduced in \cite{Cha_maca}, see also \cite{ahmed_etal}, \cite{nos_lps}. The stabilization term is based on a fluctuation operator of a locally stable projection onto the space of polynomials of degree $l-1$. The method in \cite{samuele_pod} allows to compute both approximations to the velocity and the pressure. Apart from LPS stabilization
for the pressure no other stabilization terms are considered in this method. A penalty term is added to the variational formulation to avoid the zero mean condition for the pressure. This technique introduces an extra parameter $\sigma$ that is assumed to be small (of order $10^{-6}$). In the error analysis of the
method there is a factor behaving as $\sigma$. To  handle a divergence error term appearing in the error analysis in \cite{samuele_pod} a saturation property
is applied. If we denote by $\alpha$ the saturation constant between the space of divergence POD velocity approximations and the space of POD pressure 
approximations then the bounds in \cite{samuele_pod} are multiplied by $\alpha^2/\sigma$. The experimental value of $\alpha$ in the numerical examples
in \cite{samuele_pod} has size around $10^{-2}$ so that $\alpha^2/\sigma\approx 10^2$. The error bounds in \cite{samuele_pod} are not independent
on inverse powers of the viscosity parameter.

In the first part of the present paper we consider the same method of \cite{samuele_pod} with some differences. First of all, we do not
add a penalty term for the pressure. Since both, the parameter $\sigma$ of the penalty term and its inverse $\sigma^{-1}$,  appear in the error analysis of \cite{samuele_pod} multiplying the
constants in the error bounds it is a problem to choose the optimal value for $\sigma$, while keeping an optimal order for the method.  Secondly, apart from term-by-term LPS stabilization for the pressure  we add LPS 
stabilization for the gradient of the velocity and also grad-div stabilization to the POD model. To compute the snapshots, we use a stabilized FOM. More precisely, the snapshots for the model are based on equal order velocity and pressure approximations of a stabilized finite element method that uses term-by-term LPS stabilization for the gradient of the velocity and the pressure. In \cite{nos_lps} error bounds for this method are proved in which the constants do not depend on inverse powers of the viscosity. An error bound for the $L^2$ error of the velocity of order $l+1/2$ is obtained (where $l$ is the local degree of the velocity and pressure approximations). Starting from an optimal order FOM allows to improve the accuracy of the model we present. As in \cite{samuele_pod}, the method we consider in this paper provides both approximations to the velocity and the pressure. In this paper, to handle a divergence error term appearing in the error analysis, we use some properties of the POD basis functions inherited from the direct model (the stabilized method), instead of a saturation assumption.   Then, we are able to handle all the terms in the error analysis using the same kind of stabilization in the POD model as in the FOM method. Moreover, we prove error bounds in which the constants do not depend on inverse powers of the viscosity parameter. To this end, the added grad-div term is essential.

To our knowledge this is the first time this kind of viscosity independent bounds are obtained for POD models. In \cite{pod_da_nos}  a POD data assimilation scheme for the Navier-Stokes equations is analyzed. The method
in \cite{pod_da_nos}  uses also grad-div stabilization. This method only provides approximations for the velocity. Some of the constants in the error bounds in \cite{pod_da_nos}  are also viscosity independent. Indeed, we follow some of the ideas in  \cite{pod_da_nos} for the error analysis of the present paper. However, in \cite{pod_da_nos} the snapshots are based on a non stabilized standard Galerkin method and, as a consequence, the error coming from the snapshots depends on inverse powers of the viscosity. Although a stabilized method could be used for the snapshots in \cite{pod_da_nos} to avoid this dependency, it could not be an equal order velocity-pressure method (as in the present paper) since the discrete divergence-free condition of the snapshots is strongly used in the error analysis of \cite{pod_da_nos}. To solve this problem,
we also introduce and analyze a second method in which the snapshots are computed with an inf-sup stable Galerkin method with grad-div stabilization for which error bounds with constants independent on the viscosity have been proved in \cite{NS_grad_div}. For the POD model we also add grad-div stabilization. In a first step, we compute only a POD approximation to the velocity, since the pressure can be removed from the formulation using the discrete divergence-free condition of the POD basis functions. In a second step, following \cite{schneier} we apply a supremizer \cite{Rozza15,RozzaVeroy07} pressure recovery method to get a POD pressure approximation. We get bounds both for the velocity and the pressure in which the constants do not depend on inverse powers of the viscosity.

The outline of the paper is as follows. In Section \ref{sec:PN} we introduce some preliminaries and notations. Section \ref{sec:POD} is devoted to recall the POD method and get some a priori bounds for the $L^2$-orthogonal projection of the FOM velocity onto the POD velocity space. In Section \ref{sec:LPS-ROM} we introduce and analyze the stabilized POD method based on a non inf-sup stable FOM. In Section \ref{sec:grad-div-ROM} we introduce and analyze the grad-div stabilized POD method based on an inf-sup stable FOM. In Section \ref{sec:num} some numerical experiments show the accuracy and performance of the methods. Finally, Section \ref{sec:Concl} presents the main conclusions.

\section{Preliminaries and notation}\label{sec:PN}
The following Sobolev embeddings \cite{Adams} will be used in the analysis: For
$q \in [1, \infty)$, there exists a constant $C=C(\Omega, q)$ such
that
\begin{equation}\label{sob1}
\|v\|_{L^{q'}} \le C \| v\|_{W^{s,q}}, \,\,\quad
\frac{1}{q'}
\ge \frac{1}{q}-\frac{s}{d}>0,\quad q<\infty, \quad v \in
W^{s,q}(\Omega)^{d}.
\end{equation}
The following inequality can be found in  \cite[Remark 3.35]{John}
\begin{eqnarray}\label{diver_vol}
\|\nabla \cdot \bv\|_0\le \|\nabla   \bv \|_0,\quad \bv\in H_0^1(\Omega)^d.
\end{eqnarray}
Let us denote by $Q=L_0^2(\Omega)=\left\{q\in L^2(\Omega)\mid (q,1)=0\right\}$.
Let $\mathcal{T}_{h}=(\tau_j^h,\phi_{j}^{h})_{j \in J_{h}}$, $h>0$ be a family of partitions of $\overline\Omega$, where $h$ denotes the maximum diameter of the elements $\tau_j^h\in \mathcal{T}_{h}$, and $\phi_j^h$ are the mappings from the reference simplex $\tau_0$ onto $\tau_j^h$.
We shall assume that the partitions are shape-regular and quasi-uniform. 
We define the following finite element spaces
\begin{eqnarray*}
Y_h^l&=& \left\{v_h\in C^0(\overline\Omega)\mid {v_h}_{\mid_K}\in {\Bbb P}_l(K),\quad \forall K\in \mathcal T_h\right\}, \ l\ge 1,\nonumber\\
{\boldsymbol Y}_h^l&=&(Y_h^l)^d,\quad {\boldsymbol X}_h^l={\boldsymbol Y}_h^l\cap H_0^1(\Omega)^d, \nonumber\\
Q_h^l&=&Y_h^l\cap L_0^2(\Omega).
\end{eqnarray*}
\begin{eqnarray}\label{eq:V}
{\boldsymbol V}_{h,l}={\boldsymbol X}_h^l\cap \left\{ {\boldsymbol \chi}_{h} \in H_0^1(\Omega)^d \mid
(q_{h}, \nabla\cdot{\boldsymbol\chi}_{h}) =0  \quad\forall q_{h} \in Q_{h}^{l-1}
\right\},\quad l\geq 2.
\end{eqnarray}If the family of
meshes is quasi-uniform then  the following inverse
inequality holds for each $\bv_{h} \in Y_{h}^{l}$, see e.g., \cite[Theorem 3.2.6]{Cia78},
\begin{equation}
\label{inv} \| \bv_{h} \|_{W^{m,p}(K)} \leq c_{\mathrm{inv}}
h_K^{n-m-d\left(\frac{1}{q}-\frac{1}{p}\right)}
\|\bv_{h}\|_{W^{n,q}(K)},
\end{equation}
where $0\leq n \leq m \leq 1$, $1\leq q \leq p \leq \infty$, and $h_K$
is the diameter of~$K \in \mathcal T_h$.
Let ${\boldsymbol V}=\left\{ {\boldsymbol \chi} \in H_0^1(\Omega)^d \mid
\nabla\cdot{\boldsymbol\chi} =0\right\}$. We consider a modified Stokes projection that was introduced in \cite{grad-div1} and that we denote by $\bs_h^m:{\boldsymbol V}\rightarrow {\boldsymbol V}_{h,l}$
satisfying
\begin{eqnarray}\label{stokespro_mod_def}
(\nabla \bs_h^m,\nabla \bvar_h)=(\nabla \bu,\nabla \bvar_h),\quad \forall \, \,
\bvar_{h} \in \bV_{h,l},
\end{eqnarray}
and the following error bound, see \cite{grad-div1}:
\begin{equation}
\|\bu-\bs_h^m\|_0+h\|\bu-\bs_h^m\|_1\le C\|\bu\|_j h^j,\qquad
1\le j\le l+1.
\label{stokespro_mod}
\end{equation}
From \cite{chenSiam}, we also have
\begin{align}
\|\nabla \bs_h^m\|_{L^\infty}\le C\|\nabla \bu\|_{L^\infty} \label{cotainfty1},
\end{align}
where $C$ does not depend on $\nu$ and~\cite[Lemma~3.8]{bosco_titi_yo}
\begin{align}
\label{cota_sh_inf_mu}
\|\bs_h^m\|_{L^\infty}  & \le C(\|\bu\|_{d-2}\|\bu\|_2)^{1/2},
\\
\label{la_cota_mu}
\|\nabla\bs_h^m\|_{L^{2d/(d-1)}} & \le
 C\bigl(\|\bu\|_1\|\bu\|_2\bigr)^{1/2},
\end{align}
where the constant~$C$ is independent of~$\nu$.


As mentioned in the introduction our aim is to get error bounds with constants independent on inverse powers of $\nu$. To this end, instead of starting with the snapshots of
a Galerkin method, we start from the snapshots of a stabilized method. We consider the method analyzed in \cite{nos_lps}:
the approximation of the solution of (\ref{NS}) with the implicit Euler method in time and a LPS method with LPS stabilization of the gradient of the velocity and the pressure. 

Assuming $l \geq 2$, the method reads: given  {${\bu}_h^0$ an approximation to $\bu^0$ in ${\boldsymbol X}_h^l$}, find $(\bu_h^{n+1},p_h^{n+1})\in {\boldsymbol X}_h^l\times Q_h^l$ such that {for $n\ge 0$}
\begin{eqnarray}\label{eq:gal_est}
\lefteqn{\hspace*{-16em}\left(\frac{\bu_h^{n+1}-\bu_h^n}{\Delta t},\bv_h\right)+\nu(\nabla \bu_h^{n+1},\nabla \bv_h)+b(\bu_h^{n+1},\bu_h^{n+1},\bv_h)-(p_h^{n+1},\nabla \cdot \bv_h)}
\nonumber\\
+
S_h(\bu_h^{n+1},\bv_h)&=&({\boldsymbol f}^{n+1},\bv_h) \quad \forall \bv_h\in {\boldsymbol X}_h^l,\\
(\nabla \cdot \bu_h^{n+1},q_h)+s_{\rm pres}(p_h^{n+1},q_h)&=&0 \quad \forall q_h\in Q_h^l.\nonumber
\end{eqnarray}
In \eqref{eq:gal_est} $(\bu_h^n,p_h^{n})$ is the stabilized approximation at time $t_n=n\Delta t$,  $\Delta t$
is the time step and $b_h(\cdot,\cdot,\cdot)$  is defined in the following way
$$
b_{h}(\bu_{h},\bv_{h},\bw_{h}) =((\bu_{h}\cdot \nabla ) \bv_{h}, \bw_{h})+ \frac{1}{2}( \nabla \cdot (\bu_{h})\bv_{h},\bw_{h}),
\quad \, \forall \, \bu_{h}, \bv_{h}, \bw_{h} \in {\boldsymbol X}_h^l.
$$
It is straightforward to verify that $b_h$ enjoys the skew-symmetry property
\begin{equation}\label{skew}
b_h(\bu,\bv,\bw)=-b_h(\bu,\bw,\bv) \qquad \forall \, \bu, \bv, \bw\in H_0^1(\Omega)^d.
\end{equation}
Also,
\begin{eqnarray}
S_h(\bu_h^{n+1},\bv_h)&=& \sum_{K\in \mathcal{T}_h}
\tau_{\nu,K}\left(\sigma^*_h (\nabla \bu_h^{n+1}), \sigma_h^*(\nabla \bv_h)\right)_K,\nonumber\\
s_{\rm pres}(p_h^{n+1},q_h)&=&\sum_{K\in \mathcal T_h}\tau_{p,K}(\sigma^*_h(\nabla p_h^{n+1}),\sigma^*_h(\nabla q_h))_K,\label{eq:spres}
\end{eqnarray}
where $\tau_{\nu,K}$ and $\tau_{p,K}$ are the gradient of the velocity and pressure stabilization parameters, respectively.
In addition,   $\sigma^*_h=Id-\sigma_h^{l-1}$, where $\sigma_h^{j}$ is a locally stable projection or interpolation operator from $L^2(\Omega)^d$ on ${\boldsymbol Y}_h^{j}.$
It will be assumed that
\begin{equation}\label{eq:tau_p2}
\alpha_1h_K\le \tau_{p,K}\le \alpha_2 h_K,
\end{equation}
and
\begin{equation}\label{eq:mu_2}
c_1h_K\le \tau_{\nu,K}\le c_2 h_K,
\end{equation}
with nonnegative constants $\alpha_1, \alpha_2, c_1, c_2$.
The following notation will be used
\begin{equation}\label{eq:norm_tau_p}
(f,g)_{\tau}=\sum_{K\in {\mathcal T}_h}\tau_{K}(f,g)_K, \quad
\|f\|_{\tau} = (f,f)_{\tau}^{1/2},
\end{equation}
where $\tau$ denotes either $\tau_p$ or $\tau_{\nu}$. Following \cite{ahmed_etal,Cha_maca}, we consider an approximation $\hat\bu_h^{n}\in {\boldsymbol X}_h^l$ of $\bu^n$ satisfying
\begin{equation}\label{eq:approxIMAJNA}
(\bu^n-\hat\bu_h^{n},\bv_h)=0,\quad\forall\bv_h\in{\boldsymbol Y}_h^{l-1}.
\end{equation}

Assume $\nu\le h$ and
\begin{eqnarray*}\label{eq:cond_dif}
C \|\hat\bu_h^{n+1}\|_{L^\infty}^2\left( \max_{K\in\mathcal T_h} \tau_{p,K}\right)\left(\max_{K\in\mathcal T_h} \tau_{\nu,K}^{-1}\right)\le \frac {1}{16},
\end{eqnarray*}
and let the parameter $\varepsilon$  be chosen
sufficiently small so that
\begin{equation}
\label{eq:epsilon}
C \varepsilon h \|\hat\bu_h^{n+1}\|_{L^\infty}^2\left(\max_{K\in\mathcal T_h} \tau_{\nu,K}^{-1}\right)\le \frac {1}{16}.
\end{equation}
Let
$$
M_u=1+C\|\bu\|_{L^\infty(H^3)}\left(1+\|\bu\|_{L^\infty(H^3)}\right)
$$
and
\begin{eqnarray*}\label{eq:tilde_Kb}
K_{u,p}=\left(\left(1+\varepsilon^{-1}+\|\bu\|_{L^\infty(H^3)}^2\right)\|\bu\|_{L^\infty(H^{l+1})}^2+\|\partial_t\bu\|_{L^\infty(H^{l+1})}^2
+\|p\|_{L^\infty(H^{l+1})}^2\right),
\end{eqnarray*}
$\varepsilon$ being the value in~(\ref{eq:epsilon}). Then, the following bound is proved in \cite[Theorem 6.1]{nos_lps}: for $T=M\Delta t$, $n\le M$
 \begin{eqnarray}\label{eq:err_after_gron_22b}
&&\|\bu^n-\bu_h^{n}\|_0^2+h^2\|\nabla(\bu^n-\bu_h^{n})\|_0^2
+\Delta t\sum_{j=1}^n\|\sigma_h^*(\nabla  (\bu^{j}-\bu_h^j))\|_{\tau_\nu}^2
\qquad
  \\
&&\quad+\Delta t\sum_{j=1}^n\|\sigma_h^*(\nabla  (p^j-p_h^{j}))\|_{\tau_p}^2\le C e^{2T M_u}\left(  T  K_{u,p} h^{2l+1}+ (\Delta t)^2\int_{t_0}^{t_n}\|\partial_{tt}\bu\|_{0}^2
~dt\right).\nonumber
\end{eqnarray}
From \eqref{eq:err_after_gron_22b} we can write
\begin{eqnarray}\label{eq:cota_est}
&&\|\bu^n-\bu_h^n\|_0+ h\|\bu^n-\bu_h^n\|_1+\left(\Delta t\sum_{j=1}^n S_{h}( \bu^j-\bu_h^{j},\bu^j-\bu_h^{j})\right)^{1/2}
\nonumber\\
&&\qquad+\left(\Delta t\sum_{j=1}^n s_{\rm pres}( p^j-p_h^{j},p^j-p_h^{j})\right)^{1/2}\nonumber\\
&&\ \quad\le C(\bu,p,l+1) (h^{l+1/2}+\Delta t), \quad 1\le n\le M,
\end{eqnarray}
where the constant $C(\bu,p,l+1)$ does not depend on inverse powers of $\nu$. From \cite[Remark 6.2]{nos_lps} and \cite{sim_pre_nos}
the following bound for the $L^2$ error of the pressure can be obtained for $1\le n\le M$
\begin{eqnarray}\label{eq:cota_est_pre}
\|p^{n}-p_h^{n}\|_0+ h\|p^{n}-p_h^{n}\|_1\le C(\bu,p,l+1) h^{-1/2}(h^{l+1/2}+\Delta t).
\end{eqnarray}
\section{Proper orthogonal decomposition}\label{sec:POD}
We will consider a proper orthogonal decomposition (POD) method.
Let us fix $T>0$ and $M>0$ and take $\Delta t=T/M$ and let us consider the following spaces
$$
{\cal \bV}=<\bu_h^1,\ldots,\bu_h^M>,\quad {\cal W}=<p_h^{1},\ldots,p_h^{M}>.
$$
Let $d_v$ be the dimension of the space $\cal \bV$ and let $d_p$ be the dimension of the space ${\cal W}$.

Let $K_v$, $K_p$ be the correlation matrices corresponding to the snapshots $K_v=((k_{i,j}^v))\in {\mathbb R}^{M\times M}$,
$K_p=((k_{i,j}^p))\in {\mathbb R}^{M\times M}$ where
$$
k_{i,j}^v=\frac{1}{M}(\bu_h^i,\bu_h^j),\quad k_{i,j}^p=\frac{1}{M}(p_h^{i},p_h^{j}),
$$
and $(\cdot,\cdot)$ is the inner product in $L^2(\Omega)^d$. Following \cite{kunisch} we denote by
$ \lambda_1\ge  \lambda_2,\ldots\ge \lambda_{d_v}>0$ the positive eigenvalues of $K_v$ and by
$\bv_1,\ldots,\bv_{d_v}\in {\mathbb R}^{M}$ the associated eigenvectors. Analogously, we denote by
$ \gamma_1\ge  \gamma_2,\ldots\ge \gamma_{d_p}>0$ the positive eigenvalues of $K_p$ and by
$\bw_1,\ldots,\bw_{d_p}\in {\mathbb R}^{M}$ the associated eigenvectors. Then, the (orthonormal) POD bases are given by
\begin{eqnarray}\label{lachi}
\bvar_k=\frac{1}{\sqrt{M}}\frac{1}{\sqrt{\lambda_k}}\sum_{j=1}^M v_k^j \bu_h(\cdot,t_j),\quad
\psi_k=\frac{1}{\sqrt{M}}\frac{1}{\sqrt{\gamma_k}}\sum_{j=1}^M w_k^j p_h(\cdot,t_{j}),
\end{eqnarray}
where $v_k^j$ is the $j$-th component of the eigenvector $\bv_k$ (respectively $w_k^j$ is the $j$-th component of the eigenvector $\bw_k$) and the following error formulas hold, see \cite[Proposition~1]{kunisch}
\begin{eqnarray}\label{eq:cota_pod_0}
\frac{1}{M}\sum_{j=1}^M\left\|\bu_h^j-\sum_{k=1}^r(\bu_h^j,\bvar_k)\bvar_k\right\|_{0}^2&=&\sum_{k=r+1}^{d_v}\lambda_k,\\
\frac{1}{M}\sum_{j=1}^M\left\|p_h^{j}-\sum_{k=1}^r(p_h^{j},\psi_k)\psi_k\right\|_{0}^2&=&\sum_{k=r+1}^{d_p}\gamma_k,\label{eq:cota_pod_0_pre}
\end{eqnarray}
where we have used the notation $\bu_h^j=\bu_h(\cdot,t_j)$, $p_h^{j}=p_h(\cdot,t_{j})$.

Denoting by $S$ the stiffness matrix for the POD basis: $S^v=((s_{i,j}^v))\in {\mathbb R}^{d_v\times d_v}$, $S^p=((s_{i,j}^p))\in {\mathbb R}^{d_p\times d_p}$  with
$s_{i,j}^v=(\nabla \bvar_i,\nabla \bvar_j)$
and $s_{i,j}^p=(\nabla \psi_i,\nabla \psi_j)$,
then for any $\bv \in {\cal \bV}$, $w\in {\cal W}$ the following inverse inequalities hold, see \cite[Lemma 2]{kunisch}
\begin{equation}\label{eq:inv_S}
||\nabla \bv ||_0\le \sqrt{\|S^v\|_2}\|\bv\|_0,\quad
||\nabla w ||_0\le \sqrt{\|S^p\|_2}\|w\|_0,
\end{equation}
where $\|\cdot\|_2$ denotes the spectral norm of the matrix.

From \eqref{eq:inv_S}, applying \eqref{eq:cota_pod_0} we get
\begin{eqnarray}\label{inv_1}
&&\frac{1}{M}\sum_{j=1}^M\left\|\nabla \bu_h^j-\sum_{k=1}^r(\bu_h^j,\bvar_k)\nabla \bvar_k\right\|_0^2\nonumber\\
&&\quad\le \frac{\|S^v\|_2}{M}\sum_{j=1}^M\left\|\bu_h^j-\sum_{k=1}^r(\bu_h^j,\bvar_k)\bvar_k\right\|_0^2\le \|S^v\|_2\sum_{k=r+1}^{d_v} \lambda_k.
\end{eqnarray}
Instead of \eqref{inv_1} we can also apply the following result that is taken from \cite[Lemma 3.2]{iliescu}
\begin{eqnarray}\label{inv_2}
\frac{1}{M}\sum_{j=1}^M\left\|\nabla\bu_h^j-\sum_{k=1}^r(\bu_h^j,\bvar_k)\nabla\bvar_k\right\|_0^2=
\sum_{k=r+1}^{d_v}\lambda_k \|\nabla\bvar_k\|_0^2.
\end{eqnarray}
Analogously, we obtain
\begin{eqnarray}\label{inv_1_p}
\frac{1}{M}\sum_{j=1}^M\left\|\nabla p_h^{j}-\sum_{k=1}^r(p_h^{j},\psi_k)\nabla\psi_k\right\|_0^2\leq
\|S^p\|_2\sum_{k=r+1}^{d_p}\gamma_k,
\end{eqnarray}
and
\begin{eqnarray}\label{inv_2_p}
\frac{1}{M}\sum_{j=1}^M\left\|\nabla p_h^{j}-\sum_{k=1}^r(p_h^{j},\psi_k)\nabla\psi_k\right\|_0^2=
\sum_{k=r+1}^{d_p}\gamma_k \|\nabla\psi_k\|_0^2.
\end{eqnarray}
In the sequel we will denote by
$$
{\cal \bV}^r=<\bvar_1,\bvar_2,\ldots,\bvar_r>,\quad{\cal W}^r=<\psi_1,\psi_2,\ldots,\psi_r>,
$$
and by $P_r^v$, $P_r^p$ the $L^2$-orthogonal projection onto ${\cal \bV}^r$ and ${\cal W}^r$, respectively.

\subsection{A priori bounds for the orthogonal projection onto ${\cal \bV}^r$}
In this section we will prove some a priori bounds for the orthogonal projection $P_r^v \bu_h^j$, $j=0,\cdots,M$ that are
obtained from a priori bounds for the Galerkin approximation $\bu_h^j$, $j=0,\cdots,M$. Then, we start getting a priori
bounds for the stabilized approximation $\bu_h^n$. We follow the same arguments we introduced in \cite{pod_da_nos}.
We start with the $L^\infty$ norm, using \eqref{inv}, \eqref{cota_sh_inf_mu}, \eqref{eq:cota_est} and  \eqref{stokespro_mod}
we get
\begin{eqnarray}\label{eq:uh_infty}
\|\bu_h^j\|_{L^\infty}&\le& \|\bu_h^j-\bs_h^m(\cdot,t_j)\|_{L^\infty}+\|\bs_h^m(\cdot,t_j)\|_{L^\infty}
\nonumber\\
&\le& C h^{-d/2}\|\bu_h^j-\bs_h^m(\cdot,t_j)\|_0+C(\|\bu^j\|_{d-2}\|\bu^j\|_2)^{1/2}\nonumber\\
&\le& C h^{-d/2}C(\bu,p,2)(h^{3/2}+\Delta t)+C(\|\bu^j\|_{d-2}\|\bu^j\|_2)^{1/2}\nonumber\\
&\le& C_{\bu,{\rm inf}}:=C\left( C(\bu,p,2)+(\|\bu^j\|_{d-2}\|\bu^j\|_2)^{1/2}\right),
\end{eqnarray}
whenever we assume the following condition holds for the time step
\begin{eqnarray}\label{eq:Delta_t0}
\Delta t \le C h^{d/2}.
\end{eqnarray}

Now we bound the $L^\infty$ norm of the gradient, using \eqref{inv}, \eqref{cotainfty1}, \eqref{eq:cota_est} and  \eqref{stokespro_mod}
we get
\begin{eqnarray}\label{eq:grad_uh_infty}
\|\nabla\bu_h^j\|_{L^\infty}&\le& \|\nabla\bu_h^j-\nabla\bs_h^m(\cdot,t_j)\|_{L^\infty}+\|\nabla\bs_h^m(\cdot,t_j)\|_{L^\infty}
\nonumber\\
&\le& C h^{-d/2}\|\bu_h^j-\bs_h^m(\cdot,t_j)\|_1+C\|\nabla \bu^j\|_{L^\infty}\nonumber\\
&\le& C h^{-d/2}C(\bu,p,3)h^{-1}\left(h^{5/2}+\Delta t \right)+C\|\nabla \bu^j\|_{L^\infty}\nonumber\\
&\le& C_{\bu,1,{\rm inf}}:= C\left(C(\bu,p,3)+\|\nabla  \bu\|_{L^\infty(L^\infty)}\right),
\end{eqnarray}
whenever we assume the following condition holds for the time step
\begin{eqnarray}\label{eq:Delta_t}
\Delta t \le C h^{(d+2)/2}.
\end{eqnarray}
Since condition \eqref{eq:Delta_t} implies \eqref{eq:Delta_t0} in the sequel we will assume the stronger condition \eqref{eq:Delta_t} holds.

Finally, we bound the $L^{2d/(d-1)}$ norm.  Using \eqref{inv}, \eqref{la_cota_mu}, \eqref{eq:cota_est} and  \eqref{stokespro_mod}
and assuming again condition \eqref{eq:Delta_t} holds (indeed the weaker condition $\Delta t \le C h^{3/2}$ would be enough) we get
\begin{eqnarray}\label{eq:uh_2d_dmenosuno}
\|\nabla \bu_h^j\|_{L^{2d/(d-1)}}&\le& \|\nabla(\bu_h^j-\bs_h^m(\cdot,t_j))\|_{L^{2d/(d-1)}}+\|\nabla \bs_h^m(\cdot,t_j)\|_{L^{2d/(d-1)}}
\nonumber\\
&\le& C h^{-1/2}\|\bu_h^j-\bs_h^m(\cdot,t_j)\|_1+C\bigl(\|\bu\|_1\|\bu\|_2\bigr)^{1/2}\nonumber\\
&\le& C h^{-1/2}C(\bu,p,2)h^{-1}(h^{3/2}+\Delta t)+C\bigl(\|\bu\|_1\|\bu\|_2\bigr)^{1/2}\nonumber\\
&\le& C_{\bu,{\rm ld}}:=C\left( C(\bu,p,2)+\bigl(\|\bu\|_1\|\bu\|_2\bigr)^{1/2}\right).
\end{eqnarray}
Now, we prove a priori bounds in the same norms for $P_r^v \bu_h^j$. We first observe that
$$
P_r^v \bu_h^j=(P_r^v \bu_h^j-\bu_h^j)+\bu_h^j.
$$
Since we have already proved error bounds for the second term on the right-hand side above we only need to bound the first one. To this end
we observe that it is easy to get
\begin{equation}\label{estasi}
\bu_h^j-P_r^v \bu_h^j=\sqrt{M}\sum_{k=r+1}^{d_v} \sqrt{\lambda_k} v_k^j\bvar_k.
\end{equation}
And then
\begin{eqnarray}\label{cota_bos1}
\|\bu_h^j-P_r^v \bu_h^j\|_0&=&\sqrt{M}\left(\sum_{k=r+1}^{d_v}\lambda_k|v_k^j|^2\right)^{1/2}\le \sqrt{M}\sqrt{\lambda_{r+1}}
\left(\sum_{k=r+1}^{d_v}|v_k^j|^2\right)^{1/2}\nonumber\\
&\le& \sqrt{M}\sqrt{\lambda_{r+1}},
\end{eqnarray}
where in the last inequality we have used that
\begin{eqnarray}\label{menor1}
\left(\sum_{k=r+1}^{d_v}|v_k^j|^2\right)^{1/2}\le 1
\end{eqnarray}
 since the matrix with columns
the vectors $\bv_k$ can be enlarged to an $M\times M$ orthogonal matrix.
Applying inverse inequality \eqref{inv}, \eqref{eq:uh_infty} and \eqref{cota_bos1} we get
\begin{eqnarray}\label{eq:cotaPlinf2}
\|P_r^v \bu_h^j\|_{L^\infty}&\le& \|\bu_h^j\|_{L^\infty}+C h^{-d/2}\|\bu_h^j-P_r^v \bu_h^j\|_0\nonumber\\
&\le& C_{\rm inf}:=
C_{\bu,{\rm inf}}+C h^{-d/2}\sqrt{M}\sqrt{\lambda_{r+1}}.
\end{eqnarray}
Now, we observe that from \eqref{inv_1} we get
$$
\|\nabla(\bu_h^j-P_r^v \bu_h^j)\|_0\le \sqrt{M}\|S^v\|_2^{1/2}\left(\sum_{k=r+1}^{d_v} \lambda_k\right)^{1/2}.
$$
Applying this inequality together with inverse inequality \eqref{inv} and \eqref{eq:grad_uh_infty} we obtain
\begin{eqnarray}\label{eq:cotanablaPlinf}
\|\nabla P_r^v \bu_h^j\|_{L^\infty}\le C_{1,\rm inf}:=C_{\bu,1,{\rm inf}}+C h^{-d/2}\sqrt{M}\|S^v\|_2^{1/2}\left(\sum_{k=r+1}^{d_v} \lambda_k\right)^{1/2}.
\end{eqnarray}
 Finally, arguing in the same way but applying \eqref{eq:uh_2d_dmenosuno} instead of \eqref{eq:grad_uh_infty} we can write
 \begin{eqnarray}\label{eq:cotanablaPld}
\|\nabla P_r^v\bu^j\|_{L^{2d/(d-1)}}&\le& C_{\rm ld}:=C_{\bu,{\rm ld}}+Ch^{-1/2}\sqrt{M}\|S^v\|_2^{1/2}\left(\sum_{k=r+1}^{d_v} \lambda_k\right)^{1/2}.
\end{eqnarray}

\section{A POD stabilized method from a non inf-sup stable FOM}\label{sec:LPS-ROM}
For a given initial condition $\bu_r^0$  we consider the following POD stabilized method in which, for simplicity, as a time integrator we apply the implicit Euler method. For $n\ge 1$, find $(\bu_r^n,p_r^{n})\in {\cal \bV}^r\times{\cal W}^r$ such that
\begin{eqnarray}\label{eq:pod_method1}
&&\left(\frac{\bu_r^{n}-\bu_r^{n-1}}{\Delta t},\bvar\right)+\nu(\nabla \bu_r^n,\nabla\bvar)+b_h(\bu_r^n,\bu_r^n,\bvar)
-(p_r^{n},\nabla \cdot \bvar)
\nonumber\\
&&\quad\quad\quad\ \ +S_h(\bu_r^n,\bvar)+\mu(\nabla \cdot\bu_r^n,\nabla \cdot\bvar)=(\bff^{n},\bvar),\quad \forall \bvar\in {\cal \bV}^r,\\\nonumber
&&(\nabla \cdot \bu_r^n,\psi)+s_{\rm pres}(p_r^{n},\psi)=0,\quad \quad\quad\quad \ \forall \psi\in {\cal W}^r,
\end{eqnarray}
where the gradient of the velocity and pressure stabilization terms are defined in \eqref{eq:spres}, and $\mu$ is the positive grad-div stabilization parameter.

To get the error bounds of the method we will compare $\bu_r^n$ with $P_r^v \bu^n_h$ and $p_r^n$ with $P_r^p p^n_h$.
Let us denote by
$$
\bbeta_h^n=P_r^v\bu^n_h-\bu_h^{n},\quad \xi_h^n=P_r^p p^{n}_h-p_h^n.
$$
It is easy to get
\begin{eqnarray}\label{eq:prov_prop}
&&\left(\frac{P_r^v \bu^{n}_h-P_r^v \bu^{n-1}_h}{\Delta t},\bvar\right)+\nu(\nabla  P_r^v \bu^n_h,\nabla\bvar)+b_h(P_r^v \bu^n_h,P_r^v \bu^n_h,\bvar)
\nonumber\\
&&\quad -(P_r^p p^{n}_h,\nabla \cdot \bvar)+S_h(P_r^v \bu_h^n,\bvar)+\mu(\nabla \cdot P_r^v \bu^n_h,\nabla \cdot\bvar)=(\bff^{n},\bvar)
 \nonumber\\
&&\quad +\nu(\nabla\bbeta_h^n,\nabla\bvar)+S_h(\bbeta_h^n,\bvar)-(\xi_h^n,\nabla \cdot \bvar)+\mu(\nabla \cdot\bbeta_h^n,\nabla \cdot \bvar)\nonumber\\
&&\quad+b_h(P_r^v\bu^n_h,P_r^v \bu^n_h,\bvar)-b_h(\bu^{n}_h,\bu^{n}_h,\bvar),\quad \forall \bvar\in {\cal \bV}^r,\\\nonumber
&&\quad(\nabla \cdot P_r^v \bu^n,\psi)+s_{\rm pres}(P_r p^{n},\psi)=(\nabla \cdot \bbeta_h^n,\psi)+s_{\rm pres}(\xi_h^n,\psi),\quad  \forall \psi\in {\cal W}^r.
\end{eqnarray}
Let us denote by
$$
\be_r^n=\bu_r^n-P_r^v \bu^n_h,\quad z_r^n=p_r^n-P_r^p p^n_h.
$$
Subtracting \eqref{eq:prov_prop} from \eqref{eq:pod_method1} and taking $\bvar=\be_r^n$ and $\psi=z_r^{n}$ we get
\begin{eqnarray}\label{eq:error}
&&\frac{1}{2\Delta t}\left(\|\be_r^n\|_0^2-\|\be_r^{n-1}\|_0^2\right)+\nu\|\nabla \be_r^n\|_0^2+S_h(\be_r^n,\be_r^n)
+\mu\|\nabla \cdot \be_r^n\|_0^2+s_{\rm pres}(z_r^{n},z_r^{n})\nonumber\\
&&\quad\le \left(b_h(P_r^v  \bu^n_h,P_r^v \bu^n_h,\be_r^n)-b_h(\bu_r^n,\bu_r^n,\be_r^n)\right)-\nu(\nabla\bbeta_h^n,\nabla\be_r^n)
-S_h(\bbeta_h^n,\be_r^n)\nonumber\\
&&\quad+(\xi_h^n,\nabla \cdot \be_r^n)-\mu(\nabla \cdot\bbeta_h^n,\nabla \cdot \be_r^n)-\left(b_h(P_r^v\bu^n_h,P_r^v \bu^n_h,\be_r^n)-b_h(\bu^{n}_h,\bu^{n}_h,\be_r^n)\right)\nonumber\\
&&\quad -(\nabla \cdot \bbeta_h^n,z_r^n)-s_{\rm pres}(\xi_h^n,z_r^n)\nonumber\\
&&\quad\le I+II+III+IV+V+VI+VII+VIII.
\end{eqnarray}
We will bound the terms on the right-hand side of \eqref{eq:error}.
We first observe that using the skew-symmetric property \eqref{skew}, \eqref{eq:cotaPlinf2} and \eqref{eq:cotanablaPlinf} we get
\begin{eqnarray}\label{eq:cota_er_1}
|I|\le |b_h(\be_r^n,P_r^v \bu_h^n,\be_r^n)|&\le& \|\nabla P_r^v\bu_h^n\|_{L^\infty}\|\be_r^n\|_0^2+\frac{1}{2}\|\nabla \cdot \be_r^n\|_0
\|P_r^v\bu_h^n\|_{L^\infty}\|\be_r^n\|_0\nonumber\\
&\le& C_{1,{\rm inf}}\|\be_r^n\|_0^2+\frac{C_{\rm inf}}{2}\|\nabla \cdot \be_r^n\|_0\|\be_r^n\|_0\nonumber\\
&\le& \left(C_{1,{\rm inf}}+\frac{C_{\rm inf}^2}{4\mu}\right)\|\be_r^n\|_0^2+\frac{\mu}{4}\|\nabla \cdot \be_r^n\|_0^2.
\end{eqnarray}
For the second term we obtain
\begin{eqnarray}\label{eq:cota_er_2}
|II|\le \frac{\nu}{2}\|\nabla \be_r^n\|_0^2+\frac{\nu}{2}\|\nabla\bbeta_h^n\|_0^2.
\end{eqnarray}
For the third
\begin{eqnarray}\label{eq:cota_er_3}
|III|\le \frac{1}{2}S_h(\be_r^n,\be_r^n)+\frac{1}{2}S_h(\bbeta_h^n,\bbeta_h^n).
\end{eqnarray}
For the forth
\begin{eqnarray}\label{eq:cota_er_4}
|IV|\le \frac{\mu}{8}\|\nabla \cdot \be_r^n\|_0^2+\frac{2}{\mu}\|\xi_h^n\|_0^2.
\end{eqnarray}
For the fifth term we get
\begin{eqnarray}\label{eq:cota_er_5}
|V|\le\frac{\mu}{8}\|\nabla \cdot \be_r^n\|_0^2+2\mu \|\nabla \cdot \bbeta_h^n\|_0^2.
\end{eqnarray}
To bound the sixth term we apply Sobolev embeddings \eqref{sob1}, \eqref{eq:cotaPlinf2},  \eqref{diver_vol}, \eqref{eq:cotanablaPld},
\eqref{eq:uh_infty} and \eqref{eq:uh_2d_dmenosuno}
\begin{eqnarray}\label{eq:cota_er_6}
|VI|&\le& |b_h(P_r^v\bu_h^n,\bbeta_h^n,\be_r^n)|+|b_h(\bbeta_h^n,\bu_h^n,\be_r^n)|
\nonumber\\&\le&
\|P_r^v\bu_h^n\|_{L^\infty}\|\nabla \bbeta_h^n\|_0\|\be_r^n\|_0+\frac{1}{2}\|\nabla \cdot P_r^v\bu_h^n\|_{L^{2d/(d-1)}}
\| \bbeta_h^n\|_{L^{2d}}\|\be_r^n\|_0\nonumber\\
&&\quad +\|\bbeta_h^n\|_{L^{2d}} \|\nabla  \bu_h^n\|_{L^{2d/(d-1)}}\|\be_r^n\|_0+\frac{1}{2}
\|\nabla \cdot\bbeta_h^n\|_0\|\bu_h^n\|_{L^\infty}\|\be_r^n\|_0\nonumber\\
&\le& C_{\rm inf}\|\nabla \bbeta_h^n\|_0\|\be_r^n\|_0+\frac{1}{2}C C_{\rm ld}\|\nabla \bbeta_h^n\|_0\|\be_r^n\|_0
\nonumber\\
&&\quad +C\|\nabla \bbeta_h^n\|_0C_{\bu,\rm ld}\|\be_r^n\|_0+\frac{1}{2}\|\nabla \bbeta_h^n\|_0C_{\bu,\rm inf}\|\be_r^n\|_0\nonumber\\
&\le& C\|\nabla \bbeta_h^n\|_0^2+\frac{1}{2}\|\be_r^n\|_0^2.
\end{eqnarray}
To bound the seventh term we will use the properties of the direct method used to compute the snapshots. From \eqref{estasi}, \eqref{lachi}
and \eqref{eq:pod_method1} we can write
\begin{eqnarray}\label{eq:cota_er_7}
|VII|&\le& \left|\sum_{k={r+1}}^{d_v}\sqrt{M}\sqrt{\lambda_k}v_k^j\left(\nabla \cdot \bvar_k,z_r^n\right)\right|
=\left|\sum_{k={r+1}}^{d_v}v_k^j\sum_{s=1}^Mv_k^s \left(\nabla \cdot \bu_h^s,z_r^n\right)\right|\nonumber\\
&\le&\left|\sum_{k={r+1}}^{d_v}v_k^j\sum_{s=1}^Mv_k^s s_{\rm pres}(p_h^s,z_r^n)\right|\nonumber\\
&\le&\left|
\sum_{k={r+1}}^{d_v}v_k^j\sum_{s=1}^Mv_k^s s_{\rm pres}(p_h^s,p_h^s)^{1/2}s_{\rm pres}(z_r^n,z_r^n)^{1/2}\right|\nonumber\\
&\le&\frac{1}{4}s_{\rm pres}(z_r^n,z_r^n)+\left(\sum_{k={r+1}}^{d_v}v_k^j\sum_{s=1}^Mv_k^s s_{\rm pres}(p_h^s,p_h^s)^{1/2}\right)^2\nonumber\\
&\le&\frac{1}{4}s_{\rm pres}(z_r^n,z_r^n)+\left(\sum_{k={r+1}}^{d_v}|v_k^j|^2\right)\sum_{k={r+1}}^{d_v}\left(\sum_{s=1}^Mv_k^s s_{\rm pres}(p_h^s,p_h^s)^{1/2}\right)^2\nonumber\\
&\le&\frac{1}{4}s_{\rm pres}(z_r^n,z_r^n)+\sum_{k={r+1}}^{d_v}\left(\sum_{s=1}^M|v_k^s|^2\right)\left(\sum_{s=1}^Ms_{\rm pres}(p_h^s,p_h^s)\right)
\nonumber\\
&\le&\frac{1}{4}s_{\rm pres}(z_r^n,z_r^n)+(d_v-r)\sum_{k=1}^M s_{\rm pres}(p_h^k,p_h^k),
\end{eqnarray}
where in the last inequality we have used \eqref{menor1}.

For the last term on the right-hand side of \eqref{eq:error} we get
\begin{eqnarray}\label{eq:cota_er_8}
|VIII|&\le&\frac{1}{4}s_{\rm pres}(z_r^n,z_r^n)+s_{\rm pres}(\xi_h^n,\xi_h^n).
\end{eqnarray}
Inserting \eqref{eq:cota_er_1}, \eqref{eq:cota_er_2}, \eqref{eq:cota_er_3}, \eqref{eq:cota_er_4}, \eqref{eq:cota_er_5}, \eqref{eq:cota_er_6}, \eqref{eq:cota_er_7} and \eqref{eq:cota_er_8} into \eqref{eq:error} we obtain
\begin{eqnarray*}\label{eq:error2}
&&\frac{1}{\Delta t}\left(\|\be_r^n\|_0^2-\|\be_r^{n-1}\|_0^2\right)+\nu\|\nabla \be_r^n\|_0^2+S_h(\be_r^n,\be_r^n)
+\mu\|\nabla \cdot \be_r^n\|_0^2+s_{\rm pres}(z_r^{n},z_r^{n})\nonumber\\
&&\le \left(2C_{1,\rm inf}+\frac{C_{\rm inf}^2}{2\mu}+1\right)\|\be_r^n\|_0^2
+C(\nu+\mu+1)\|\nabla \bbeta_h^n\|_0^2+S_h(\bbeta_h^n,\bbeta_h^n)
\nonumber\\
&&\quad+\frac{4}{\mu}\|\xi_h^n\|_0^2+2(d_v-r)(\Delta t)^{-1}\sum_{k=1}^M \Delta t s_{\rm pres}(p_h^k,p_h^k)+2s_{\rm pres}(\xi_h^n,\xi_h^n).
\end{eqnarray*}
Summing over times we reach
\begin{eqnarray}\label{eq:error2}
&&\|\be_r^n\|_0^2+\nu\sum_{j=1}^n\Delta t \|\nabla \be_r^j\|_0^2+\sum_{j=1}^n\Delta t S_h(\be_r^j,\be_r^j)
+\mu\sum_{j=1}^n\Delta t\|\nabla \cdot \be_r^j\|_0^2\nonumber\\
&&\quad+\sum_{j=1}^n\Delta ts_{\rm pres}(z_r^{j},z_r^{j})\nonumber\\
&&\le \|\be_r^0\|_0^2+\sum_{j=1}^n\Delta t \left(2C_{1,\rm inf}+\frac{C_{\rm inf}^2}{2\mu}+1\right)\|\be_r^j\|_0^2
+\btau_n,
\end{eqnarray}
where
\begin{eqnarray}\label{eq:tau}
\btau_n&=&\sum_{j=1}^n\Delta tS_h(\bbeta_h^j,\bbeta_h^j)+C(\nu+\mu+1)\sum_{j=1}^n\Delta t\|\nabla \bbeta_h^j\|_0^2\nonumber\\
&&\quad+\sum_{j=1}^n\Delta t\left(\frac{4}{\mu}\|\xi_h^j\|_0^2
+2s_{\rm pres}(\xi_h^j,\xi_h^j)\right)\nonumber\\
&&\quad+2(d_v-r)T(\Delta t)^{-1}\sum_{k=1}^M \Delta t s_{\rm pres}(p_h^k,p_h^k)\nonumber\\
&=&\btau_1+\btau_2+\btau_3+\btau_4.
\end{eqnarray}
Denoting by
$$
C_u=2C_{1,\rm inf}+\frac{C_{\rm inf}^2}{2\mu}+1,
$$
assuming $\Delta t C_u\le 1/2$ and applying Gronwall's Lemma \cite[Lemma 5.1]{Hey-RanIV} we get
\begin{eqnarray}\label{eq:error3}
&&\|\be_r^n\|_0^2+\nu\sum_{j=1}^n\Delta t \|\nabla \be_r^j\|_0^2+\sum_{j=1}^n\Delta t S_h(\be_r^j,\be_r^j)+\mu\sum_{j=1}^n\Delta t\|\nabla \cdot \be_r^j\|_0^2\nonumber\\
&&\quad+\sum_{j=1}^n\Delta ts_{\rm pres}(z_r^{j},z_r^{j})\le e^{2T C_u}\left(\|\be_r^0\|_0^2+\btau_n\right).
\end{eqnarray}
To conclude we will get an error bound for $\btau_n$. For the first term in \eqref{eq:tau}, applying the definition of $S_h$, the $L^2$-stability of $\sigma_h^*$, \eqref{eq:mu_2} and \eqref{inv_1} we get
\begin{eqnarray}\label{eq:trun1}
\btau_1\le C h \sum_{j=1}^n\Delta t \|\nabla\bbeta_h^j\|_0^2\le C h\|S^v\|_2\sum_{k=r+1}^{d_v} \lambda_k.
\end{eqnarray}
For the second term in \eqref{eq:tau}, applying again \eqref{inv_1} we obtain
\begin{eqnarray}\label{eq:trun2}
\btau_2\le  C(\nu+\mu+1)\|S^v\|_2\sum_{k=r+1}^{d_v} \lambda_k.
\end{eqnarray}
For the third term, we again apply the $L^2$-stability of $\sigma_h^*$, \eqref{eq:tau_p2}, \eqref{eq:cota_pod_0_pre} and \eqref{inv_1_p}  to prove
\begin{eqnarray}\label{eq:trun3}
\btau_3\le C (\mu^{-1}+h\|S^p\|_2)  \sum_{k=r+1}^{d_p} \gamma_k.
\end{eqnarray}
Finally, for the last term, applying \eqref{eq:cota_est} we get
\begin{eqnarray}\label{eq:trun4}
\btau_4\le C(\bu,p,l+1)^2(d_v-r)(\Delta t)^{-1} (h^{2l+1}+(\Delta t)^2).
\end{eqnarray}
Inserting \eqref{eq:trun1}, \eqref{eq:trun2}, \eqref{eq:trun3} and \eqref{eq:trun4} into \eqref{eq:error3} we reach
\begin{eqnarray}\label{eq:error4}
&&\|\be_r^n\|_0^2+\nu\sum_{j=1}^n\Delta t \|\nabla \be_r^j\|_0^2+\sum_{j=1}^n\Delta t S_h(\be_r^j,\be_r^j)+\mu\sum_{j=1}^n\Delta t\|\nabla \cdot \be_r^j\|_0^2\\
&&\quad+\sum_{j=1}^n\Delta ts_{\rm pres}(z_r^{j},z_r^{j})\le e^{2T C_u}\left(\|\be_r^0\|_0^2+
C(\nu+\mu+1+h)\|S^v\|_2\sum_{k=r+1}^{d_v} \lambda_k\right.\nonumber\\
&&\quad \left. +C (\mu^{-1}+h\|S^p\|_2)  \sum_{k=r+1}^{d_p} \gamma_k+C(\bu,p,l+1)^2(d_v-r)(\Delta t)^{-1} (h^{2l+1}+(\Delta t)^2)  \right)\nonumber.
\end{eqnarray}
\begin{remark}
Let us observe that, contrary to other error bounds in the literature, apart from the last term on the right-hand side of  \eqref{eq:error4},
all the error bound is written in terms of the eigenvalues. The last term appears due to the fact that  non inf-sup stable elements
have been used both in the formulation of the direct and the POD methods. The reason for being able to write most of bound in terms of the eigenvalues is that we have compared the POD velocity approximation with $P_r^v \bu_h^n$ instead of $P_r^v \bu^n$, as it is standard. On the other hand, it is interesting to observe that the constants in the error bound \eqref{eq:error4} are independent on inverse powers of the viscosity.
\end{remark}
\begin{Theorem}
Let $\bu$ be the velocity in the Navier-Stokes equations \eqref{NS}, let $\bu_r$ be the LPS POD
stabilized approximation defined in \eqref{eq:pod_method1}, and assume that the solution $(\bu,p)$ of \eqref{NS} is regular enough. Then, the
following bound holds
\begin{eqnarray}\label{eq:cota_finalLPS}
\sum_{j=1}^n\Delta t \|\bu_r^j-\bu^j\|_0^2&\le& 3e^{2T C_u}\left(\|\be_r^0\|_0^2+
C(\nu+\mu+1+h)\|S^v\|_2\sum_{k=r+1}^{d_v} \lambda_k\right.\nonumber\\
&&\quad \left. +C (\mu^{-1}+h\|S^p\|_2)  \sum_{k=r+1}^{d_p} \gamma_k\right.\\
&&\quad\left.+C(\bu,p,l+1)^2(d_v-r)(\Delta t)^{-1} (h^{2l+1}+(\Delta t)^2)  \right)\nonumber\\
&&\quad +3TC(\bu,p,l+1)^2(h^{2l+1}+(\Delta t)^2)+3\sum_{k=r+1}^{d_v}\lambda_k.\nonumber
\end{eqnarray}
\end{Theorem}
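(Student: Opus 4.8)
The plan is to convert the bound \eqref{eq:error4} on the projected error $\be_r^n = \bu_r^n - P_r^v\bu_h^n$ into a bound on the true error $\bu_r^n - \bu^n$ by inserting intermediate quantities and summing in time. First I would write, for each $j$,
$$
\bu_r^j - \bu^j = \be_r^j + (P_r^v\bu_h^j - \bu_h^j) + (\bu_h^j - \bu^j) = \be_r^j - \bbeta_h^j + (\bu_h^j - \bu^j),
$$
so that by the elementary inequality $\|a+b+c\|_0^2 \le 3\|a\|_0^2 + 3\|b\|_0^2 + 3\|c\|_0^2$ we get
$$
\sum_{j=1}^n\Delta t\|\bu_r^j-\bu^j\|_0^2 \le 3\sum_{j=1}^n\Delta t\|\be_r^j\|_0^2 + 3\sum_{j=1}^n\Delta t\|\bbeta_h^j\|_0^2 + 3\sum_{j=1}^n\Delta t\|\bu_h^j-\bu^j\|_0^2.
$$

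Next I would bound each of the three sums. For the first sum, since $\sum_{j=1}^n\Delta t\|\be_r^j\|_0^2 \le T\max_{1\le j\le n}\|\be_r^j\|_0^2$ and \eqref{eq:error4} gives a bound on $\|\be_r^n\|_0^2$ uniform in $n$ (the right-hand side of \eqref{eq:error4} being monotone in $n$), the first term is controlled by $T$ times the right-hand side of \eqref{eq:error4}; absorbing $T$ into the constants, or more precisely just using $\Delta t C_u \le 1/2$ so $n\Delta t \le T$, yields exactly the first big bracket in \eqref{eq:cota_finalLPS} with the factor $3e^{2TC_u}$. For the second sum, by the POD error formula \eqref{eq:cota_pod_0} one has $\frac1M\sum_{j=1}^M\|\bbeta_h^j\|_0^2 = \sum_{k=r+1}^{d_v}\lambda_k$, hence $\sum_{j=1}^n\Delta t\|\bbeta_h^j\|_0^2 \le \Delta t\sum_{j=1}^M\|\bbeta_h^j\|_0^2 = \Delta t\,M\sum_{k=r+1}^{d_v}\lambda_k = T\sum_{k=r+1}^{d_v}\lambda_k$, giving the $3\sum_{k=r+1}^{d_v}\lambda_k$ term (with $T$ absorbed). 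For the third sum, the FOM error estimate \eqref{eq:cota_est} gives $\|\bu_h^j-\bu^j\|_0 \le C(\bu,p,l+1)(h^{l+1/2}+\Delta t)$, so $\sum_{j=1}^n\Delta t\|\bu_h^j-\bu^j\|_0^2 \le T\,C(\bu,p,l+1)^2(h^{l+1/2}+\Delta t)^2 \le 2T\,C(\bu,p,l+1)^2(h^{2l+1}+(\Delta t)^2)$, which after adjusting constants produces the $3TC(\bu,p,l+1)^2(h^{2l+1}+(\Delta t)^2)$ term.

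Collecting the three contributions gives precisely \eqref{eq:cota_finalLPS}. There is no real obstacle here: the theorem is essentially a bookkeeping corollary of \eqref{eq:error4}, the POD projection identity \eqref{eq:cota_pod_0}, and the FOM bound \eqref{eq:cota_est}. The only points requiring minor care are (i) the replacement of $\sum_{j=1}^n\Delta t(\cdots)$ by $T(\cdots)$, which is legitimate because $n\le M$ and the per-step bounds are $j$-independent or monotone; and (ii) keeping track of which powers of $h$, $\Delta t$, and which eigenvalue tails appear, so that the final constants match. I would also note in passing that the regularity assumption on $(\bu,p)$ is exactly what is needed for \eqref{eq:cota_est} and for the a priori bounds \eqref{eq:cotaPlinf2}--\eqref{eq:cotanablaPld} that feed into $C_u$ via $C_{\rm inf}$ and $C_{1,\rm inf}$.
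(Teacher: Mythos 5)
Your proposal is correct and follows essentially the same route as the paper: the same triangle-inequality splitting into $\be_r^j$, $P_r^v\bu_h^j-\bu_h^j$ and $\bu_h^j-\bu^j$, with the three sums controlled by \eqref{eq:error4} (via $\sum_j\Delta t\|\be_r^j\|_0^2\le T\max_j\|\be_r^j\|_0^2$), by the POD identity \eqref{eq:cota_pod_0}, and by the FOM estimate \eqref{eq:cota_est}, respectively. Your remark about absorbing the factor $T$ into the generic constants is exactly the bookkeeping the paper performs implicitly.
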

\begin{proof}
Since $\sum_{j=1}^n\Delta t \|\be_r^j\|_0^2\le T \max_{1\le j\le n}\|\be_r^j\|_0^2$ and
\begin{eqnarray}\label{eq:lauso}
\sum_{j=1}^n\Delta t \|\bu_r^j-\bu^j\|_0^2&\le&
3\left(\sum_{j=1}^n\Delta t \|\be_r^j\|_0^2+\sum_{j=1}^n\Delta t\|P_r^v \bu_h^j-\bu_h^j\|_0^2\right.\nonumber\\
&&\quad\left.+\sum_{j=1}^n\Delta t\| \bu_h^j-\bu^j\|_0^2\right),
\end{eqnarray}
applying \eqref{eq:error4}, \eqref{eq:cota_pod_0} and \eqref{eq:cota_est} we finally obtain \eqref{eq:cota_finalLPS}.
\end{proof}
\begin{remark}
We observe that the error bound \eqref{eq:cota_finalLPS} has the component $$(\Delta t)^{-1} (h^{2l+1}+(\Delta t)^2)$$ that comes from the treatment of
the divergence term \eqref{eq:cota_er_7} when using non inf-sup stable elements. However, apart from this term, the $L^2$ error behaves in terms of the mesh size $h$ as the $L^2$ error of the direct method, i.e. as $h^{2l+1}$. This is not the case when one compares with $P_r^v \bu^n$. In that case, the $L^2$ error of the method is bounded by the $H^1$ norm of the error $P_r^v\bu^n-\bu^n$ and, consequently, can be bounded in terms of  the $H^1$  (instead of $L^2$) norm
of the direct method.
\end{remark}
\begin{remark}\label{rm:PresErrEstLPS}
For the method proposed in this section, denoting by
$$
Z_r^n=\sum_{j=1}^n \Delta t z_r^j,
$$
one can argue as in \cite{samuele_pod} to bound the pressure error in the norm defined in \cite{samuele_pod}:
$$
|||Z_r^n|||:=\sup_{\bvar_r\in {\cal \bV}^r}\frac{(Z_r^n,\nabla \cdot \bvar_r)}{\|\nabla \bvar_r\|_0}+s_{\rm pres}(Z_r^n,Z_r^n)^{1/2}.
$$
For the mixed finite element direct method \eqref{eq:gal_est} one has the discrete inf-sup condition, see \cite[Lemma 4.2]{ahmed_etal}:
$$
\forall q_h \in Q_h^l,\quad \|q_h\|_0\le \beta_0\left(\sup_{\bv_h\in {\boldsymbol X}_h^l}\frac{(q_h,\nabla \cdot \bv_h)}{\|\nabla \bv_h\|_0}
+s_{\rm pres}(q_h,q_h)^{1/2}\right).
$$
However, the above inequality has not been proved if one changes in the supremum ${\boldsymbol X}_h^l$ by ${\cal \bV}^r$, so that
in principle the norm $|||\cdot|||$ could be weaker than the $L^2$ norm.
\end{remark}
\section{A grad-div stabilized POD method from an inf-sup stable FOM}\label{sec:grad-div-ROM}
In this section we consider the case in which we start with snapshots of a direct method based on inf-sup stable elements. Since the snapshots
satisfy a discrete divergence-free condition in this case one can formulate the POD method with only velocity approximations. Following \cite{schneier} we will prove that a pressure approximation can be computed also in this case and we will prove error bounds for both the velocity and the pressure with constants independent on inverse powers of the viscosity.

Let $l \geq 2$, we consider the  MFE pair known as Hood--Taylor elements \cite{BF,hood0} $({\boldsymbol X}_h^l, Q_{h}^{l-1})$.
We recall the definition of the divergence-free space ${\boldsymbol V}_{h,l}$ \eqref{eq:V}.

For these elements a uniform inf-sup condition is satisfied (see \cite{BF}), that is, there exists a constant $\beta_{\rm is}>0$ independent of the mesh size $h$ such that
\begin{equation}\label{lbbh}
 \inf_{q_{h}\in Q_{h}^{l-1}}\sup_{\bv_{h}\in{\boldsymbol X}_h^l}
\frac{(q_{h},\nabla \cdot \bv_{h})}{\|\bv_{h}\|_{1}
\|q_{h}\|_{L^2/{\mathbb R}}} \geq \beta_{\rm{is}}.
\end{equation}
As a direct method we consider a Galerkin method with grad-div stabilization and the implicit Euler method in time.
Given  {${\bu}_h^0$ an approximation to $\bu^0$ in ${\boldsymbol X}_h^l$}, find $(\bu_h^{n+1},p_h^{n+1})\in {\boldsymbol X}_h^l\times Q_h^{l-1}$ such that {for $n\ge 0$}
\begin{eqnarray}\label{eq:gal_grad_div}
&&\left(\frac{\bu_h^{n+1}-\bu_h^n}{\Delta t},\bv_h\right)+\nu(\nabla \bu_h^{n+1},\nabla \bv_h)+b(\bu_h^{n+1},\bu_h^{n+1},\bv_h)-(p_h^{n+1},\nabla \cdot \bv_h)
\nonumber\\
&&\quad+
\mu(\nabla \cdot\bu_h^{n+1},\nabla \cdot \bv_h)=({\boldsymbol f}^{n+1},\bv_h) \quad \forall \bv_h\in {\boldsymbol X}_h^l,\\
&&(\nabla \cdot \bu_h^{n+1},q_h)=0 \quad \forall q_h\in Q_h^{l-1},\nonumber
\end{eqnarray}
where $\mu$ is the positive grad-div stabilization parameter.

It is well-known that considering the discrete divergence-free space $\bV_{h,l}$ we can remove the pressure from \eqref{eq:gal_grad_div} since
$\bu_h^{n+1}\in \bV_{h,l}$ satisfies
\begin{eqnarray}\label{eq:gal_grad_div2}
&&\left(\frac{\bu_h^{n+1}-\bu_h^n}{\Delta t},\bv_h\right)+\nu(\nabla \bu_h^{n+1},\nabla \bv_h)+b(\bu_h^{n+1},\bu_h^{n+1},\bv_h)
\nonumber\\
&&\quad+
\mu(\nabla \cdot\bu_h^{n+1},\nabla \cdot \bv_h)=({\boldsymbol f}^{n+1},\bv_h), \quad \forall \bv_h\in {\bV}_{h,l}.
\end{eqnarray}
For this method the following bounds hold, see \cite{NS_grad_div}
\begin{eqnarray}\label{eq:cota_grad_div}
\|\bu^n-\bu_h^n\|_0+ h\|\bu^n-\bu_h^n\|_1\le C(\bu,p,l+1) (h^{l}+\Delta t), \quad 1\le n\le M,
\end{eqnarray}
and
\begin{eqnarray}\label{eq:cota_grad_div_pre}
\left(\sum_{j=1}^n\Delta t \|p^j-p_h^j\|_0^2\right)^{1/2}\le C(\bu,p,l+1) (h^{l}+\Delta t), \quad 1\le n\le M,
\end{eqnarray}
where the constant $C(\bu,p,l+1)$ does not depend on inverse powers of $\nu$.

Arguing as before, we can get a priori bounds for the $L^2$ orthogonal projection $P_r^v \bu_h^n$. However, since now the FOM has a rate
of convergence of order $l$ instead of $l+1/2$, as the method in the previous section, there are some differences. We will assume $d=2$. 
In case $d=3$ one can prove a priori bounds
but using cubics (or higher) elements. In the error bound \eqref{eq:uh_infty} we have to replace $h^{3/2}$ by $h$.
 In the error bound \eqref{eq:grad_uh_infty} we have to replace $h^{5/2}$ by $h^2$. Finally, in the bound \eqref{eq:uh_2d_dmenosuno} we have to change $h^{3/2}$ by $h^2$ and
 $C(\bu,p,2)$ by $C(\bu,p,3)$.

We now consider the  grad-div POD model. For $n\ge 1$, find $\bu_r^n\in {\cal \bV}^r$ such that
\begin{eqnarray}\label{eq:pod_method2}
&&\left(\frac{\bu_r^{n}-\bu_r^{n-1}}{\Delta t},\bvar\right)+\nu(\nabla \bu_r^n,\nabla\bvar)+b_h(\bu_r^n,\bu_r^n,\bvar)
+\mu(\nabla \cdot\bu_r^n,\nabla \cdot\bvar)\nonumber\\
&&\quad=(\bff^{n},\bvar),\quad \forall \bvar\in {\cal \bV}^r.
\end{eqnarray}
We observe that $\bu_r^n$ belongs to the discrete divergence-free space $\bV_{h,l}$.

It is easy to get
\begin{eqnarray}\label{eq:prov_prop2}
&&\left(\frac{P_r^v \bu^{n}_h-P_r^v \bu^{n-1}_h}{\Delta t},\bvar\right)+\nu(\nabla  P_r^v \bu^n_h,\nabla\bvar)+b_h(P_r^v \bu^n_h,P_r^v \bu^n_h,\bvar)
\nonumber\\
&&\quad+\mu(\nabla \cdot P_r^v \bu^n_h,\nabla \cdot\bvar)\nonumber\\
&&=(\bff^{n},\bvar)
 +\nu(\nabla\bbeta_h^n,\nabla\bvar)+\mu(\nabla \cdot\bbeta_h^n,\nabla \cdot \bvar)\\
&&\quad+b_h(P_r^v\bu^n_h,P_r^v \bu^n_h,\bvar)-b_h(\bu^{n}_h,\bu^{n}_h,\bvar),\quad \forall \bvar\in {\cal \bV}^r.\nonumber
\end{eqnarray}
Subtracting \eqref{eq:prov_prop2} from \eqref{eq:pod_method2} and denoting as before by $\be_r^n=\bu_r^n-P_r^v\bu_h^n$
we get
\begin{eqnarray}\label{eq:error_need}
&&\left(\frac{\be_r^n -\be_r^{n-1}}{\Delta t},\bvar\right)+\nu(\nabla  \be_r^n ,\nabla\bvar)+b_h(\be_r^n,\be_r^n,\bvar)
+\mu(\nabla \cdot \be_r^n,\nabla \cdot\bvar)\nonumber\\
&&=
 -\nu(\nabla\bbeta_h^n,\nabla\bvar)-\mu(\nabla \cdot\bbeta_h^n,\nabla \cdot \bvar)-b_h(P_r^v\bu^n_h,P_r^v \bu^n_h,\bvar)\nonumber\\
 &&\quad+b_h(\bu^{n}_h,\bu^{n}_h,\bvar),\quad \forall \bvar\in {\cal \bV}^r.
\end{eqnarray}
Taking $\bvar=\be_r^n$ it is easy to obtain
\begin{eqnarray}\label{eq:error2}
&&\frac{1}{2\Delta t}\left(\|\be_r^n\|_0^2-\|\be_r^{n-1}\|_0^2\right)+\nu\|\nabla \be_r^n\|_0^2)
+\mu\|\nabla \cdot \be_r^n\|_0^2\nonumber\\
&&\quad\le -\left(b_h(P_r^v  \bu^n_h,P_r^v \bu^n_h,\be_r^n)-b_h(\bu_r^n,\bu_r^n,\be_r^n)\right)-\nu(\nabla\bbeta_h^n,\nabla\be_r^n)
\nonumber\\
&&\quad-\mu(\nabla \cdot\bbeta_h^n,\nabla \cdot \be_r^n)-\left(b_h(P_r^v\bu^n_h,P_r^v \bu^n_h,\be_r^n)-b_h(\bu^{n}_h,\bu^{n}_h,\be_r^n)\right).
\end{eqnarray}
Arguing exactly as before we get
\begin{eqnarray}\label{eq:error2}
&&\|\be_r^n\|_0^2+\nu\sum_{j=1}^n\Delta t \|\nabla \be_r^j\|_0^2
+\mu\sum_{j=1}^n\Delta t\|\nabla \cdot \be_r^j\|_0^2\\
&&\le \|\be_r^0\|_0^2+\sum_{j=1}^n\Delta t \left(2C_{1,\rm inf}+\frac{C_{\rm inf}^2}{2\mu}+1\right)\|\be_r^j\|_0^2
+C(\nu+\mu+1)\sum_{j=1}^n\Delta t\|\nabla \bbeta_h^j\|_0^2.\nonumber
\end{eqnarray}
Denoting by
$$
C_u=2C_{1,\rm inf}+\frac{C_{\rm inf}^2}{2\mu}+1,
$$
assuming $\Delta t C_u\le 1/2$ and applying Gronwall's Lemma \cite[Lemma 5.1]{Hey-RanIV} and \eqref{inv_1} we obtain
\begin{eqnarray}\label{eq:error3_b}
&&\|\be_r^n\|_0^2+\nu\sum_{j=1}^n\Delta t \|\nabla \be_r^j\|_0^2+\mu\sum_{j=1}^n\Delta t\|\nabla \cdot \be_r^j\|_0^2\nonumber\\
&&\quad\le e^{2T C_u}\left(\|\be_r^0\|_0^2+C(\nu+\mu+1)\|S^v\|_0\sum_{k=r+1}^{d_v}\lambda_k\right).
\end{eqnarray}
\begin{remark}
Let us observe that the constants in the error bound \eqref{eq:error3_b} are independent on inverse powers of $\nu$ and also that they only depend on the eigenvalues $\lambda_k$. This means that the error of the POD method, as expected, is essentially the error
of the best approximation to the Galerkin approximation in the POD space, i.e., the error in $P_r^v\bu_h^n$, plus the POD error coming from the tail of eigenvalues.
\end{remark}
\begin{Theorem}
Let $\bu$ be the velocity in the Navier-Stokes equations \eqref{NS}, let $\bu_r$ be the grad-div POD
stabilized approximation defined in \eqref{eq:pod_method2}, and assume that the solution $(\bu,p)$ of \eqref{NS} is regular enough. Then, the
following bound holds
\begin{eqnarray}\label{eq:cota_finalSUPv}
\sum_{j=1}^n\Delta t \|\bu_r^j-\bu^j\|_0^2&\le& 3e^{2T C_u}\left(\|\be_r^0\|_0^2+
C(\nu+\mu+1)\|S^v\|_2\sum_{k=r+1}^{d_v} \lambda_k\right.\nonumber\\
&&\quad \left.+3TC(\bu,p,l)^2(h^{2l}+(\Delta t)^2)+3\sum_{k=r+1}^{d_v}\lambda_k\right).
\end{eqnarray}
\end{Theorem}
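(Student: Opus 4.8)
The plan is to reduce the estimate to three pieces by the triangle inequality, exactly as in the proof of the previous theorem. Writing
$$
\bu_r^j-\bu^j=\be_r^j+(P_r^v\bu_h^j-\bu_h^j)+(\bu_h^j-\bu^j)
$$
and using the elementary inequality $(a+b+c)^2\le 3(a^2+b^2+c^2)$ gives
$$
\sum_{j=1}^n\Delta t\|\bu_r^j-\bu^j\|_0^2\le 3\left(\sum_{j=1}^n\Delta t\|\be_r^j\|_0^2+\sum_{j=1}^n\Delta t\|P_r^v\bu_h^j-\bu_h^j\|_0^2+\sum_{j=1}^n\Delta t\|\bu_h^j-\bu^j\|_0^2\right).
$$

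For the first sum I would bound $\sum_{j=1}^n\Delta t\|\be_r^j\|_0^2\le T\max_{1\le j\le n}\|\be_r^j\|_0^2$ and insert the energy estimate \eqref{eq:error3_b}, which already controls $\|\be_r^n\|_0^2$ with a constant independent of inverse powers of $\nu$ and purely in terms of the eigenvalue tail $\sum_{k=r+1}^{d_v}\lambda_k$ together with $\|\be_r^0\|_0^2$. For the second sum I would invoke the POD projection identity \eqref{eq:cota_pod_0}, which yields $\sum_{j=1}^M\Delta t\|P_r^v\bu_h^j-\bu_h^j\|_0^2=T\sum_{k=r+1}^{d_v}\lambda_k$, hence the same bound for any $n\le M$. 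For the third sum I would use the FOM error bound \eqref{eq:cota_grad_div}, square $\|\bu^j-\bu_h^j\|_0\le C(\bu,p,l+1)(h^l+\Delta t)$ and sum over the $n\le M$ time levels, producing a term of the form $TC(h^{2l}+(\Delta t)^2)$ with a $\nu$-independent constant. Collecting the three contributions gives \eqref{eq:cota_finalSUPv}.

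The final assembly above is routine; the substantive work has already been carried out in establishing \eqref{eq:error3_b}, where the decisive point is that the POD approximation is compared with $P_r^v\bu_h^n$ rather than with $P_r^v\bu^n$, so that the error is essentially the best-approximation error in the POD space plus the eigenvalue tail. Accordingly, the main obstacle lies there: handling the nonlinear differences $b_h(P_r^v\bu_h^n,P_r^v\bu_h^n,\be_r^n)-b_h(\bu_r^n,\bu_r^n,\be_r^n)$ and $b_h(P_r^v\bu_h^n,P_r^v\bu_h^n,\be_r^n)-b_h(\bu_h^n,\bu_h^n,\be_r^n)$ with constants independent of $\nu$, which forces one to absorb the divergence contributions arising from the skew-symmetric form $b_h$ into the grad-div term $\mu\|\nabla\cdot\be_r^n\|_0^2$, using the a priori $L^\infty$, $L^\infty$-gradient and $L^{2d/(d-1)}$ bounds on $P_r^v\bu_h^j$ from Section \ref{sec:POD}. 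Since all of this is in place before the statement, the theorem follows directly.
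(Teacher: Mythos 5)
Your proposal is correct and follows essentially the same route as the paper: the triangle-inequality splitting you write is exactly the paper's inequality \eqref{eq:lauso}, and the three sums are then controlled by \eqref{eq:error3_b}, \eqref{eq:cota_pod_0} and \eqref{eq:cota_grad_div}, just as in the paper's (one-line) proof. Your closing remarks correctly identify that the substantive work lives in establishing \eqref{eq:error3_b}, which indeed precedes the theorem.
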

\begin{proof}
Applying \eqref{eq:lauso}, from  \eqref{eq:error3_b}, \eqref{eq:cota_pod_0} and \eqref{eq:cota_grad_div}
we easily obtain \eqref{eq:cota_finalSUPv}.
\end{proof}
Following \cite{schneier} we propose a way to compute a POD pressure approximation in this framework. Given a function $p_r\in{\cal W}^r$ we consider the following problem: find $\bw_h\in {\boldsymbol X}_h^l$ such that
\begin{eqnarray}\label{eq:supremizer}
(\nabla \bw_h,\nabla\bv_h)=-(\nabla \cdot \bv_h,p_r),\quad \forall \bv_h \in {\boldsymbol X}_h^l.
\end{eqnarray}
The supremizer enrichment algorithm consists of solving \eqref{eq:supremizer} for each basis function $\psi_k$, $k=1,\cdots,r$ (we are considering to work with an equal number of basis functions for both velocity and pressure). Then,
applying a Gram-Schmidt orthonormalization  procedure to the set of solutions, a set of basis functions is obtained $\left\{\bzeta_k\right\}$, $k=1,\ldots,r$. Denoting by
$$
{\boldsymbol S}^r=<\bzeta_1,\bzeta_2,\ldots,\bzeta_{r}>\subset{\boldsymbol V}_{h,l}^\bot\subset {\boldsymbol X}_h^l,
$$
the following in-sup stability condition holds for the spaces ${\boldsymbol S}^{r}$ and ${\cal W}^r$, see \cite[Lemma 4.2]{schneier}:
\begin{eqnarray}\label{eq:inf_sup_supre}
\beta_{r}= \inf_{\psi\in {\cal W}^r}\sup_{\bzeta\in{{\boldsymbol S}^{r}}}
\frac{(\psi,\nabla \cdot \bzeta)}{\|\bzeta\|_{1}
\|\psi\|_{L^2/{\mathbb R}}} \geq \beta_{\rm{is}},
\end{eqnarray}
where $ \beta_{\rm{is}}$ is the constant in the inf-sup condition \eqref{lbbh}.

Using the space ${\boldsymbol S}^{r}$ a pressure $p_r^n\in {\cal W}^r$ can be computed satisfying for all $\bzeta\in {\boldsymbol S}^{r}$
\begin{eqnarray}\label{eq:pres}
(p_r^n,\nabla \cdot \bzeta)=\left(\frac{\bu_r^{n}-\bu_r^{n-1}}{\Delta t},\bzeta\right)+b_h(\bu_r^n,\bu_r^n,\bzeta)
+\mu(\nabla \cdot \bu_r^n,\nabla \cdot \bzeta)
-(\bff^{n},\bzeta).
\end{eqnarray}
Denoting by
$$
\|\bv\|_{{\boldsymbol S}^{r,*}}=\sup_{\bzeta\in {\boldsymbol S}^{r}}\frac{(\bv,\bzeta)}{\|\nabla \bzeta\|_0},
\quad
\|\bv\|_{{\cal \bV}^{r,*}}=\sup_{\bzeta\in {\cal \bV}^{r}}\frac{(\bv,\bzeta)}{\|\nabla \bzeta\|_0},
$$
by $0\leq\alpha<1$ the constant in the strengthened Cauchy-Schwarz inequality between the spaces ${\cal \bV}^r$ and ${\boldsymbol S}^{r}$,
and by
\begin{eqnarray}\label{cotaCr}
C_r^{H^1}=\left\|\sum_{k=1}^r \nabla \bvar_k\right\|_0,
\end{eqnarray}
and letting $\bvar\in {\cal \bV}^{r}$, it holds (see \cite[Lemma 5.13]{schneier})
\begin{eqnarray}\label{eq:cota_dual}
\|\bvar\|_{{\boldsymbol S}^{r,*}}\le \alpha C_P C_r^{H^1}\|\bvar\|_{{\cal \bV}^{r,*}},
\end{eqnarray}
where $C_P$ is the constant in the Poincar\'e inequality.

Now, we observe that adding and subtracting $P_r^p p_h^n$ from \eqref{eq:gal_grad_div} we get
\begin{eqnarray}\label{eq:p_hres}
(P_r^p p_h^n,\nabla \cdot \bzeta)&=&\left(\frac{\bu_h^{n}-\bu_h^{n-1}}{\Delta t},\bzeta\right)+b_h(\bu_h^n,\bu_h^n,\bzeta)
+\mu(\nabla \cdot \bu_h^n,\nabla \cdot \bzeta)
-(\bff^{n},\bzeta)\nonumber\\
&&\quad+(P_r^p p_h^n-p_h^n,\nabla \cdot \bzeta), \quad \forall\bzeta\in {\boldsymbol S}^{r}.
\end{eqnarray}
Subtracting \eqref{eq:p_hres} from \eqref{eq:pres} and applying \eqref{eq:inf_sup_supre}, \eqref{eq:cota_dual} and \eqref{diver_vol}  we get
\begin{eqnarray}\label{eq:erpre1}
&&\|z_r^n\|_0\le \frac{1}{\beta_{r}}\left(\alpha C_P C_r^{H^1} \left\|\frac{(\bu_r^n-\bu_h^n)-(\bu_r^{n-1}-\bu_h^{n-1})}{\Delta t }\right\|_{{\cal \bV}^{r,*}}\right.\\
&&\quad \left.\sup_{\bzeta\in {\boldsymbol S}^{r}}\frac{b_h(\bu_r^n,\bu_r^n,\bzeta)-b_h(\bu_h^n,\bu_h^n,\bzeta)}{\|\nabla \bzeta\|_0}
+\mu  \|\nabla \cdot(\bu_r^n-\bu_h^n)\|_0+\|\xi_h^n\|_0\right).\nonumber
\end{eqnarray}
We will bound now the terms on the right-hand side of \eqref{eq:erpre1}. For the first one we observe that
\begin{eqnarray*}
\left\|\frac{(\bu_r^n-\bu_h^n)-(\bu_r^{n-1}-\bu_h^{n-1})}{\Delta t }\right\|_{{\cal \bV}^{r,*}}
=\left\|\frac{\be_r^n-\be_r^{n-1}}{\Delta t }\right\|_{{\cal \bV}^{r,*}}.
\end{eqnarray*}
Applying now \eqref{eq:error_need} we obtain
\begin{eqnarray}\label{eq:ertem}
&&\left\|\frac{(\bu_r^n-\bu_h^n)-(\bu_r^{n-1}-\bu_h^{n-1})}{\Delta t }\right\|_{{\cal \bV}^{r,*}}
\le \nu\|\nabla \be_r^n\|_0+\mu\|\nabla\cdot\be_r^n\|_0\nonumber\\
&&\quad+(\nu+\mu) \|\nabla\bbeta_h^n\|_0+\sup_{\bzeta\in {\cal \bV}^{r}}\frac{b_h(\bu_h^n,\bu_h^n,\bzeta)-b_h(\bu_r^n,\bu_r^n,\bzeta)}{\|\nabla \bzeta\|_0}.
\end{eqnarray}
To bound the last term on the right-hand side of \eqref{eq:ertem} we write
\begin{eqnarray*}
&&b_h(\bu_r^n,\bu_r^n,\bzeta)-b_h(\bu_h^n,\bu_h^n,\bzeta)=b_h(\bu_r^n,\bu_r^n-\bu_h^n,\bzeta)+b_h(\bu_r^n-\bu_h^n,\bu_h,\bzeta)
\nonumber\\
&&\quad=b_h(\bu_r^n,\bzeta,\bu_r^n-\bu_h^n)+b_h(\bu_r^n-\bu_h^n,\bu_h,\bzeta)\nonumber\\
&&\quad\le\|\bu_r^n-\bu_h^n\|_0\left(\|\bu_r^n\|_{L^\infty}\|\bzeta\|_1+\frac{1}{2}\|\nabla \cdot\bu_r^n\|_{L^{2d/(d-1)}}\|\bzeta\|_{L^{2d}} \right)
\nonumber\\
&&\quad+\frac{1}{2}\left((\bu_r^n-\bu_h^n)\cdot \nabla \bu_h,\bzeta\right)-\frac{1}{2}\left((\bu_r^n-\bu_h^n)\cdot \nabla\bzeta,
\bu_h^n\right)\nonumber\\
&&\quad\le\|\bu_r^n-\bu_h^n\|_0\left(\|\bu_r^n\|_{L^\infty}\|\bzeta\|_1+\frac{1}{2}\|\nabla \cdot\bu_r^n\|_{L^{2d/(d-1)}}\|\bzeta\|_{L^{2d}} \right)
\nonumber\\
&&\quad+\frac{1}{2}\|\bu_r^n-\bu_h^n\|_0\left(\|\nabla \bu_h\|_{L^{2d/(d-1)}}\|\bzeta\|_{L^{2d}}+\|\bu_h^n\|_{L^\infty}\|\bzeta\|_1\right).
\end{eqnarray*}
Now, we observe that applying \eqref{inv}, \eqref{eq:cotaPlinf2}
and \eqref{eq:error3_b} we obtain
\begin{eqnarray*}
&&\|\bu_r^n\|_{L^\infty}\le \|\be_r^n \|_{L^\infty}+\|P_r^v \bu_h^n\|_{L^\infty}\le C h^{-d/2}\|\be_r^n\|_0+C_{\rm inf}\nonumber\\
&&\le C_{\bu_r,\rm inf}:=C h^{-d/2}e^{T C_u}\left(\|\be_r^0\|_0^2+C(\nu+\mu+1)\|S^v\|_0\sum_{k=r+1}^{d_v}\lambda_k\right)^{1/2}+C_{\rm inf}.
\end{eqnarray*}
Applying exactly the same argument we also obtain
\begin{eqnarray*}
&&\|\nabla \cdot\bu_r^n\|_{L^{2d/(d-1)}}\le  C h^{-1/2}\|\be_r^n\|_1+C_{\rm ld}\nonumber\\
&&\le C_{\bu_r,\rm ld}:=C h^{-d/2}e^{T C_u}\|S^v\|_2^{1/2}\left(\|\be_r^0\|_0^2+C(\nu+\mu+1)\|S^v\|_0\sum_{k=r+1}^{d_v}\lambda_k\right)^{1/2}
\nonumber\\
&&\quad+C_{\rm ld}.
\end{eqnarray*}
With the above error bounds, Sobolev embeddings \eqref{sob1}, \eqref{eq:uh_infty} and \eqref{eq:grad_uh_infty} we finally obtain
\begin{eqnarray}\label{eq:nonli}
&&b_h(\bu_r^n,\bu_r^n,\bzeta)-b_h(\bu_h^n,\bu_h^n,\bzeta)
\le C\|\bu_r^n-\bu_h^n\|_0\|\bzeta\|_1
\end{eqnarray}
Going back to \eqref{eq:ertem} we reach
\begin{eqnarray*}
\left\|\frac{(\bu_r^n-\bu_h^n)-(\bu_r^{n-1}-\bu_h^{n-1})}{\Delta t }\right\|_{{\cal \bV}^{r,*}}
&\le& \nu\|\nabla \be_r^n\|_0+\mu\|\nabla\cdot\be_r^n\|_0+(\nu+\mu) \|\nabla\bbeta_h^n\|_0\nonumber\\
&&\quad+C\|\bu_r^n-\bu_h^n\|_0
\end{eqnarray*}
Taking into account that
\begin{eqnarray}\label{eq:otra_norma0}
\|\bu_r^n-\bu_h^n\|_0\le \|\be_r^n\|_0+\|P_r^v\bu_h^n-\bu_h^n\|_0,
\end{eqnarray}
we finally obtain
\begin{eqnarray}\label{eq:ertem2}
\left\|\frac{(\bu_r^n-\bu_h^n)-(\bu_r^{n-1}-\bu_h^{n-1})}{\Delta t }\right\|_{{\cal \bV}^{r,*}}
&\le& \nu\|\nabla \be_r^n\|_0+\mu\|\nabla\cdot\be_r^n\|_0+(\nu+\mu) \|\nabla\bbeta_h^n\|_0
\nonumber\\
&&\quad+\|\be_r^n\|_0+\|P_r^v\bu_h^n-\bu_h^n\|_0.
\end{eqnarray}
For the second term on the right-hand side of \eqref{eq:erpre1} we apply \eqref{eq:nonli} and \eqref{eq:otra_norma0}. Inserting also \eqref{eq:ertem2}
into \eqref{eq:erpre1} we finally reach
\begin{eqnarray}\label{eq:erpre2}
\|z_r^n\|_0&\le& \beta_{r}^{-1}\alpha C_P C_r^{H^1}\left( \nu\|\nabla \be_r^n\|_0+\mu\|\nabla\cdot\be_r^n\|_0+(\nu+\mu) \|\nabla\bbeta_h^n\|_0
\right)\nonumber\\
&&\quad+\beta_{r}^{-1}\alpha C_P C_r^{H^1}\left(\|\be_r^n\|_0+\|P_r^v\bu_h^n-\bu_h^n\|_0 \right)\\
&&\quad C\beta_{r}^{-1}\left(\|\be_r^n\|_0+\|P_r^v\bu_h^n-\bu_h^n\|_0+\mu  \|\nabla \cdot(\bu_r^n-\bu_h^n)\|_0+\|\xi_h^n\|_0\right)
.\nonumber
\end{eqnarray}
We observe that
\begin{eqnarray*}
\|\nabla \cdot(\bu_r^n-\bu_h^n)\|_0\le \|\nabla \cdot\be_r^n\|_0+\|\nabla (P_r^v\bu_h^n-\bu_h^n)\|_0.
\end{eqnarray*}
From \eqref{eq:erpre2} 
we get
\begin{eqnarray}\label{eq:erpre3}
\sum_{j=1}^n\Delta t \|z_r^j\|_0^2&\le& C \alpha C_r^{H^1}\sum_{j=1}^n\Delta t\left( \nu\|\nabla \be_r^j\|_0^2+\mu\|\nabla\cdot\be_r^j\|_0^2\right)
\nonumber\\
&&\quad +C \alpha C_r^{H^1}(\nu+\mu)\sum_{j=1}^n\|\nabla(P_r^v\bu_h^j-\bu_h^j)\|_0^2\nonumber\\
&&\quad +C\left(1+\alpha C_r^{H^1}\right)\sum_{j=1}^n\Delta t \left(\|\be_r^j\|_0^2+\|P_r^v\bu_h^j-\bu_h^j\|_0^2\right)\nonumber\\
&&\quad + C \mu\sum_{j=1}^n\Delta t \left(\|\nabla\cdot\be_r^j\|_0^2+\|\nabla(P_r^v\bu_h^j-\bu_h^j)\|_0^2\right)\nonumber\\
&&\quad +C \sum_{j=1}^n\Delta t \|P_r^p p_h^j-p_h^j\|_0^2.
\end{eqnarray}
To conclude we apply \eqref{eq:cota_pod_0}, \eqref{eq:cota_pod_0_pre}, \eqref{inv_1} and \eqref{eq:error3_b} to \eqref{eq:erpre3}
\begin{eqnarray}\label{eq:erpre4}
&&\sum_{j=1}^n\Delta t \|z_r^j\|_0^2\nonumber\\
&&\le C(1+T)\left(1+\alpha C_r^{H^1}\right)e^{2T C_u}\left(\|\be_r^0\|_0^2+C(\nu+\mu+1)\|S^v\|_0\sum_{k=r+1}^{d_v}\lambda_k\right)
\nonumber\\
&&\quad+\left(C \alpha C_r^{H^1}(\nu+\mu)+\mu\right)\|S^v\|_2\sum_{k=r+1}^{d_v} \lambda_k\nonumber\\
&&\quad+C\left(1+\alpha C_r^{H^1}\right)\sum_{k=r+1}^{d_v} \lambda_k+C\sum_{k=r+1}^{d_p} \gamma_k.
\end{eqnarray}
\begin{Theorem}
Let $p$ be the pressure in the Navier-Stokes equations \eqref{NS}, let $p_r$ be the grad-div POD
stabilized pressure defined in \eqref{eq:pres}, and assume that the solution $(\bu,p)$ of \eqref{NS} is regular enough. Then, the
following bound holds
\begin{eqnarray}\label{eq:erpre4_ul}
&&\sum_{j=1}^n\Delta t \|p_r^j-p^j\|_0^2\nonumber\\
&&\le C(1+T)\left(1+\alpha C_r^{H^1}\right)e^{2T C_u}\left(\|\be_r^0\|_0^2+C(\nu+\mu+1)\|S^v\|_0\sum_{k=r+1}^{d_v}\lambda_k\right)
\nonumber\\
&&\quad+3\left(C \alpha C_r^{H^1}(\nu+\mu)+\mu\right)\|S^v\|_2\sum_{k=r+1}^{d_v} \lambda_k\nonumber\\
&&\quad+C\left(1+\alpha C_r^{H^1}\right)\sum_{k=r+1}^{d_v} \lambda_k+C\sum_{k=r+1}^{d_p} \gamma_k+3C(\bu,p,l)^2 (h^{l}+\Delta t)^2.
\end{eqnarray}
\end{Theorem}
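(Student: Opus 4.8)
The plan is to split the pressure error at each time level into the three natural contributions by the triangle inequality,
\[
\|p_r^j-p^j\|_0\le \|z_r^j\|_0+\|P_r^p p_h^j-p_h^j\|_0+\|p_h^j-p^j\|_0,
\]
where $z_r^j=p_r^j-P_r^p p_h^j$ is exactly the quantity estimated in \eqref{eq:erpre4} (obtained from the supremizer pressure definition \eqref{eq:pres}). Squaring, using $(a+b+c)^2\le 3(a^2+b^2+c^2)$, multiplying by $\Delta t$ and summing over $j=1,\dots,n$ reduces the claim to bounding separately $\sum_{j=1}^n\Delta t\|z_r^j\|_0^2$, $\sum_{j=1}^n\Delta t\|P_r^p p_h^j-p_h^j\|_0^2$ and $\sum_{j=1}^n\Delta t\|p_h^j-p^j\|_0^2$.

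The first of these three sums is precisely what \eqref{eq:erpre4} provides, contributing the first four groups of terms on the right-hand side of \eqref{eq:erpre4_ul}. For the second sum I would enlarge the summation to all $M$ snapshots and invoke the POD projection identity \eqref{eq:cota_pod_0_pre}: since
\[
\sum_{j=1}^n\Delta t\|P_r^p p_h^j-p_h^j\|_0^2\le \Delta t\sum_{j=1}^M\|P_r^p p_h^j-p_h^j\|_0^2=T\,\frac1M\sum_{j=1}^M\|p_h^j-P_r^p p_h^j\|_0^2=T\sum_{k=r+1}^{d_p}\gamma_k,
\]
this term is absorbed into the $C\sum_{k=r+1}^{d_p}\gamma_k$ already present. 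The third sum is exactly the fully discrete FOM pressure estimate \eqref{eq:cota_grad_div_pre}, which yields the final term $3C(\bu,p,l)^2(h^{l}+\Delta t)^2$ (the constant being the $C(\bu,p,l+1)$ of \eqref{eq:cota_grad_div_pre}). Collecting the three contributions gives \eqref{eq:erpre4_ul}.

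I do not expect a genuine obstacle at this stage, since all the delicate work has already been carried out in deriving \eqref{eq:erpre4}: the transfer of the discrete inf-sup constant \eqref{eq:inf_sup_supre}, the strengthened Cauchy--Schwarz bound \eqref{eq:cota_dual}, the control of the discrete time increment of $\be_r^n$ via the error equation \eqref{eq:error_need}, the nonlinear-term estimate \eqref{eq:nonli}, and the absorption of the $\nu$-weighted gradient terms through \eqref{eq:error3_b} and \eqref{inv_1}. The only point that needs attention is the bookkeeping that keeps every constant independent of inverse powers of $\nu$: in \eqref{eq:erpre4} the parameter $\nu$ appears only multiplying $\|\nabla\be_r^j\|_0^2$ or $\|\nabla\bbeta_h^j\|_0^2$, which are controlled by the $\nu$-independent right-hand sides of \eqref{eq:error3_b} and \eqref{inv_1}, while the exponential factor $C_u$ is itself $\nu$-independent by construction; hence the stated bound inherits this property. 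In short, the statement is essentially a corollary of \eqref{eq:erpre4} together with the two off-the-shelf estimates \eqref{eq:cota_pod_0_pre} and \eqref{eq:cota_grad_div_pre}.
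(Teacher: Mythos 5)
Your proposal is correct and follows essentially the same route as the paper: the paper's proof is exactly the triangle-inequality splitting $\|p_r^j-p^j\|_0\le\|z_r^j\|_0+\|P_r^p p_h^j-p_h^j\|_0+\|p_h^j-p^j\|_0$ followed by the three off-the-shelf bounds \eqref{eq:erpre4}, \eqref{eq:cota_pod_0_pre} and \eqref{eq:cota_grad_div_pre}. Your additional remarks on extending the sum over the projection error to all $M$ snapshots and on the $\nu$-independence bookkeeping are consistent with the paper's (terser) argument.
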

\begin{proof}
Since applying triangle inequality we have
\begin{equation}\label{eq:cota_finalSUPp}
\sum_{j=1}^n\Delta t \|p_r^j-p^j\|_0^2\le 3\sum_{j=1}^n\Delta t \|z_r^j\|_0^2+3
\sum_{j=1}^n\Delta t \|P_h p_h^j-p_h^j\|_0^2+3\sum_{j=1}^n\Delta t \|p_h^j-p^j\|_0^2,
\end{equation}
the bound \eqref{eq:erpre4_ul}
 for the pressure follows from \eqref{eq:erpre4}, \eqref{eq:cota_pod_0_pre} and \eqref{eq:cota_grad_div_pre}.
\end{proof}

\section{Numerical experiments}\label{sec:num}

In this section we present numerical results for the LPS-ROM \eqref{eq:pod_method1} and the grad-div-ROM \eqref{eq:pod_method2}-\eqref{eq:pres}, introduced and analyzed in the previous sections. For the LPS-ROM \eqref{eq:pod_method1}, the standard discrete inf-sup condition is circumvented and POD modes, which are computed by the LPS-FEM \eqref{eq:gal_est}, are not required indeed to be weakly divergence-free. For the grad-div-ROM \eqref{eq:pod_method2}-\eqref{eq:pres}, the standard discrete inf-sup condition is recovered through supremizer enrichment \cite{Rozza15,RozzaVeroy07} and at least weakly divergence-free POD modes are required, which are computed by the grad-div-FEM \eqref{eq:gal_grad_div}. The numerical experiments are performed on the benchmark problem of the 2D unsteady flow around a cylinder with circular cross-section \cite{SchaferTurek96} at Reynolds number $Re=100$. The open-source FE software FreeFEM \cite{Hecht12} has been used to run the numerical experiments. 

Within this framework, we also propose an adaptive in time algorithm for the online grad-div parameter $\mu$ used both in the LPS-ROM \eqref{eq:pod_method1} and the grad-div-ROM \eqref{eq:pod_method2}-\eqref{eq:pres}, by adjusting dissipation arising from the online grad-div stabilization term in order to better match the FOM energy, which proves to significantly improve the long time ROM accuracy.

\medskip

{\em Setup for numerical simulations.}
Following \cite{SchaferTurek96}, the computational domain is given by a rectangular channel with a circular hole (see Figure \ref{fig:Mesh} for the computational grid used):
$$
\Om=\{(0,2.2)\times(0,0.41)\}\backslash \{\xv : (\xv-(0.2,0.2))^2 \leq 0.05^2\}.
$$
\begin{figure}[htb]
\begin{center}
\centerline{\includegraphics[width=4.75in]{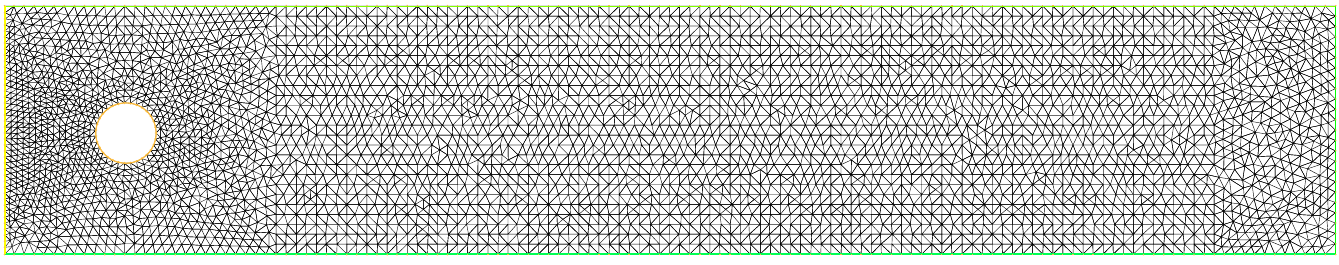}}
\caption{Computational grid.}\label{fig:Mesh}
\end{center}
\end{figure}

No slip boundary conditions are prescribed on the horizontal walls and on the cylinder, and a parabolic inflow profile is provided at the inlet:
$$
\uv(0,y,t)=(4U_{m}y(A-y)/A^2, 0)^{T},
$$
with $U_{m}=\uv(0,H/2,t)=1.5\,\rm{m/s}$, and $A=0.41\,\rm{m}$ the channel height. At the outlet, we impose outflow (do nothing) boundary conditions $(\nu\nabla\uv - p\,Id)\nv={\bf 0}$, with $\nv$ the outward normal to the domain. The kinematic viscosity of the fluid is $\nu=10^{-3}\,\rm{m^{2}/s}$ and there is no external (gravity) forcing, i.e. $\fv={\bf 0}\,\rm{m/s^2}$. Based on the mean inflow velocity $\overline{U}=2U_{m}/3=1\,\rm{m/s}$, the cylinder diameter $D=0.1\,\rm{m}$ and the kinematic viscosity of the fluid $\nu=10^{-3}\,\rm{m^{2}/s}$, the Reynolds number considered is $Re=\overline{U}D/\nu=100$. In the fully developed periodic regime, a vortex shedding can be observed behind the obstacle, resulting in the well-known von K\'arm\'an vortex street (see Figure \ref{fig:FinFOMSol}).

\begin{figure}[htb]
\begin{center}
\includegraphics[width=2.38in]{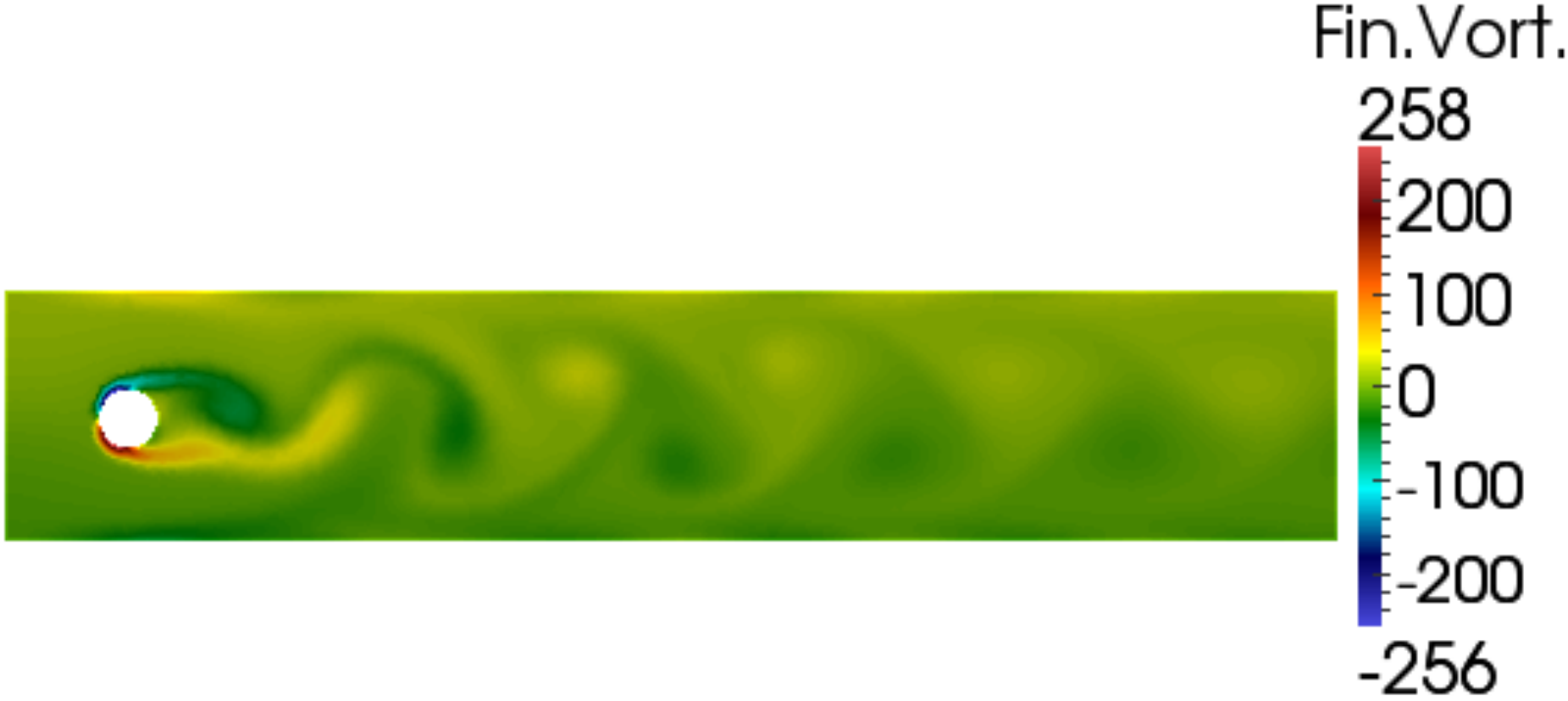}\hspace{-0.1cm}
\includegraphics[width=2.38in]{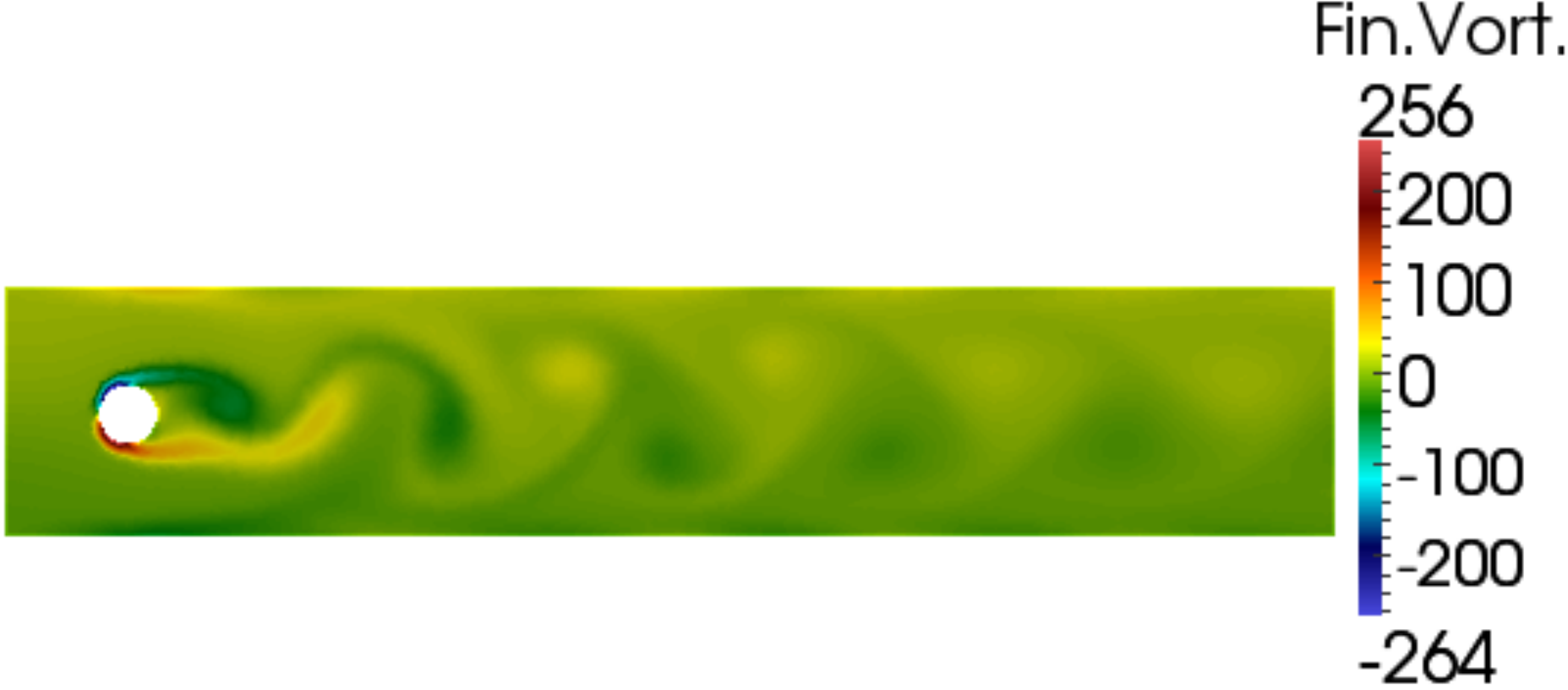}
\caption{Final FOM vorticity for LPS-FEM \eqref{eq:gal_est} (left) and grad-div-FEM \eqref{eq:gal_grad_div} (right).}\label{fig:FinFOMSol}
\end{center}
\end{figure}

For the evaluation of computational results, we are interested in studying the temporal evolution of the following
quantities of interest. The kinetic energy of the flow is the most frequently monitored quantity, given by:
$$
E_{kin}=\frac{1}{2}\nor{\uv}{{\bf L}^2}^2.
$$
Other relevant quantities of interest are the drag and lift coefficients. In order to reduce the boundary approximation influences, in the present work these quantities are computed as volume integrals \cite{John04paper}:
$$
c_{D}=-\frac{2}{D\overline{U}^2}\left[(\partial_t \uv,\vv_D)+b(\uv,\uv,\vv_D)+\nu(\nabla\uv,\nabla\vv_D)-(p,\div\vv_D)\right],
$$
$$
c_{L}=-\frac{2}{D\overline{U}^2}\left[(\partial_t \uv,\vv_L)+b(\uv,\uv,\vv_L)+\nu(\nabla\uv,\nabla\vv_L)-(p,\div\vv_L)\right],
$$
for arbitrary test functions $\vv_{D},\vv_{L}\in {\bf H}^{1}$ such that $\vv_{D}=(1,0)^{T}$ on the boundary of the cylinder and vanishes on the other boundaries, $\vv_{L}=(0,1)^{T}$ on the boundary of the cylinder and vanishes on the other boundaries. In the actual computations, we have used the approach described in \cite{JohnMatthies01} to fix the test functions $\vv_{D},\vv_{L}$ and evaluate the drag and lift coefficients $c_{D},c_{L}$. 
%
%

\medskip 

{\em FOM and POD modes.}
The numerical method used to compute the snapshots for the LPS-ROM \eqref{eq:pod_method1} is the LPS-FEM \eqref{eq:gal_est} described in Section \ref{sec:PN}, with a spatial discretization using equal order ${\bf P}^{2}-\mathbb{P}^2$ FE for the pair velocity-pressure on the relatively coarse computational grid in Figure \ref{fig:Mesh}, for which $h = 2.76\cdot 10^{-2}\,\rm{m}$, resulting in $32\,488$ d.o.f. for velocities and $16\,244$ d.o.f. for pressure. The numerical method used to compute the snapshots for the grad-div-ROM \eqref{eq:pod_method2}-\eqref{eq:pres} is the grad-div-FEM \eqref{eq:gal_grad_div} described in Section \ref{sec:grad-div-ROM}, with a spatial discretization using the mixed inf-sup stable ${\bf P}^{2}-\mathbb{P}^1$ Taylor--Hood FE for the pair velocity-pressure on the same relatively coarse computational grid in Figure \ref{fig:Mesh}, resulting in $32\,488$ d.o.f. for velocities and $4\,151$ d.o.f. for pressure. For the LPS-FEM \eqref{eq:gal_est}, we have used the following expressions for the stabilization parameters: $\tau_{\nu,K}=C_{v}h_{K}$ and $\tau_{p,K}=C_{p}h_{K}$, with $C_{v}=10^{-2}\,\rm{m/s}$ and $C_{p}=10^{-2}\,\rm{m^{-1}s}$, respectively. The expression for $\tau_{\nu,K}$ has been taken from \cite{nos_lps} and in view of conditions \eqref{eq:tau_p2}-\eqref{eq:mu_2} we have chosen a similar expression for $\tau_{p,K}$. In this case, we have $\tau_{\nu,K}\leq 2.76\cdot 10^{-4}\,\rm{m^2/s}$ and $\tau_{p,K}\leq 2.76\cdot 10^{-4}\,\rm{s}$. Following again \cite{nos_lps}, for the grad-div-FEM \eqref{eq:gal_grad_div} we have considered $\mu=\mu_{K}=C_{d}h_{K}$ with $C_{d}=1\,\rm{m/s}$, so that we have $\mu=\mu_{K}\leq 2.76\cdot 10^{-2}\,\rm{m^{2}/s}$.

For the time discretization, in both cases a semi-implicit Backward Differentiation Formula of order two (BDF2) has been applied, which guarantees a good balance between numerical accuracy and computational complexity \cite{AhmedRubino19}. In particular, we have considered an extrapolation for the convection velocity by means of Newton--Gregory backward polynomials \cite{Cellier91}. Without entering into the details of the derivation, for which we refer the reader to e.g. \cite{Cellier91}, we consider the following extrapolation of order two for the discrete velocity: $\widehat{\uv}_{h}^{n}=2\uhv^{n}-\uhv^{n-1}$, $n\geq 1$, in order to achieve a second-order accuracy in time. For the initialization $(n=0)$, we have considered $\uhv^{-1}=\uhv^{0}=\uv_{0h}$, being $\uv_{0h}$ the initial condition, so that the time scheme reduces to the semi-implicit Euler method for the first time step $(\Delta t)^{0}=(2/3)\Delta t$. In both FOM, an impulsive start is performed, i.e. the initial condition is a zero velocity field, and the time step is $\Delta t = 2\times 10^{-3}\,\rm{s}$. Time integration is performed till a final time $T=7\,\rm{s}$. In the time period $[0,5]\,\rm s$, after an initial spin-up, the flow is expected to develop to full extent, including a subsequent relaxation time. Afterwards, it reaches a periodic-in-time (statistically- or quasi-steady) state.

In Figure \ref{fig:QOIFOM1} (left), we plot drag coefficient temporal evolution for the FOM solution computed with LPS-FEM \eqref{eq:gal_est} and grad-div-FEM \eqref{eq:gal_grad_div}. Results are very close and, despite the relatively coarse computational grid used, they agree quite well with reference values from \cite{SchaferTurek96}.
\begin{figure}[htb]
\begin{center}
\includegraphics[width=2.5in]{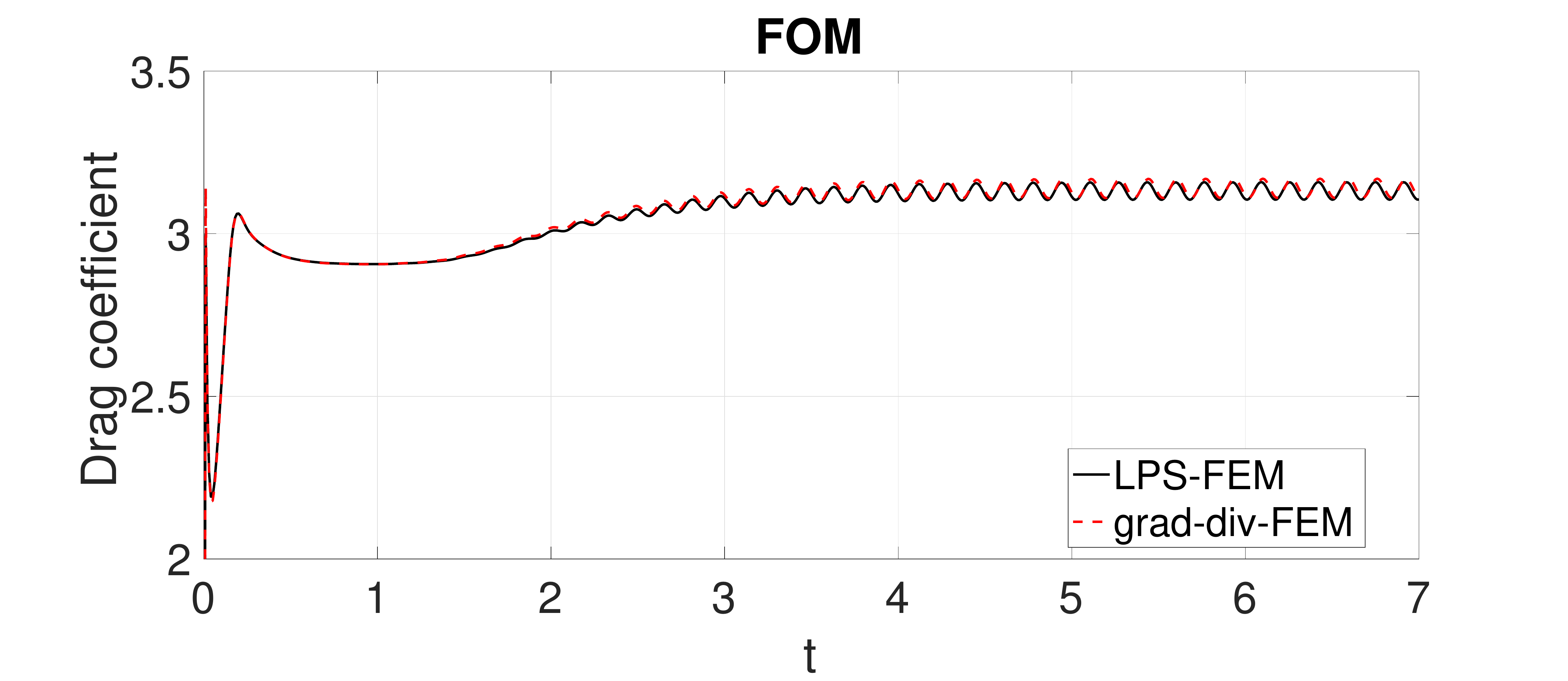}
\includegraphics[width=2.5in]{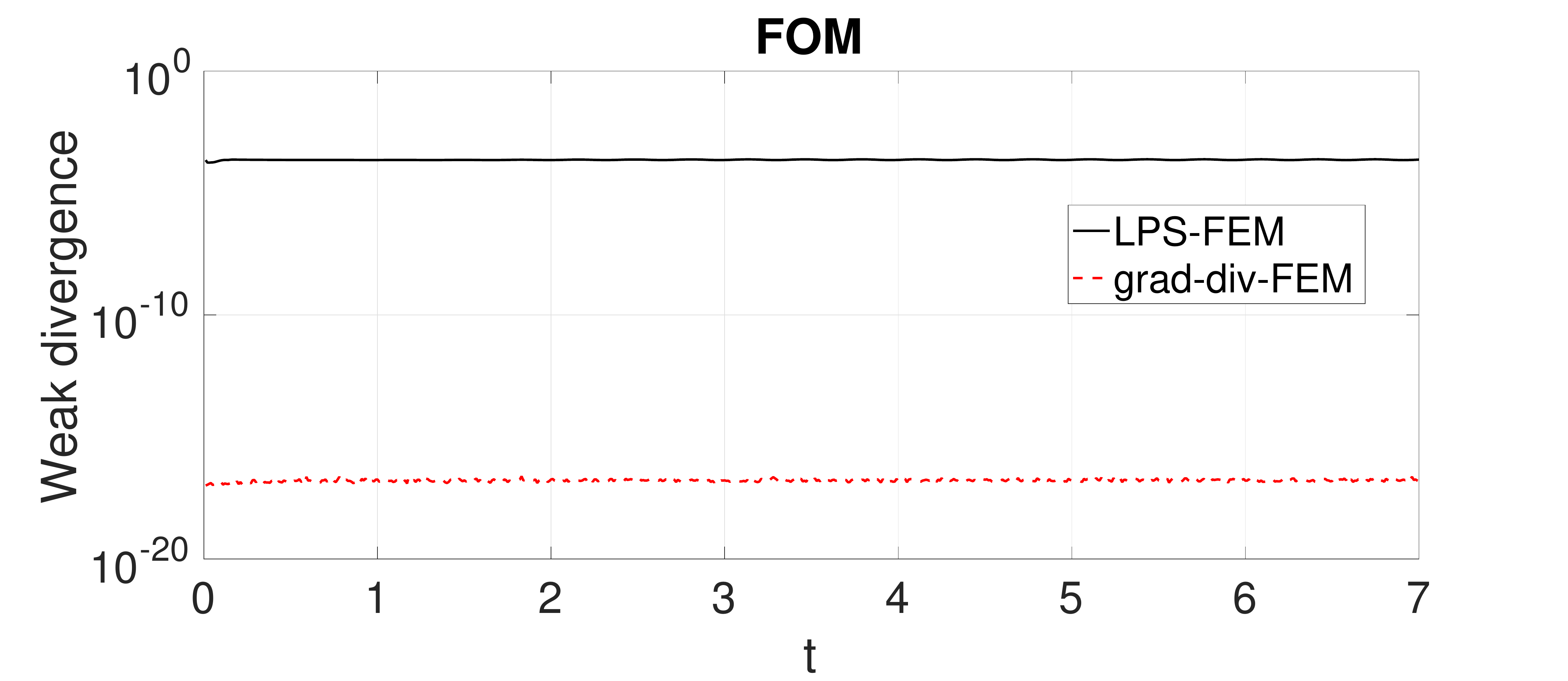}
\caption{Temporal evolution of drag coefficient (left) and weak divergence (right) 
for the FOM solution computed with LPS-FEM \eqref{eq:gal_est} and grad-div-FEM \eqref{eq:gal_grad_div}.}\label{fig:QOIFOM1}
\end{center}
\end{figure}
 
In Figure \ref{fig:QOIFOM1} (right), we show the weak divergence temporal evolution for the FOM solution computed with LPS-FEM \eqref{eq:gal_est} and grad-div-FEM \eqref{eq:gal_grad_div}, obtained by plotting $\max_{q_{h}\in Q_h} |(\nabla\cdot \uhv^n,q_h)|$, with $q_h$ the FE test functions varying in $Q_h=\mathbb{P}^2$ for LPS-FEM \eqref{eq:gal_est} and in $Q_h=\mathbb{P}^1$ for grad-div-FEM \eqref{eq:gal_grad_div}. From this figure, it is evident that the snapshots computed with LPS-FEM \eqref{eq:gal_est} are no weakly divergence-free, while the snapshots computed with grad-div-FEM \eqref{eq:gal_grad_div} are weakly divergence-free (up to machine precision).
%

The POD modes are generated in $L^2$ by the method of snapshots with velocity centered-trajectories \cite{IliescuJohn15} by storing every FOM solution from $t=5$, when the solution had reached a periodic-in-time state, and using one period of snapshot data. The full period length of the statistically steady state is $0.332\,\rm{s}$, thus we collect $167$ snapshots for both velocity and pressure. The rank of the velocity data set is $d_{v}=25$ for LPS-FEM \eqref{eq:gal_est} and $d_{v}=27$ for grad-div-FEM \eqref{eq:gal_grad_div} (for which $\lambda_k<10^{-10}, k>d_{v}$), while the rank of the pressure data set is $d_{p}=23$ for both LPS-FEM \eqref{eq:gal_est} and grad-div-FEM \eqref{eq:gal_grad_div} (for which $\gamma_k<10^{-10}, k>d_{p}$). 
Figure \ref{fig:PODstat} shows the decay of POD velocity ($\lambda_k$, $k=1,\ldots, d_v$) and pressure ($\gamma_k$, $k=1,\ldots, d_p$) eigenvalues (left) computed with LPS-FEM \eqref{eq:gal_est} and grad-div-FEM \eqref{eq:gal_grad_div}, together with the corresponding captured system's energy (right), given by $100\sum_{k=1}^{r}\lambda_{k}/\sum_{k=1}^{d_v}\lambda_{k}$ for velocity and $100\sum_{k=1}^{r}\gamma_{k}/\sum_{k=1}^{d_p}\gamma_{k}$ for pressure. Note that the first $r=5$ POD modes already capture more than 99\% of the system's velocity-pressure energy.

\begin{figure}[htb]
\begin{center}
\includegraphics[width=2.5in]{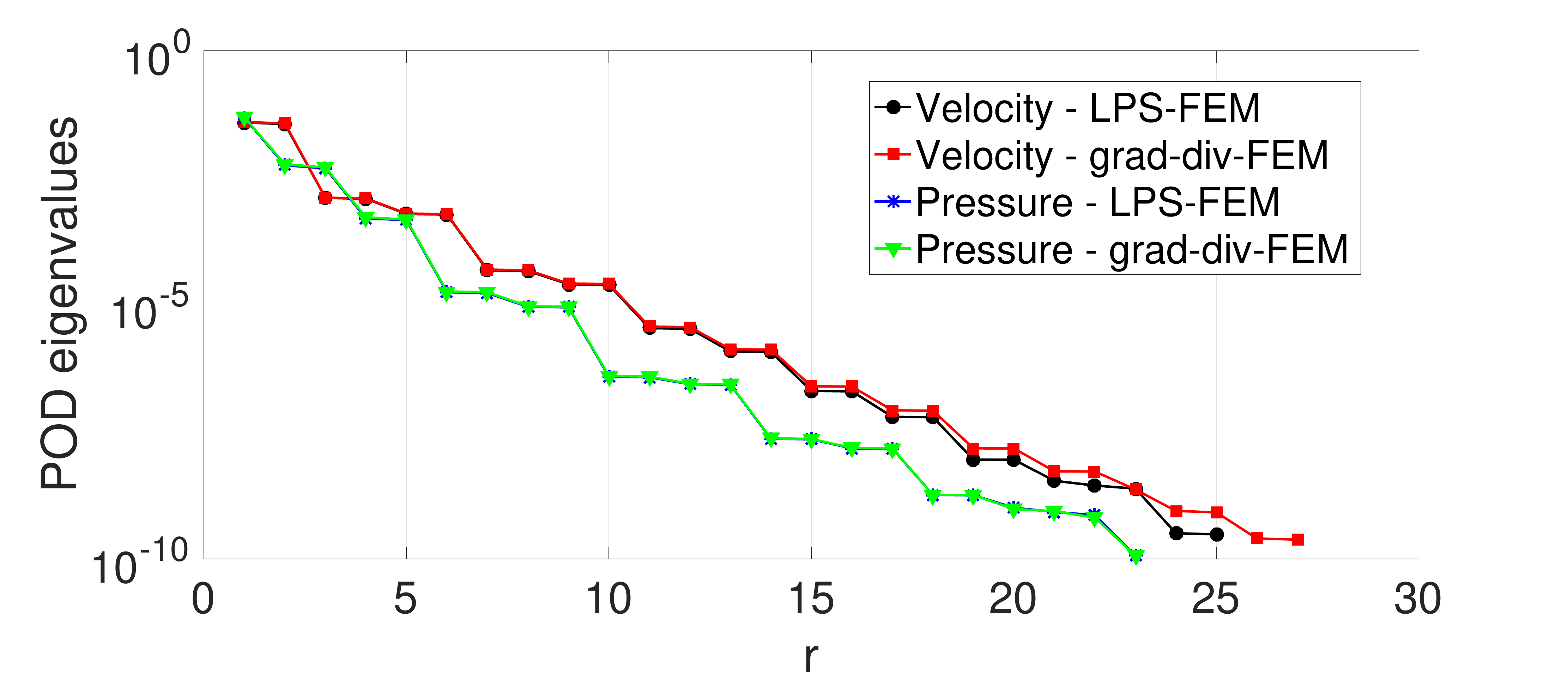}
\includegraphics[width=2.5in]{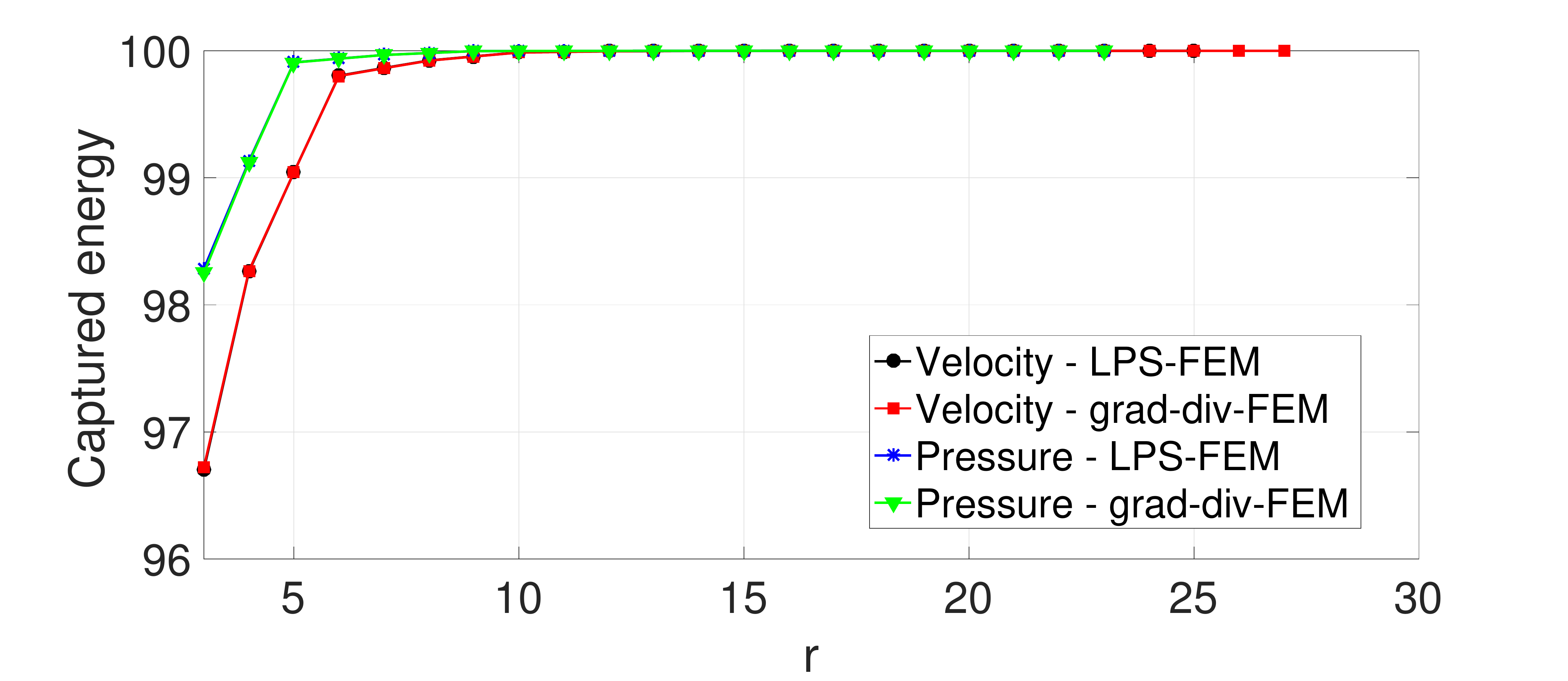}
\caption{POD velocity-pressure eigenvalues (left) and captured system's velocity-pressure energy (right) 
computed with LPS-FEM \eqref{eq:gal_est} and grad-div-FEM \eqref{eq:gal_grad_div}.}\label{fig:PODstat}
\end{center}
\end{figure}

With POD modes generated, the fully discrete LPS-ROM \eqref{eq:pod_method1} and grad-div-ROM \eqref{eq:pod_method2}-\eqref{eq:pres} are constructed as discussed in the previous sections, using the semi-implicit BDF2 time scheme as for the corresponding FOM, and run first in the time interval $[5,7]\,\rm{s}$ with $\Delta t=2\times 10^{-3}\,\rm{s}$ and a small number ($r=8$) of POD velocity-pressure modes, which already gives a reasonable accuracy for the proposed methods. The initial reduced-order velocity is given by the $L^2$-orthogonal projection of the corresponding velocity snapshot at $t=5\,\rm{s}$ on the respective POD velocity space. Note that to recover the online pressure for the grad-div-ROM \eqref{eq:pod_method2}-\eqref{eq:pres}, the same number ($r=8$) of POD supremizers modes has been used. Also, for the LPS-ROM \eqref{eq:pod_method1}, we have considered the same stabilization parameters $\tau_{\nu,K}, \tau_{p,K}$ used in the LPS-FEM \eqref{eq:gal_est} computation. However, a different treatment has been done to the online grad-div stabilization parameter $\mu$. Indeed, we have found convenient to consider an adaptive in time algorithm for the online grad-div parameter $\mu$ used both in the LPS-ROM \eqref{eq:pod_method1} and the grad-div-ROM \eqref{eq:pod_method2}-\eqref{eq:pres}, by adjusting dissipation arising from the online grad-div stabilization term in order to better match the FOM energy, which demonstrated to significantly improve the long time ROM accuracy.

\medskip 

{\em Adaptive in time algorithm for online grad-div parameter $\mu$.}
Hereafter, we describe the adaptive in time algorithm proposed and numerically investigated for the computation of the grad-div parameter $\mu$ in the online phase. This algorithm takes inspiration from the one used in \cite{zerfas_et_al} to compute the nudging parameter in order to further improve the long time accuracy of a data assimilation ROM. To our knowledge, this is the first time it is applied in a grad-div stabilization ROM framework. Instead of considering the same grad-div parameter $\mu=\mu_{K}$ used in the grad-div-FEM \eqref{eq:gal_grad_div}, we propose to perform a comparison within a constant and an adaptive in time $\mu$, based on the accuracy of the energy prediction of both the LPS-ROM \eqref{eq:pod_method1} and the grad-div-ROM \eqref{eq:pod_method2}-\eqref{eq:pres}. The adaptive in time strategy consists in adjusting $\mu$ so that the contribution of the online grad-div stabilization term removes dissipation if the ROM energy is too small, and adds dissipation if the energy is too large with respect to the FOM energy. First of all, we run the ROM for a constant $\mu=\bar{\mu}$, fixed minimizing the $L^{\infty}$ error in time with respect to the snapshots energy computed in one period and then repeated in the rest of periods, thus being the snapshots data to construct the reduced basis sufficient to compute the constant $\bar{\mu}$, and no further information is needed. The same holds for the adaptive in time algorithm detailed below.

\medskip

{\bf Algorithm} {\em (ROM with adaptive in time grad-div parameter $\mu$)}.
\begin{enumerate}
\item Initialize the online grad-div parameter $\mu=\bar{\mu}$.
\item Set $\mu_{min}>0$ the minimum value that $\mu$ can reach in the algorithm. 
\item Set $F$ to be the frequency in number of time steps to adapt $\mu$.
\item Set $\delta$ to be the adjustment size to change $\mu$.
\item Set $tol$ to be the tolerance chosen for making a change to $\mu$.
\item For time step $n=1,2,\ldots$

if $mod(n,F)==0$
\begin{itemize}
\item Compute $E_{kin}^{diff}=\displaystyle\frac{1}{2}\left(\nor{\uv_{r}^{n}}{{\bf L}^2}^2 - \nor{\uv_{h}^{mod(n,M)}}{{\bf L}^2}^2\right)$, where $M$ is the number of snapshots collected.
\item if $E_{kin}^{diff}>tol$, set $\mu=\max\{\mu_{min},\mu+\delta\}$, 

else if $E_{kin}^{diff}<-tol$, set $\mu=\max\{\mu_{min},\mu-\delta\}$. 
\end{itemize}
end

Recompute $\uv_{r}^{n}$.
\end{enumerate}

\medskip

{\em Numerical results.}
To assess the numerical accuracy of the proposed methods LPS-ROM \eqref{eq:pod_method1} and grad-div-ROM \eqref{eq:pod_method2}-\eqref{eq:pres}, the temporal evolution of the drag and lift coefficients, and kinetic energy are monitored and compared to the corresponding FOM solutions in the time interval $[5,7]\,\rm{s}$, corresponding to six period for the lift coefficients. Thus, we are actually testing the ability of the considered ROM to predict/extrapolate in time, monitoring their performance over a six times larger time interval with respect to the one used to compute the snapshots and generate the POD modes. In particular, for both methods, we perform a comparison within a constant and an adaptive in time $\mu$, following the strategy described above.

We start by reporting numerical results for the LPS-ROM \eqref{eq:pod_method1}. Numerical results for drag and lift predictions using $r=8$ velocity-pressure modes are shown in Figure \ref{fig:QOIPOD1LPS}, where we display a comparison within LPS-FEM \eqref{eq:gal_est} and LPS-ROM \eqref{eq:pod_method1} with constant $\mu=\bar{\mu}=2.4$ and adaptive $\mu$ (starting $\mu=\bar{\mu}=2.4$, $\mu_{min}=10^{-1}$, $F=5$, $\delta=10^{-1}$, $tol=10^{-3}$). From this figure, we observe that the LPS-ROM \eqref{eq:pod_method1} allows to compute rather accurate quantities of interest. Indeed, the temporal evolution of the drag and lift coefficient is rather close to that of the LPS-FEM \eqref{eq:gal_est}, being the drag coefficient temporal evolution the most sensitive quantity presenting larger differences (up to $3.5\%$). Note that along the time interval $[5,7]\,\rm{s}$ results are almost similar using constant and adaptive $\mu$. In Figure \ref{fig:QOIPOD2LPS}, we show on the left the temporal evolution of absolute error in kinetic energy $|E_{kin,r}-E_{kin,h}|$, 
and on the right the corresponding temporal evolution of the adaptive grad-div coefficient $\mu$. Note that along the time interval $[5,7]\,\rm{s}$ the kinetic energy error levels are quite similar using constant and adaptive $\mu$, and in both cases they are maintained below $3\cdot 10^{-3}$. 

\begin{figure}[htb]
\begin{center}
\includegraphics[width=2.5in]{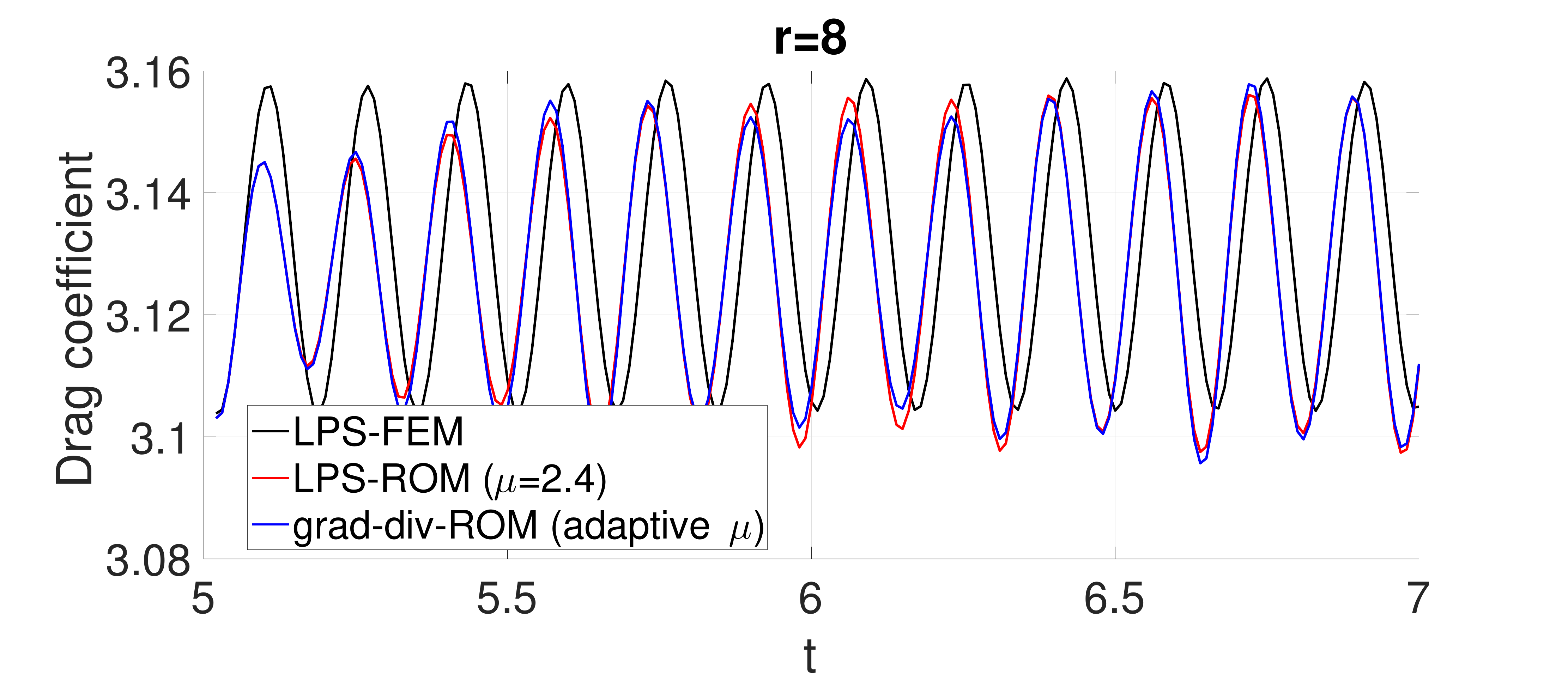}
\includegraphics[width=2.5in]{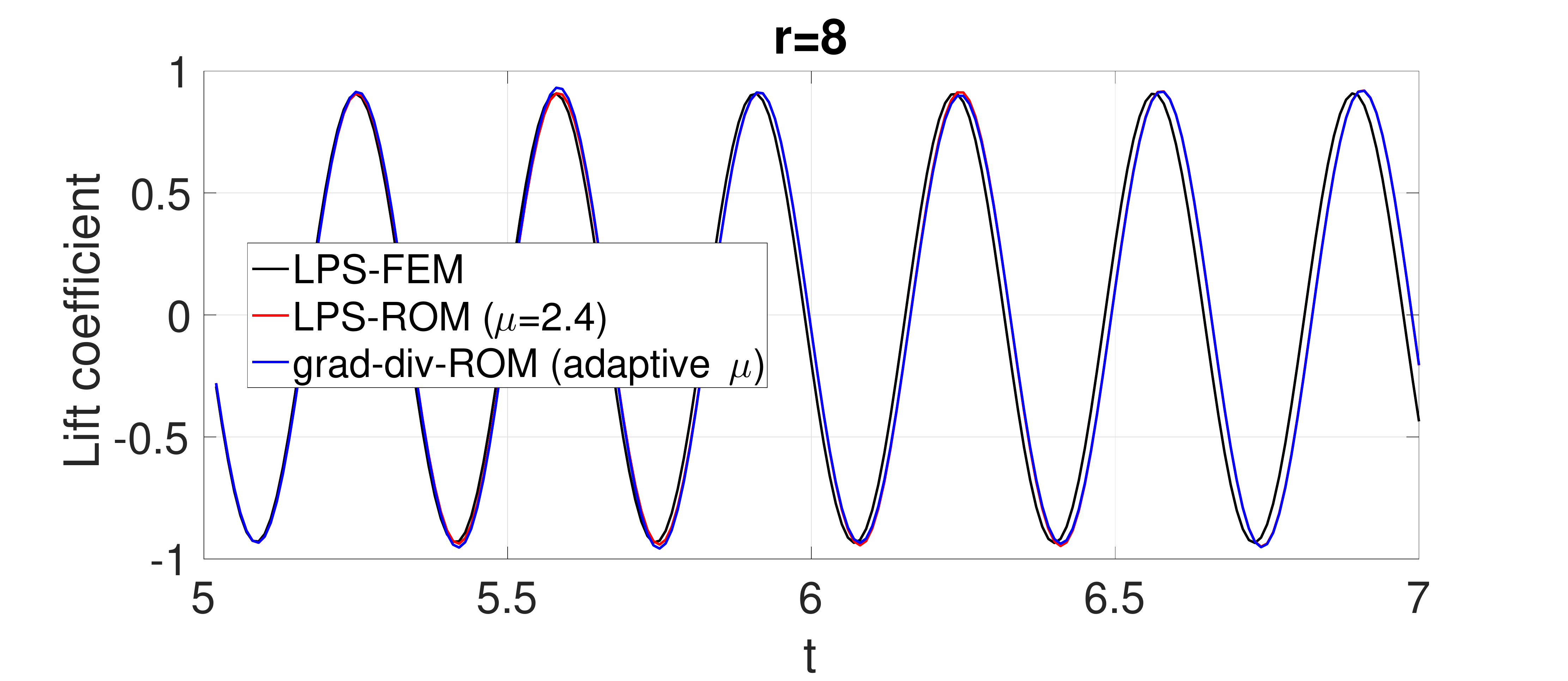}
\caption{Temporal evolution of drag coefficient (left) and lift coefficient (right) 
computed with LPS-ROM \eqref{eq:pod_method1} with constant and adaptive $\mu$ 
using $r=8$ velocity-pressure modes, and comparison with LPS-FEM \eqref{eq:gal_est}.}\label{fig:QOIPOD1LPS}
\end{center}
\end{figure}
%

\begin{figure}[htb]
\begin{center}
\includegraphics[width=2.5in]{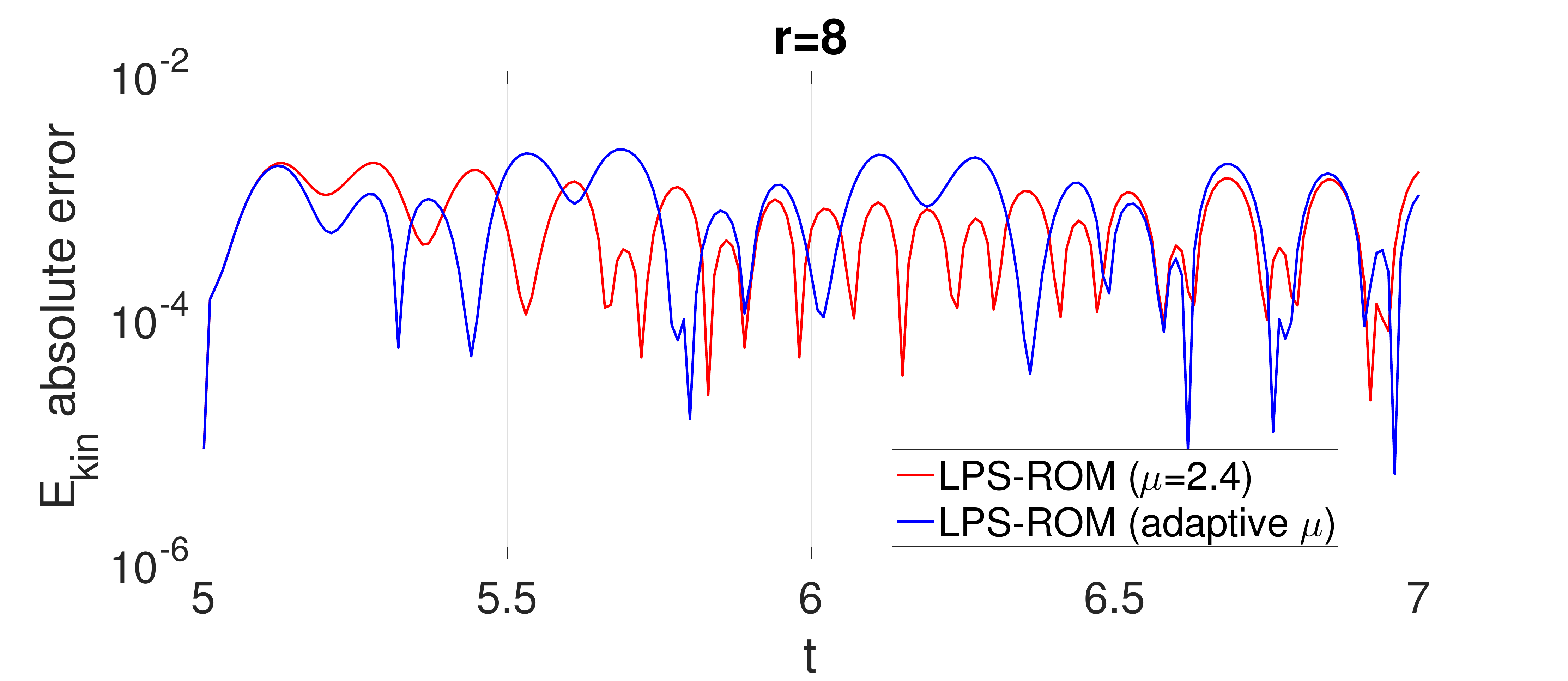}
\includegraphics[width=2.5in]{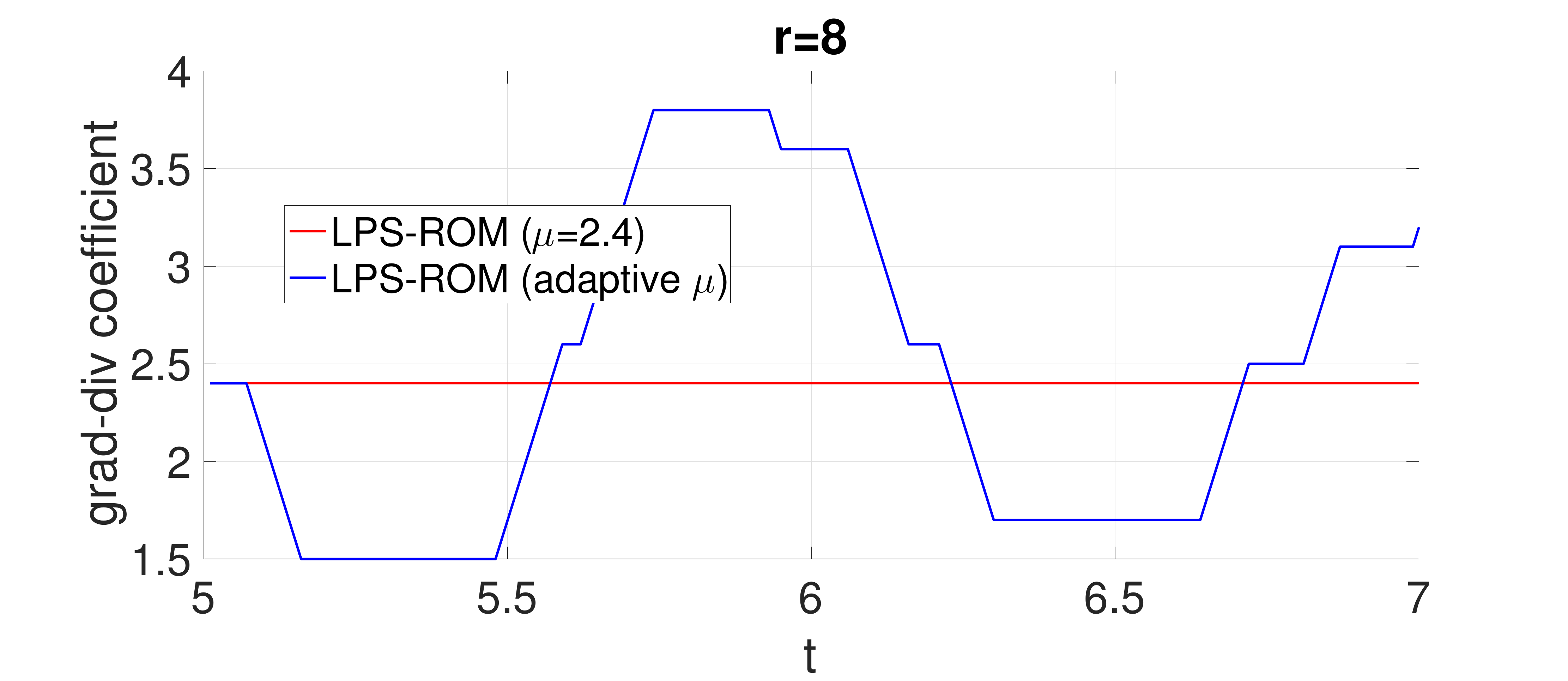}
\caption{Temporal evolution of absolute error in kinetic energy with respect to LPS-FEM \eqref{eq:gal_est} on the left and grad-div coefficient $\mu$ (constant and adaptive) on the right computed with LPS-ROM \eqref{eq:pod_method1} 
using $r=8$ velocity-pressure modes.}\label{fig:QOIPOD2LPS}
\end{center}
\end{figure}
%

%
To better assess on the one hand the behavior of the proposed LPS-ROM \eqref{eq:pod_method1} and partially illustrate on the other hand the theoretical convergence order predicted by the numerical analysis performed in Section \ref{sec:LPS-ROM}, we plot the discrete $\ell^2([5,7];L^2(\Om))$ squared errors in velocity and pressure with respect to the LPS-FEM \eqref{eq:gal_est} solution. In particular, in Figure \ref{fig:QOIPOD4LPS} we show the errors with respect to the LPS-FEM \eqref{eq:gal_est} solution computed with LPS-ROM \eqref{eq:pod_method1} with adaptive $\mu$ using $r$ velocity-pressure modes. The theoretical analysis proved that, for sufficiently small $h$ and $\Delta t$ as it is the case, the velocity error should scales as $\Lambda_r + Z_r = \sum_{k=r+1}^{d_v}\lambda_k+\sum_{k=r+1}^{d_p}\gamma_k$ (see error bound \eqref{eq:cota_finalLPS}), and this is quantitatively recovered in Figure \ref{fig:QOIPOD4LPS} for both reduced order velocity and pressure and a small number (up to $r=12$) of POD velocity-pressure modes. Following the hints given by the error bound \eqref{eq:cota_finalLPS}, for the current setup for which $S_2^v=\|S^v\|_2 = 1.94\cdot 10^2$ and $S_2^p=\|S^p\|_2 = 1.79\cdot 10^2$ (so that $h\,S_2^p={\cal O}(1)$), we actually found that $S_2^v \Lambda_r + Z_r$ is a good a priori error indicator for both reduced order velocity and pressure (at least for small $r$). However, for $r\geq 12$, we observe a flattening effect due to the fact that the time interval $[5,7]\,\rm{s}$ is already quite large with respect to the time period used to generate the POD basis, so that although we increase the number of POD modes (over $r=12$), we do not notice so much the error decrease in both velocity and pressure. 
Although for pressure we are not able in this case to theoretically prove error estimates in such a strong norm as for velocity (see Remark \ref{rm:PresErrEstLPS} and \cite{samuele_pod}), numerically we have recovered it. 

\begin{figure}[htb]
\begin{center}
\includegraphics[width=5.25in]{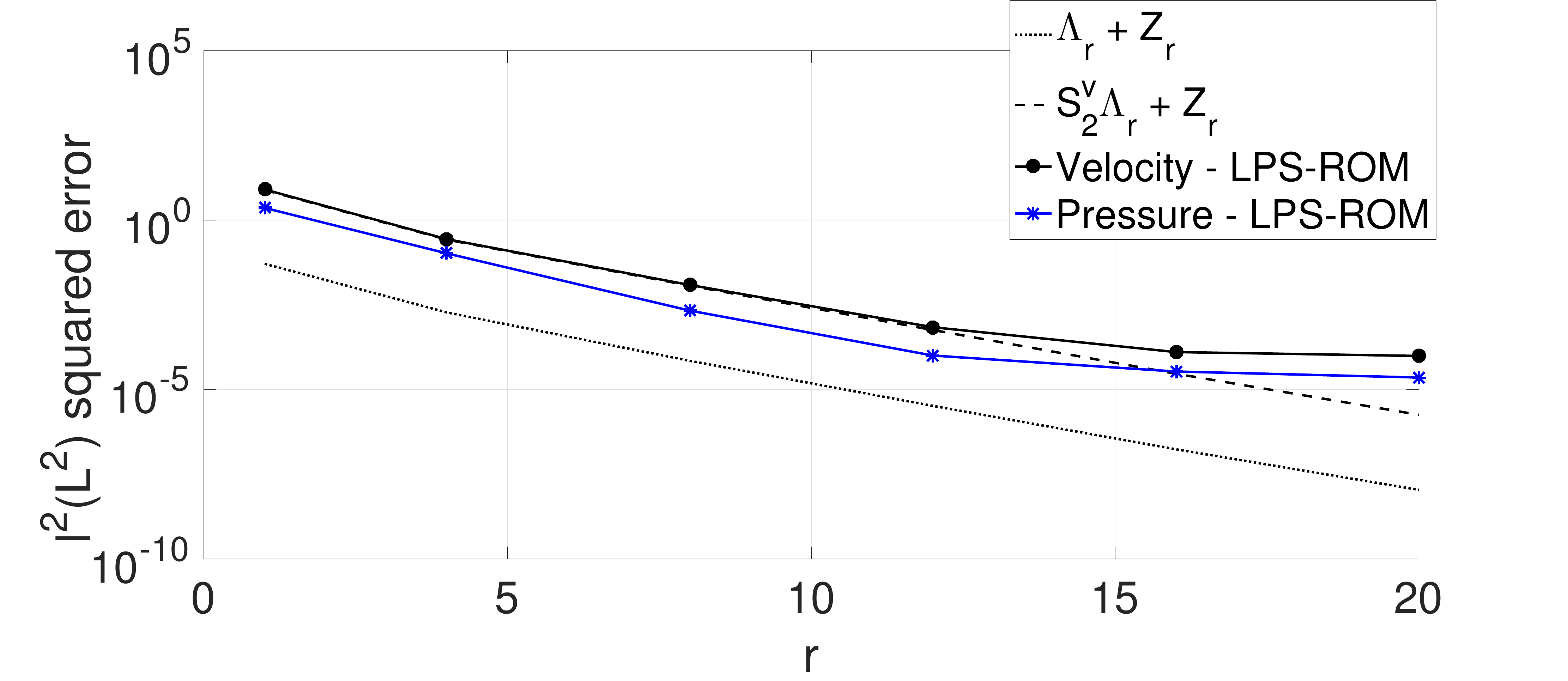}
\caption{Discrete $\ell^2(L^2)$ squared error in velocity and pressure with respect to LPS-FEM \eqref{eq:gal_est} computed with LPS-ROM \eqref{eq:pod_method1} with adaptive $\mu$ 
using $r$ velocity-pressure modes.}\label{fig:QOIPOD4LPS}
\end{center}
\end{figure}
%

We now report numerical results for the grad-div-ROM \eqref{eq:pod_method2}-\eqref{eq:pres}. Numerical results for drag and lift predictions using $r=8$ velocity-pressure modes are shown in Figure \ref{fig:QOIPOD1SUP}, where we display a comparison within grad-div-FEM \eqref{eq:gal_grad_div} and grad-div-ROM \eqref{eq:pod_method2}-\eqref{eq:pres} with constant $\mu=\bar{\mu}=3.7$ and adaptive $\mu$ (starting $\mu=\bar{\mu}=3.7$, $\mu_{min}=10^{-1}$, $F=5$, $\delta=10^{-1}$, $tol=10^{-3}$). In this case, the same number ($r=8$) of POD supremizers modes has been used to recover the online pressure. From this figure, we observe that the grad-div-ROM \eqref{eq:pod_method2}-\eqref{eq:pres} is less accurate when compared with the corresponding FOM (i.e., the grad-div-FEM \eqref{eq:gal_grad_div}) with respect to the LPS-ROM \eqref{eq:pod_method1} (see Figure \ref{fig:QOIPOD1LPS}). Indeed, the temporal evolution of the drag coefficient presents quite large differences (up to $12\%$) with respect to the grad-div-FEM \eqref{eq:gal_grad_div}, while remaining the lift coefficient temporal evolution rather close. Note that also in this case results are almost similar using constant and adaptive $\mu$ along the time interval $[5,7]\,\rm{s}$. In Figure \ref{fig:QOIPOD2SUP}, we show on the left the temporal evolution of absolute error in kinetic energy $|E_{kin,r}-E_{kin,h}|$, 
and on the right the corresponding temporal evolution of the adaptive grad-div coefficient $\mu$. Note that along the time interval $[5,7]\,\rm{s}$ the kinetic energy error levels are quite similar using constant and adaptive $\mu$, and in both cases are maintained below $2\cdot 10^{-3}$. We also notice that the values assumed by the online grad-div coefficient $\mu$ in order to better match the FOM energy are much larger than the offline grad-div-FEM coefficient $\mu=\mu_{K}$, whose maximum value is $2.76\cdot 10^{-2}\rm{m^2/s}$. 

\begin{figure}[htb]
\begin{center}
\includegraphics[width=2.5in]{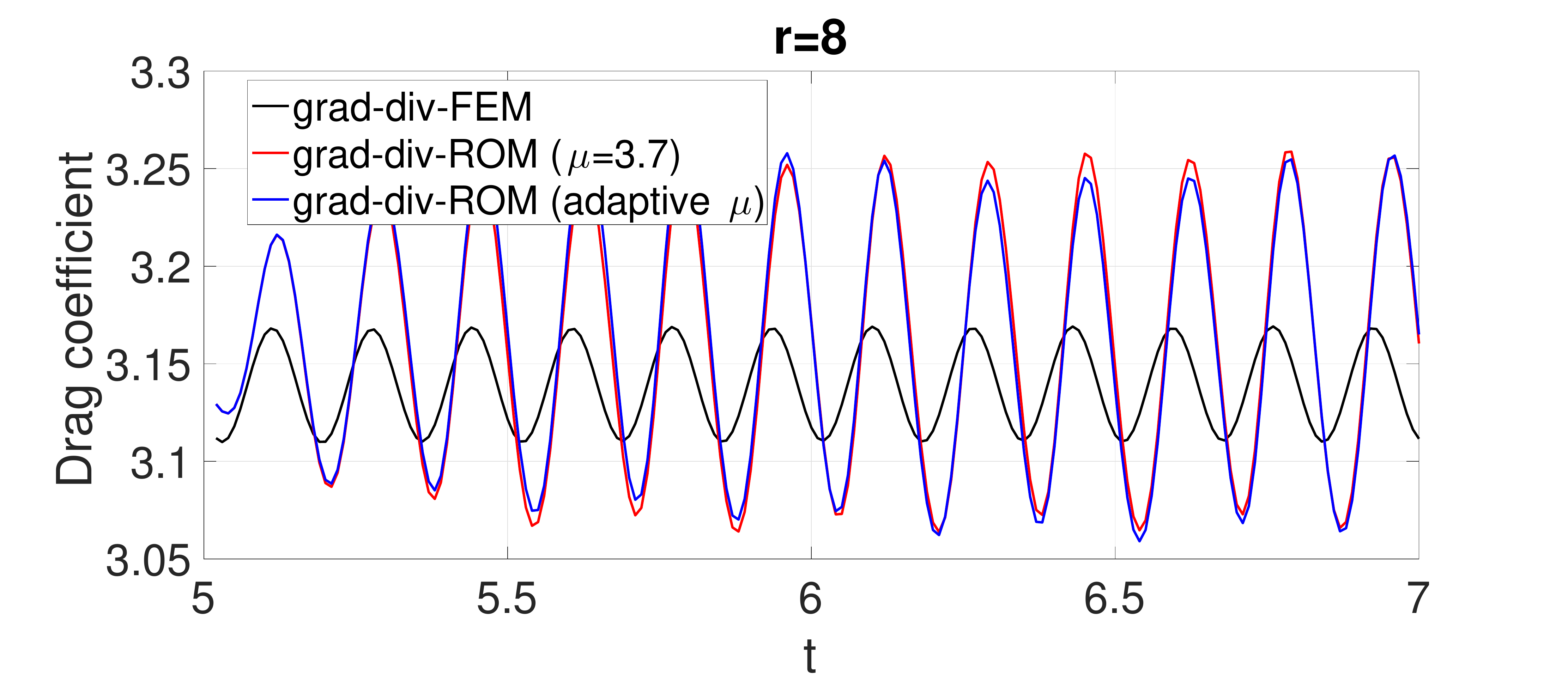}
\includegraphics[width=2.5in]{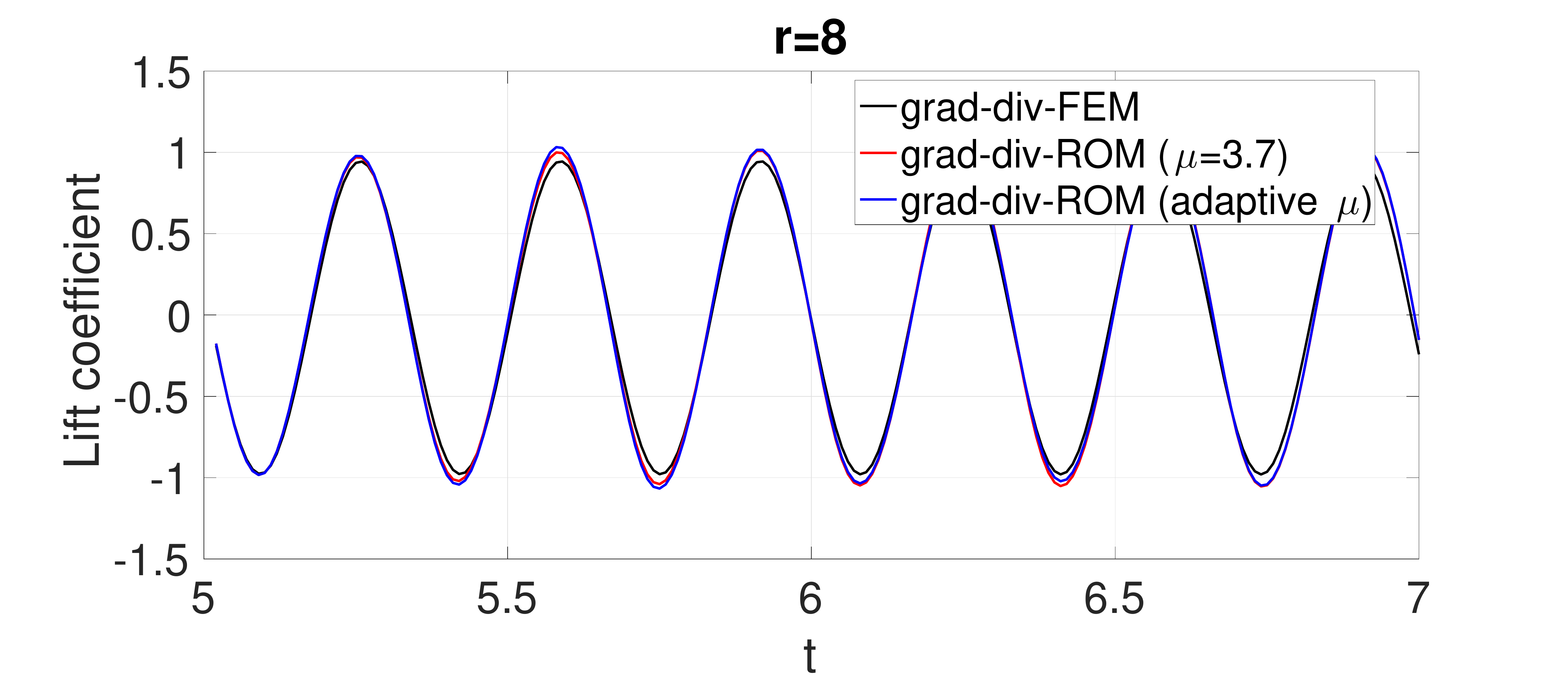}
\caption{Temporal evolution of drag coefficient (left) and lift coefficient (right) 
computed with grad-div-ROM \eqref{eq:pod_method2}-\eqref{eq:pres} with constant and adaptive $\mu$ using $r=8$ velocity-pressure modes, and comparison with grad-div-FEM \eqref{eq:gal_grad_div}.}\label{fig:QOIPOD1SUP}
\end{center}
\end{figure}

\begin{figure}[htb]
\begin{center}
\includegraphics[width=2.5in]{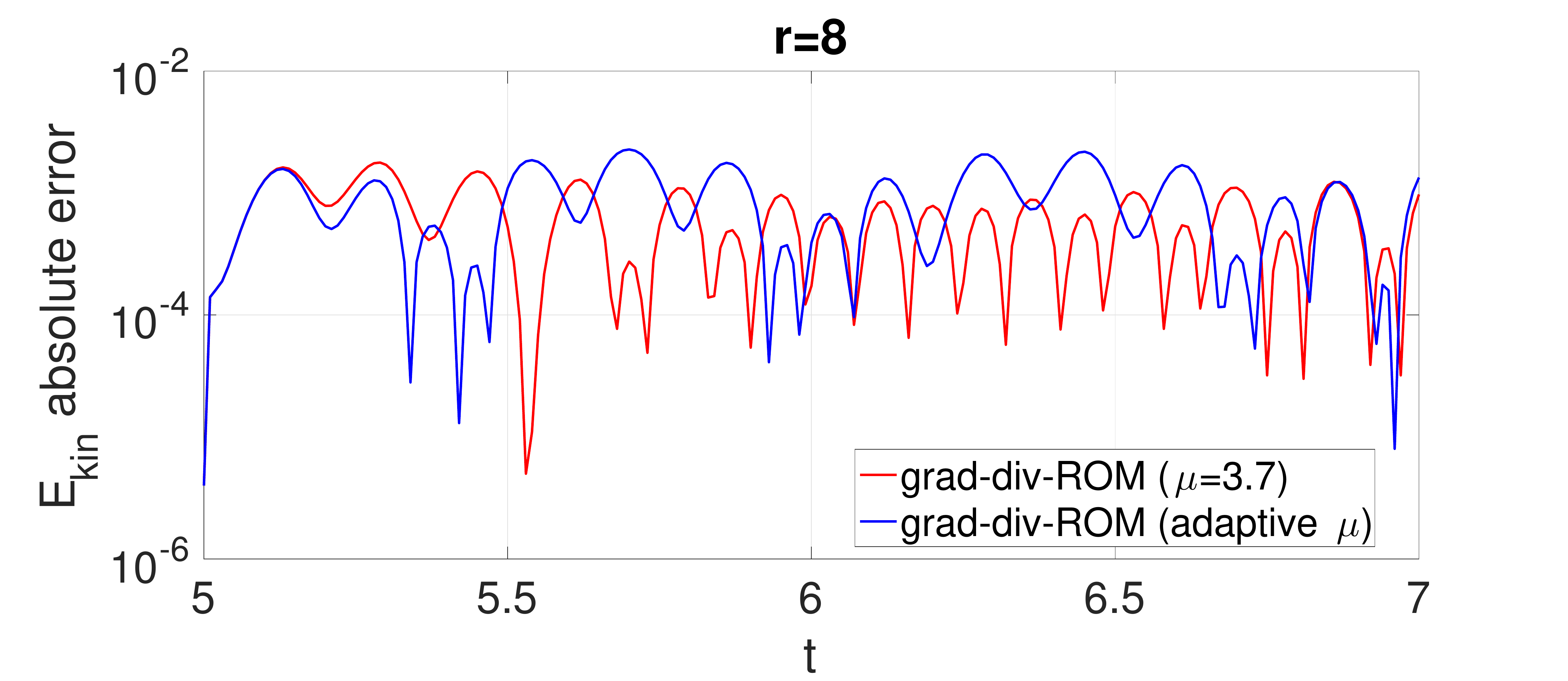}
\includegraphics[width=2.5in]{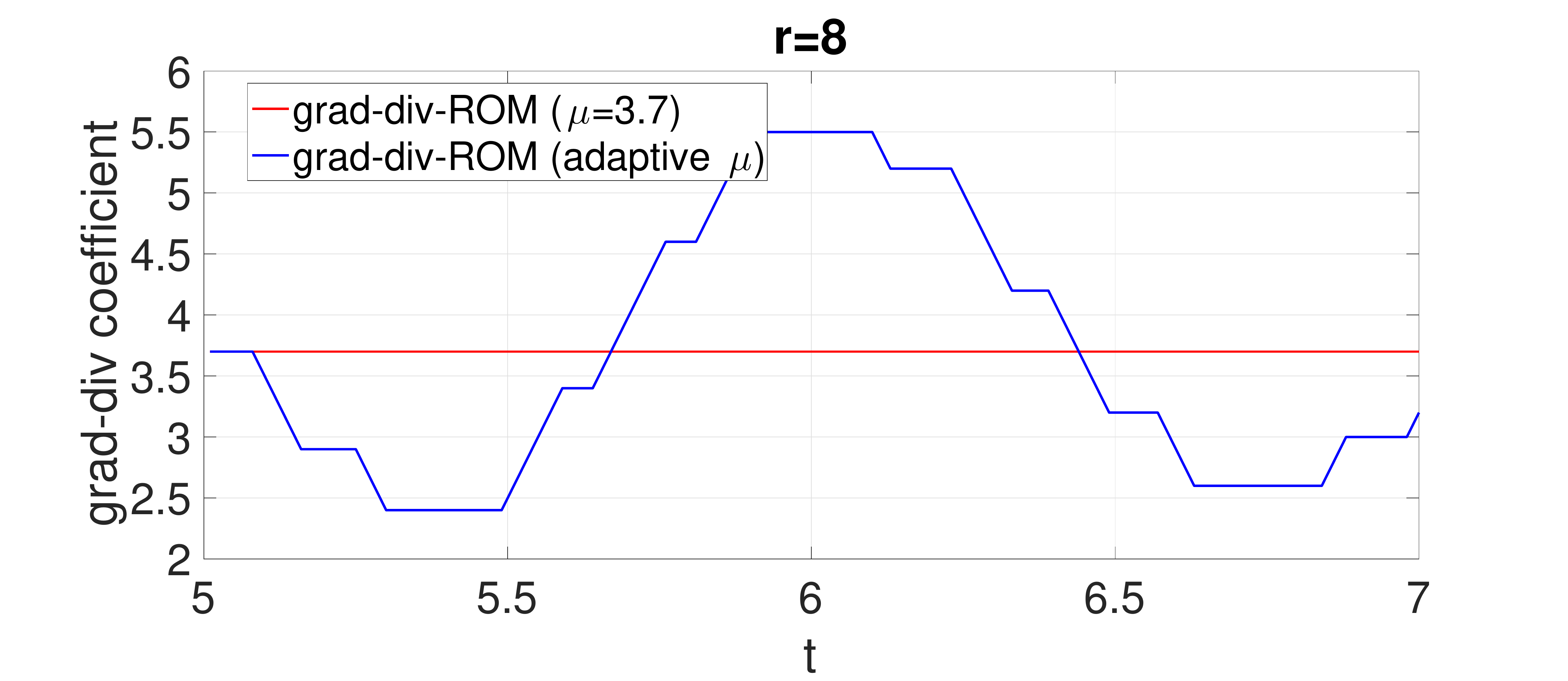}
\caption{Temporal evolution of absolute error in kinetic energy with respect to grad-div-FEM \eqref{eq:gal_grad_div} on the left and grad-div coefficient $\mu$ (constant and adaptive) on the right computed with grad-div-ROM \eqref{eq:pod_method2}-\eqref{eq:pres} using $r=8$ velocity-pressure modes.}\label{fig:QOIPOD2SUP}
\end{center}
\end{figure}
%

Also in this case, to better assess on the one hand the behavior of the proposed grad-div-ROM \eqref{eq:pod_method2}-\eqref{eq:pres} and partially illustrate on the other hand the theoretical convergence order predicted by the numerical analysis performed in Section \ref{sec:grad-div-ROM}, we plot the discrete $\ell^2([5,7];L^2(\Om))$ squared errors in velocity and pressure with respect to the grad-div-FEM \eqref{eq:gal_grad_div} solution. In particular, in Figures \ref{fig:QOIPOD4SUPv}-\ref{fig:QOIPOD4SUPp} we show the errors with respect to the grad-div-FEM \eqref{eq:gal_grad_div} solution computed with grad-div-ROM \eqref{eq:pod_method2}-\eqref{eq:pres} with adaptive $\mu$ using $r$ velocity-pressure modes. The theoretical analysis proved that, for sufficiently small $h$ and $\Delta t$ as it is the case, the velocity error should scales as $\Lambda_r$ (see error bound \eqref{eq:cota_finalSUPv}) and the pressure error as $\Lambda_r + Z_r$ (see error bound \eqref{eq:erpre4_ul}), and this is quantitatively recovered in Figures \ref{fig:QOIPOD4SUPv}-\ref{fig:QOIPOD4SUPp} for both reduced order velocity and pressure and a small number (up to $r=12$) of POD velocity-pressure (and supremizers) modes. Following the hints given by the error bounds \eqref{eq:cota_finalSUPv}, for the current setup for which $S_2^v=\|S^v\|_2 = 2.17\cdot 10^2$, we actually found that $S_2^v \Lambda_r$ is a good velocity error indicator, while following \eqref{eq:erpre4_ul}, we found that $\alpha C_r^{H^1} S_2^v \Lambda_r + Z_r$ is a good pressure error indicator (at least for small $r$). However, for $r\geq 12$, we observe again a flattening effect due to the fact that the time interval $[5,7]\,\rm{s}$ is already quite large with respect to the time period used to generate the POD basis, so that although we increase the number of POD modes (over $r=12$), we do not notice so much the error decrease in both velocity and pressure. 

\begin{figure}[htb]
\begin{center}
\includegraphics[width=5.25in]{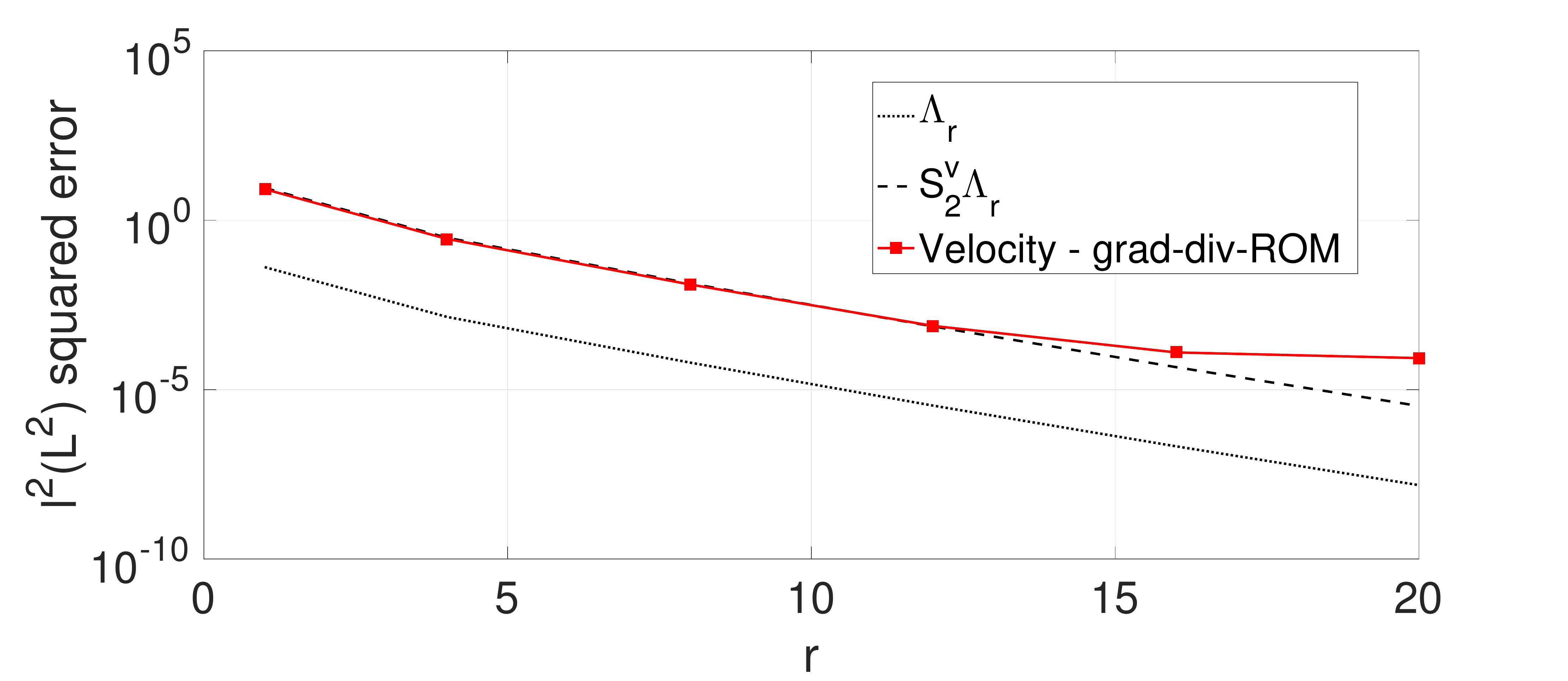}
\caption{Discrete $\ell^2(L^2)$ squared error in velocity with respect to grad-div-FEM \eqref{eq:gal_grad_div} computed with grad-div-ROM \eqref{eq:pod_method2}-\eqref{eq:pres} with adaptive $\mu$ 
using $r$ velocity-pressure modes.}\label{fig:QOIPOD4SUPv}
\end{center}
\end{figure}

\begin{figure}[htb]
\begin{center}
\includegraphics[width=5.25in]{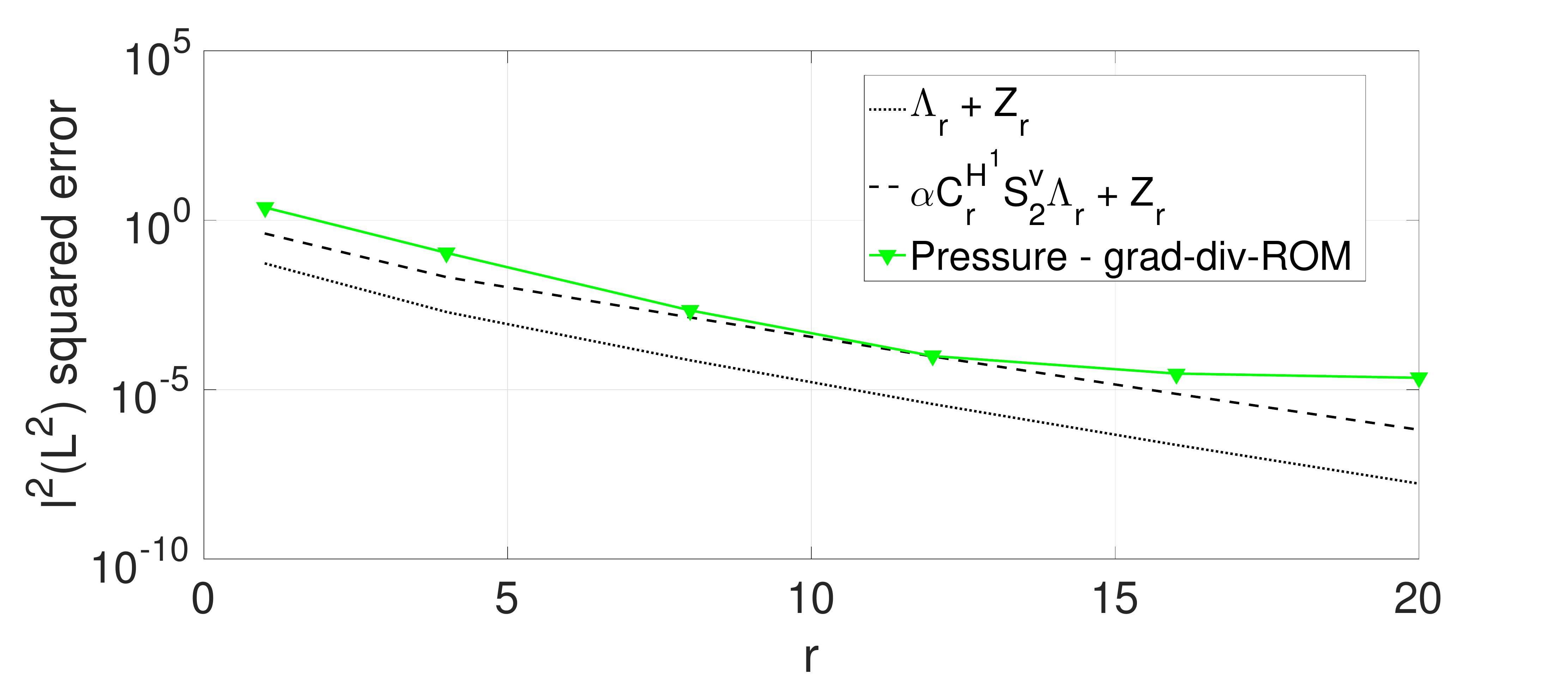}
\caption{Discrete $\ell^2(L^2)$ squared error in pressure with respect to grad-div-FEM \eqref{eq:gal_grad_div} computed with grad-div-ROM \eqref{eq:pod_method2}-\eqref{eq:pres} with adaptive $\mu$ 
using $r$ velocity-pressure modes.}\label{fig:QOIPOD4SUPp}
\end{center}
\end{figure}
%

\medskip

{\em Long time behavior.}
The aim of this section is to show how the strategy to adapt in time the online grad-div parameter can provide long time stability and accuracy, thus proving its robustness. To check the long time behavior, both the LPS-ROM \eqref{eq:pod_method1} and the grad-div-ROM \eqref{eq:pod_method2}-\eqref{eq:pres} are integrated in the time range $[5,20]\,\rm{s}$, which is forty-five times wider with respect to the time window used for the generation of the POD basis. 

The corresponding results for the LPS-ROM \eqref{eq:pod_method1} with constant and adaptive $\mu$ using $r = 8$ velocity-pressure modes are displayed in Figures \ref{fig:QOIPOD1longLPS}-\ref{fig:QOIPOD2longLPS}. In particular, in Figure \ref{fig:QOIPOD1longLPS} we monitor the temporal evolution of the drag and lift coefficients in the predictive time interval $[7,20]\,\rm{s}$. We observe that the LPS-ROM \eqref{eq:pod_method1} with adaptive $\mu$ remains stable and bounded, and gives reliable results for long time integration, since it rightly follows the trend given in the initial time range $[5,7]\,\rm{s}$ (see Figure \ref{fig:QOIPOD1LPS}), whereas maintaining a constant $\mu$ implies the oscillation amplitude (and of consequence the error) getting larger and larger as time increases. In Figure \ref{fig:QOIPOD2longLPS}, we show the long time evolution of kinetic energy (left) and the corresponding long time evolution of the adaptive grad-div coefficient $\mu$ (right). We observe that the adaptive strategy has a positive effect also on the long time energy evolution, causing the adaptive LPS-ROM \eqref{eq:pod_method1} energy to oscillate but remaining stable and rightly bounded over long time intervals, whereas maintaining a constant $\mu$ implies a significant inaccurate increase of the energy.

\begin{figure}[htb]
\begin{center}
\includegraphics[width=2.5in]{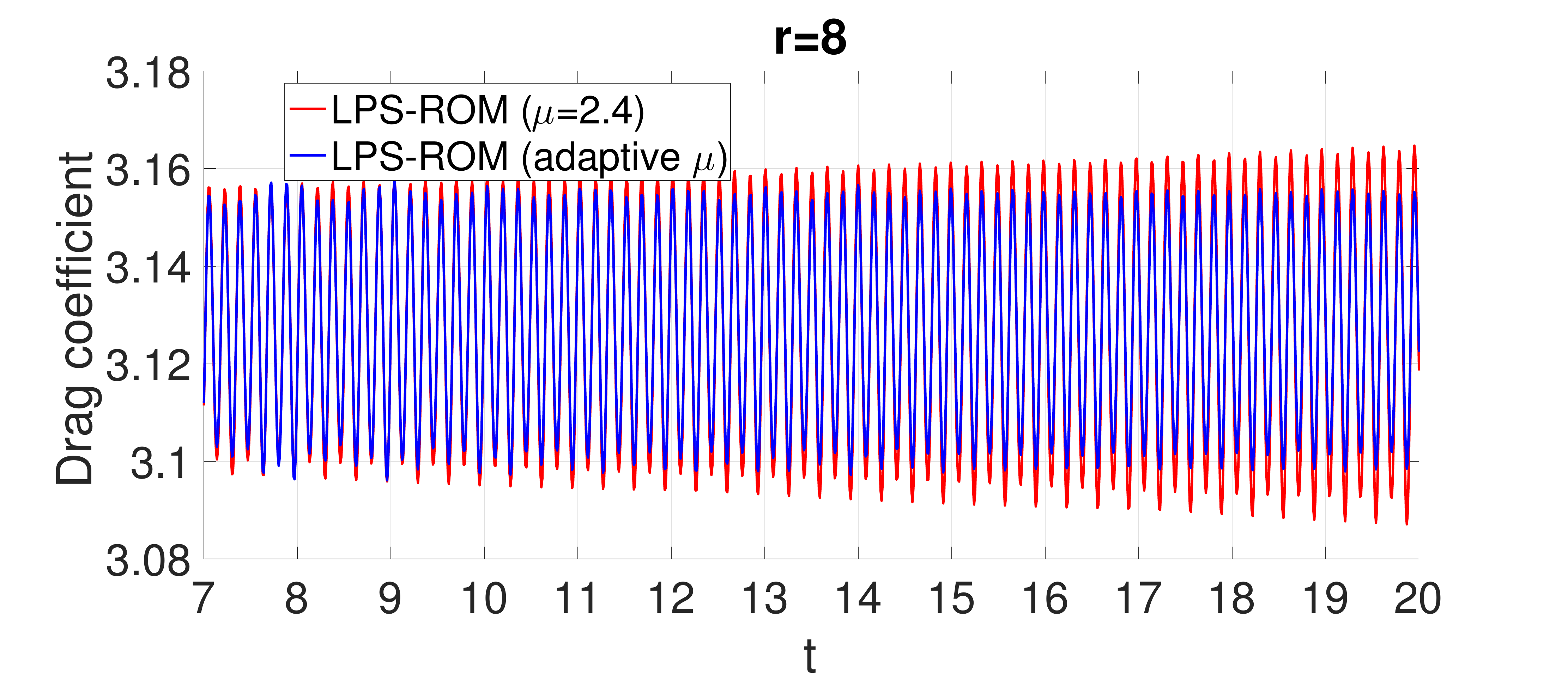}
\includegraphics[width=2.5in]{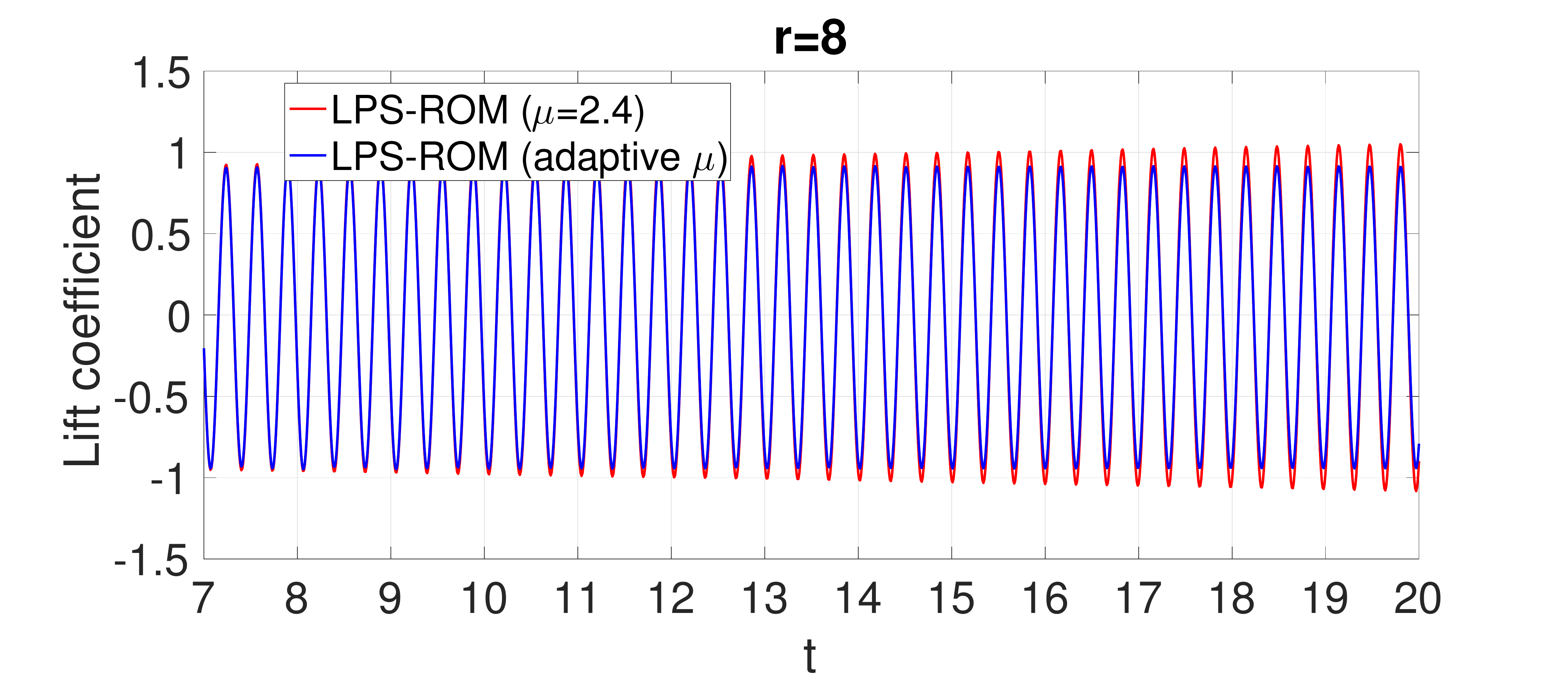}
\caption{Long time evolution of drag coefficient (left) and lift coefficient (right) 
computed with LPS-ROM \eqref{eq:pod_method1} with constant and adaptive $\mu$ 
using $r=8$ velocity-pressure modes.}\label{fig:QOIPOD1longLPS}
\end{center}
\end{figure}

\begin{figure}[htb]
\begin{center}
\includegraphics[width=2.5in]{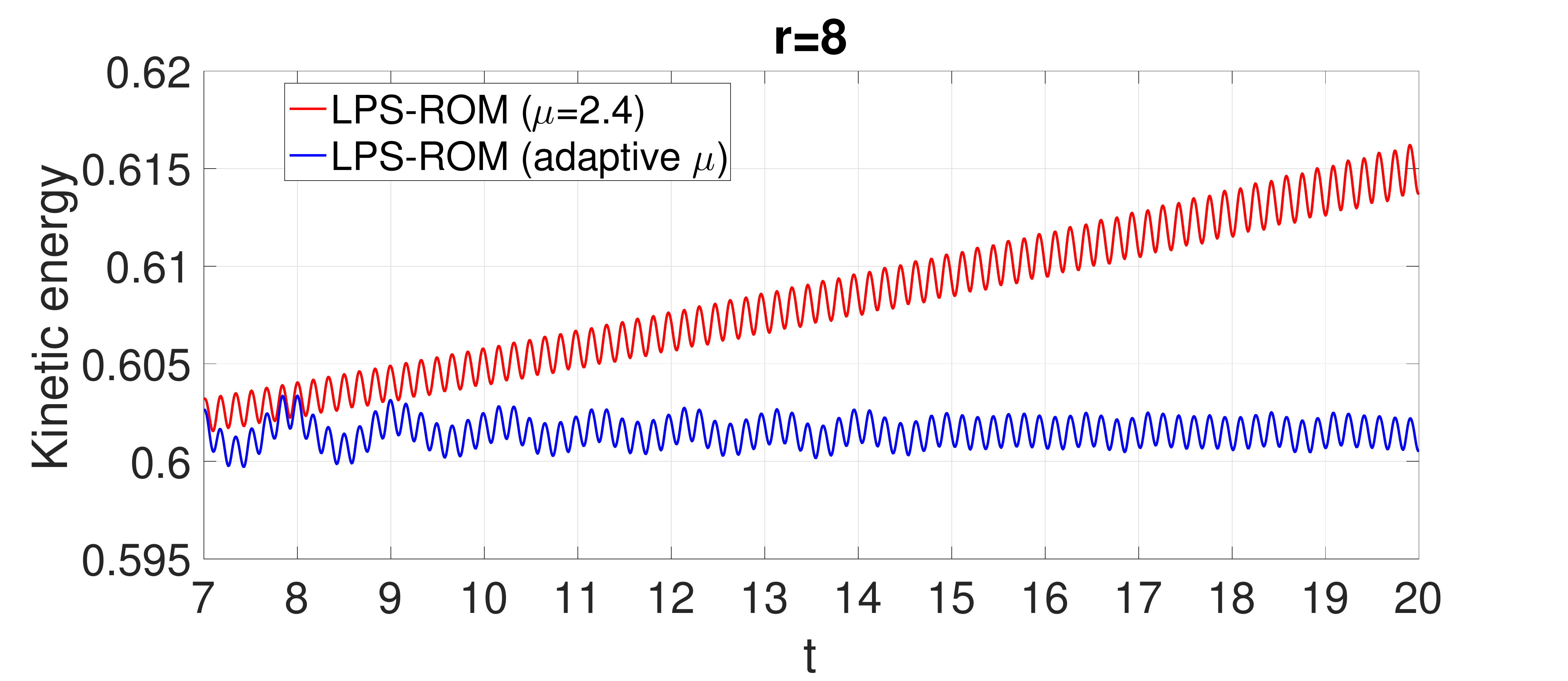}
\includegraphics[width=2.5in]{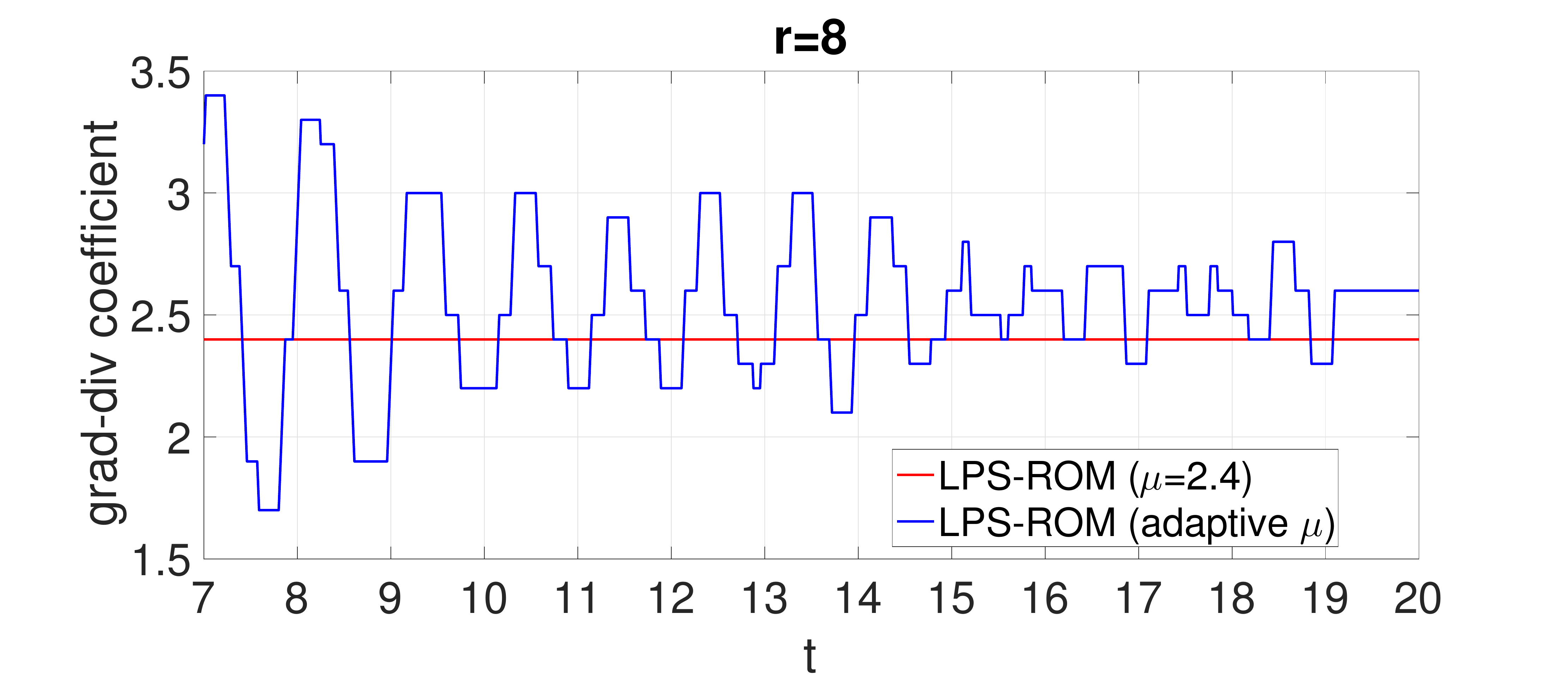}
\caption{Long time evolution of kinetic energy (left) and grad-div coefficient (right) computed with LPS-ROM \eqref{eq:pod_method1} with constant and adaptive $\mu$
using $r=8$ velocity-pressure modes.}\label{fig:QOIPOD2longLPS}
\end{center}
\end{figure}

The analogous results for the grad-div-ROM \eqref{eq:pod_method2}-\eqref{eq:pres} with constant and adaptive $\mu$ using $r = 8$ velocity-pressure (and supremizers) modes are displayed in Figures \ref{fig:QOIPOD1longSUP}-\ref{fig:QOIPOD2longSUP}. In particular, in Figure \ref{fig:QOIPOD1longSUP} we monitor the temporal evolution of the drag and lift coefficients in the predictive time interval $[7,20]\,\rm{s}$. Again, we observe that the grad-div-ROM \eqref{eq:pod_method2}-\eqref{eq:pres} with adaptive $\mu$ remains stable and bounded, and gives reliable results for long time integration, since it rightly follows the trend given in the initial time range $[5,7]\,\rm{s}$ (see Figure \ref{fig:QOIPOD1SUP}), whereas maintaining a constant $\mu$ implies the oscillation amplitude (and of consequence the error) getting larger and larger as time increases for the lift coefficient, and a totally inaccurate increasing beahvior for the drag coefficient. In Figure \ref{fig:QOIPOD2longSUP}, we show the long time evolution of kinetic energy (left) and the corresponding long time evolution of the adaptive grad-div coefficient $\mu$ (right). We observe again that the adaptive strategy has a positive effect also on the long time energy evolution, causing the adaptive grad-div-ROM \eqref{eq:pod_method2}-\eqref{eq:pres} energy to oscillate but remaining almost stable and rightly bounded over long time intervals, whereas maintaining a constant $\mu$ implies a significant inaccurate increase of the energy.

\begin{figure}[htb]
\begin{center}
\includegraphics[width=2.5in]{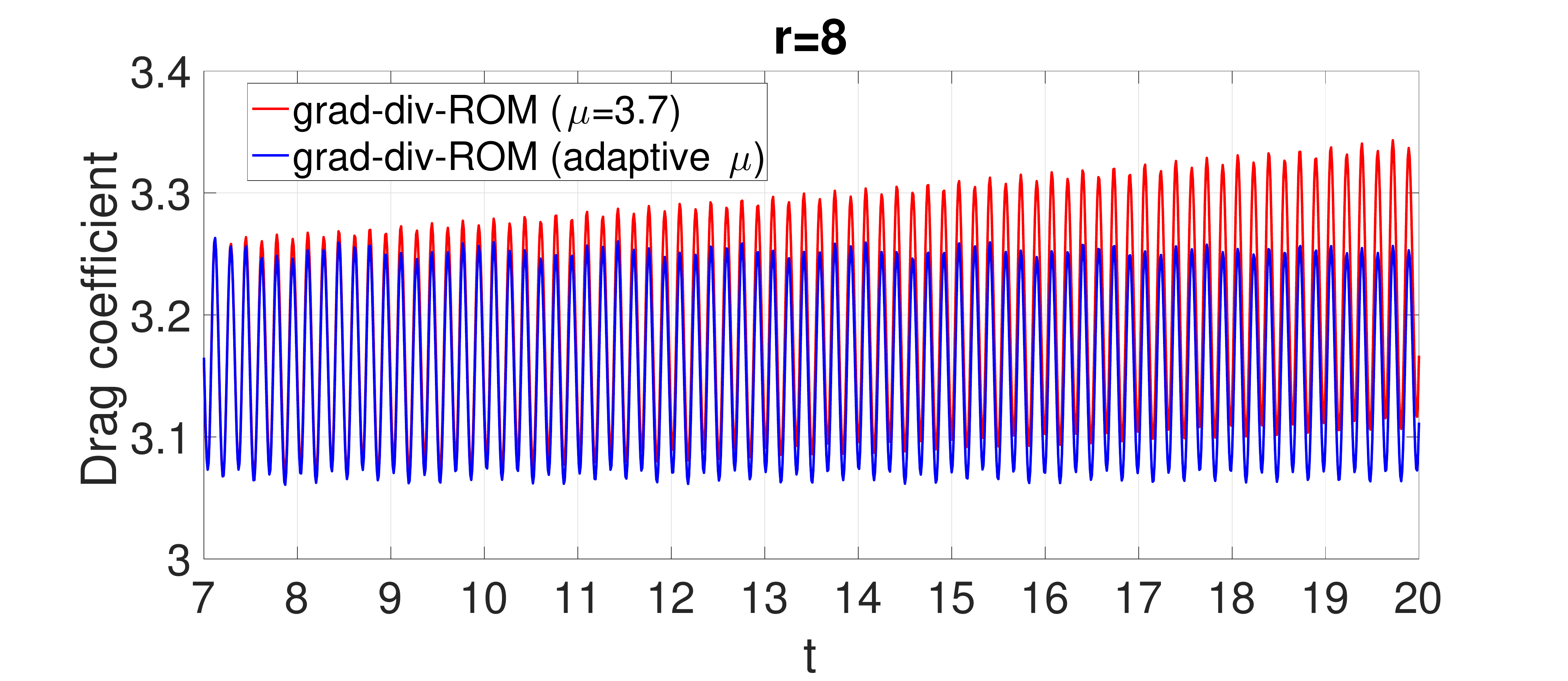}
\includegraphics[width=2.5in]{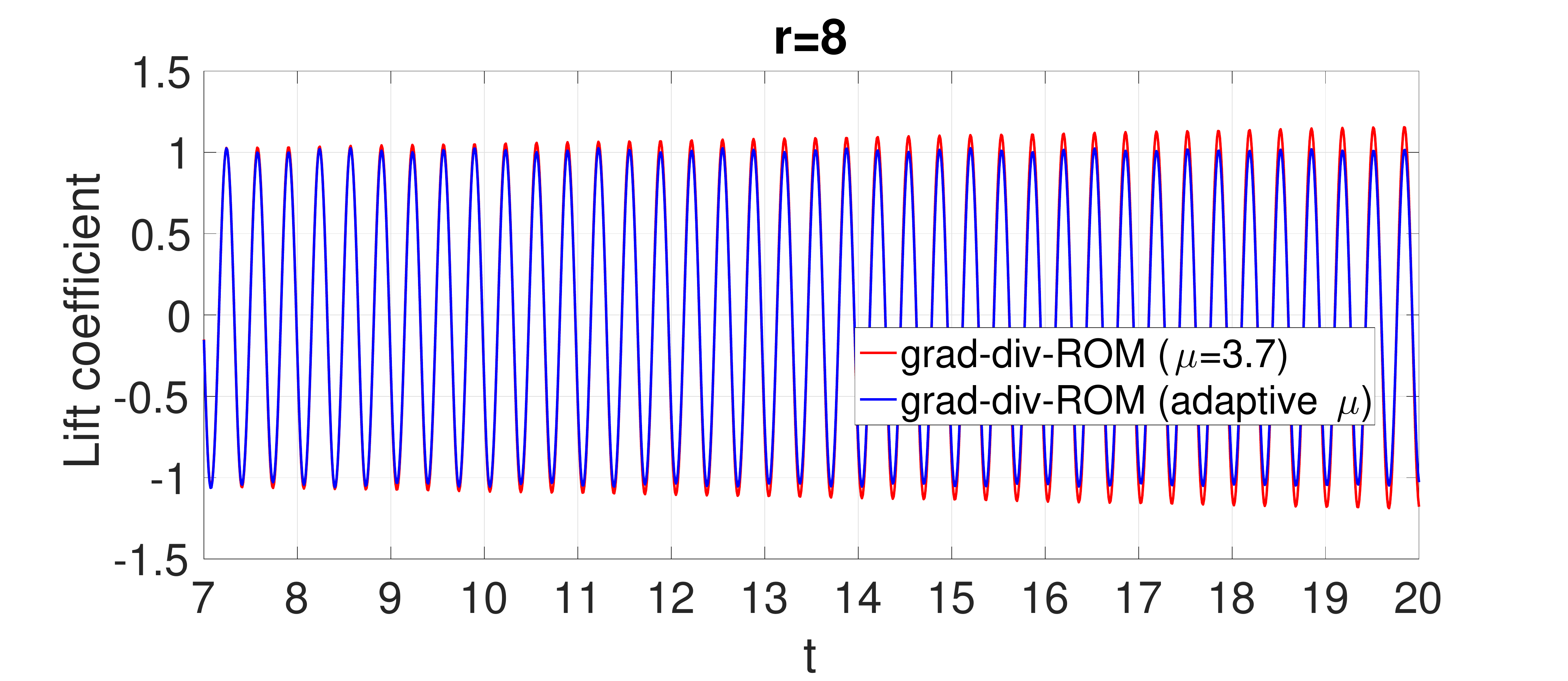}
\caption{Long time evolution of drag coefficient (left) and lift coefficient (right) 
computed with grad-div-ROM \eqref{eq:pod_method2}-\eqref{eq:pres} with constant and adaptive $\mu$ using $r=8$ velocity-pressure modes.}\label{fig:QOIPOD1longSUP}
\end{center}
\end{figure}

\begin{figure}[htb]
\begin{center}
\includegraphics[width=2.5in]{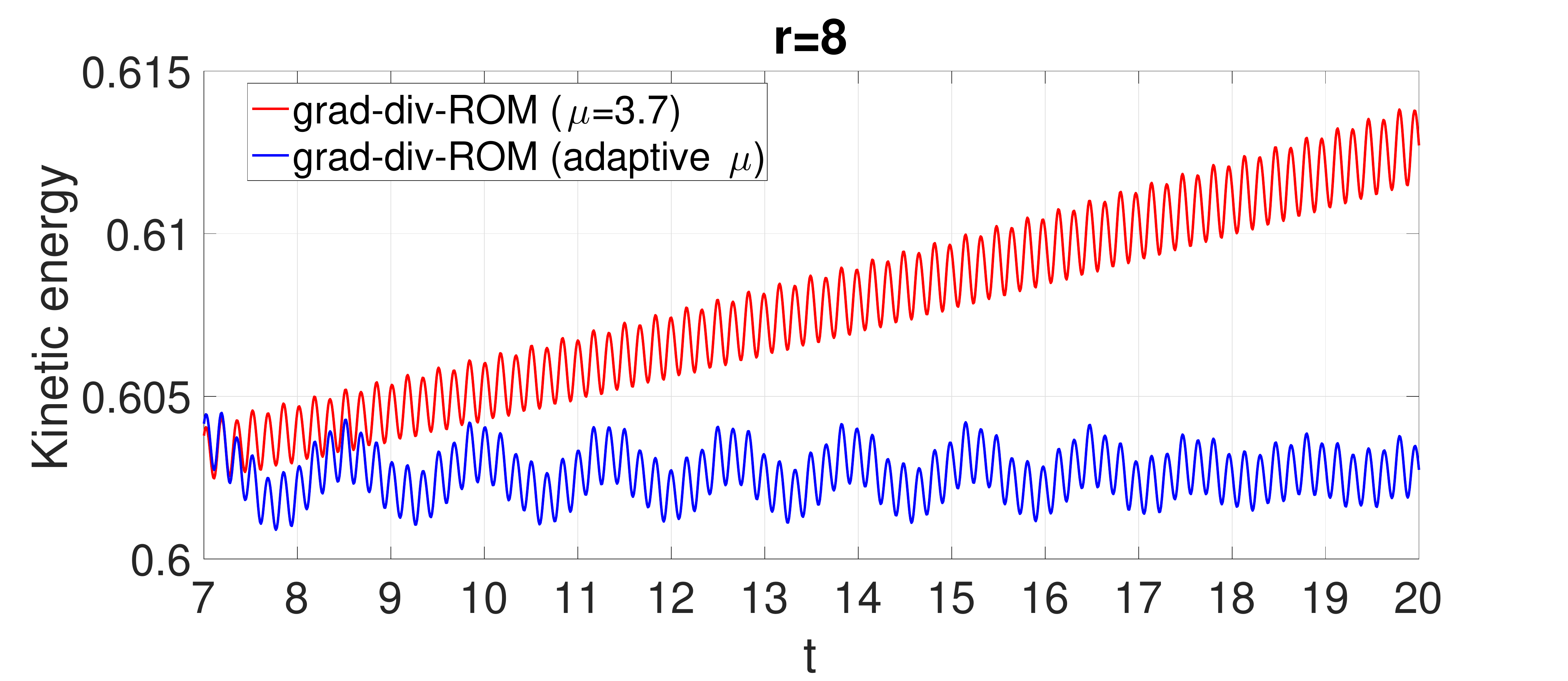}
\includegraphics[width=2.5in]{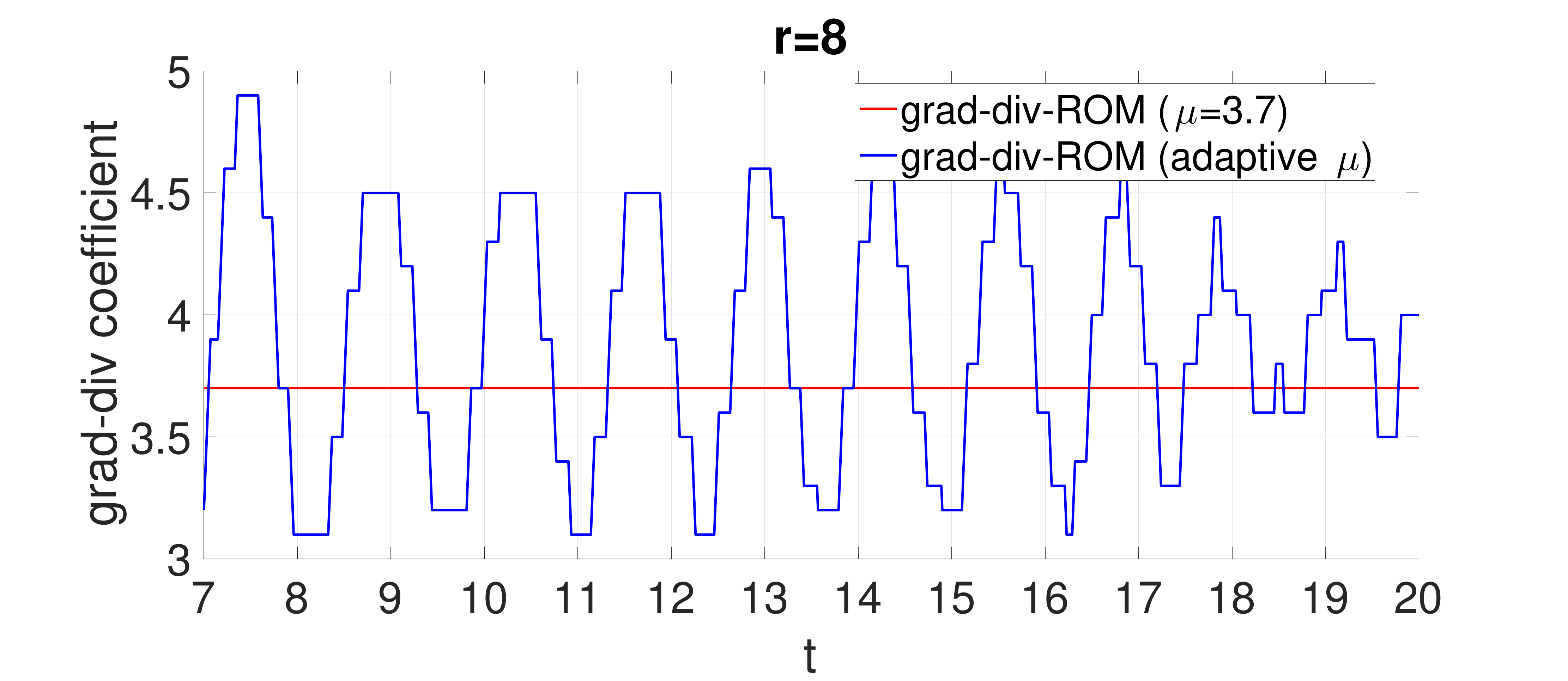}
\caption{Long time evolution of kinetic energy (left) and grad-div coefficient (right) computed with grad-div-ROM \eqref{eq:pod_method2}-\eqref{eq:pres} with constant and adaptive $\mu$ using $r=8$ velocity-pressure modes.}\label{fig:QOIPOD2longSUP}
\end{center}
\end{figure}


\section{Conclusions}\label{sec:Concl}

In this paper, POD stabilized reduced order methods for the numerical simulation of incompressible flows are proposed, analyzed and tested. In particular, we consider two approaches. In the first approach, called LPS-ROM, the standard discrete inf-sup condition is circumvented and POD modes are computed by a non inf-sup stable finite element method, called LPS-FEM \cite{nos_lps}. In the second approach, called grad-div-ROM, the standard discrete inf-sup condition is recovered through supremizer enrichment and POD modes are computed by an inf-sup stable finite element method, called grad-div-FEM \cite{NS_grad_div}. In both approaches, we build reduced order approximations for both velocity and pressure. In the first case, we consider as full order method a LPS finite element scheme with equal order interpolations, which stabilizes the gradient of both velocity and pressure. As for the corresponding reduced order POD method, we add the same kind of LPS for the gradient of both velocity and pressure than the FOM, together with online grad-div stabilization. In the second case, we consider as full order method an inf-sup stable Galerkin method with grad-div stabilization and for the corresponding reduced order POD method we also apply grad-div stabilization. In the latter case, since the velocity snapshots (and of consequence the POD velocity modes) are discretely divergence-free, the pressure can be removed from the formulation of the reduced order POD velocity, so that we use a momentum equation recovery approach to recover the online pressure from a velocity-only ROM, based on supremizer enrichment of the reduced velocity.

The main contribution of the present paper is the numerical analysis of the fully discrete LPS-ROM and grad-div-ROM applied to the unsteady incompressible Navier-Stokes equations, 
where rigorous error bounds with constants independent on inverse powers of the viscosity parameter are proved for both methods. To our knowledge, this is the first time this kind of sharp viscosity independent bounds are obtained for stabilized POD-ROM of incompressible flows. In this respect, the present study can be seen as an improvement of the numerical analysis performed in \cite{schneier} and \cite{samuele_pod}. 

Numerical experiments have been conducted to illustrate the compared performances of the proposed schemes and assess their accuracy and efficiency. Within this framework, we have also proposed an adaptive in time algorithm for the online grad-div parameter used both in the LPS-ROM and the grad-div-ROM, by adjusting dissipation arising from the online grad-div stabilization term in order to better match the FOM energy. In both cases, using a small equal number of POD velocity-pressure modes already provides comparable reliable approximations, close to the FOM results, and theoretical considerations suggested by the numerical analysis are recovered in practice. Actually, the performed analysis helped to find good a priori error indicators that, at least for small $r$ (of interest in practice), almost match the computed errors over predictive time intervals. Also, the adaptive in time algorithm for the online grad-div parameter proved to significantly improve the long time accuracy of both ROM. Although they provide similar global results, an increased accuracy can be observed for the LPS-ROM that better matches the corresponding FOM local quantities of interest such as the drag coefficient, especially. In terms of efficiency, note that the LPS-ROM circumvents the standard discrete inf-sup condition for the POD velocity-pressure spaces, whose fulfillment can be rather expensive and inefficient in realistic CFD applications \cite{Rozza15}, since it could require the application of the suprimizer enrichment strategy, used here for the grad-div-ROM, in a multi-parameter dependent setting. A cheaper and efficient alternative could be to use an approximate supremizer enrichment procedure \cite{Rozza15}, for which however it is not possible to rigorously show that the standard discrete inf-sup condition is satisfied and it is only possible to rely on heuristic criteria to check it in a post-processing stage. Finally, the velocity modes for the LPS-ROM do not have to be necessarily weakly divergence-free, which allows to use snapshots generated for instance with penalty or projection-based stabilized methods, as the LPS-FEM used in this paper. This is not the case of the grad-div-ROM considered here and, for instance, of ROM based on a pressure Poisson equation approach \cite{IliescuJohn14,schneier,RozzaStabile18} for which the velocity snapshots, and hence the POD velocity modes must be at least weakly divergence-free.

\bibliographystyle{abbrv}
\bibliography{references_sta,Biblio_STAB-POD-ROM_NSE}
\end{document}